\newenvironment{proof}{{\noindent \bf Proof.}}{\hfill$\Box$\medskip}
\newtheorem{theorem}{Theorem}[section]
\newtheorem{corollary}[theorem]{Corollary}
\newtheorem{lemma}[theorem]{Lemma}
\newtheorem{definition}[theorem]{Definition}
\newtheorem{remark}{Remark}
\def \R{\mathbb{R}}
\def \N{\mathbb{N}}
\def \A{\mathcal{A}}
\begin{document}

\title{ The dimension of irregular set in parameter space\footnotetext {* Corresponding author}
\footnotetext {2010 AMS Subject Classifications: 11K55, 28A80}}
\author{  Lixuan Zheng$^\dag$, Min Wu$^\dag$ and Bing Li$^{\dag, *}$\\
\small \it $\dag$ School of Mathematics\\
\small \it South China University of Technology\\
\small \it Guangzhou 510640, P.R. China\\
\small \it E-mails: z.lixuan@mail.scut.edu.cn, wumin@scut.edu.cn and
scbingli@scut.edu.cn}
\date{5th March,2015}
\date{}
\maketitle
\begin{center}
\begin{minipage}{120mm}
{\small {\bf Abstract.}  For any real number $\beta>1$. The $n$th cylinder of $\beta$ in the parameter space $\{\beta\in \R: \beta>1\}$ is a set of real numbers in $(1,\infty)$ having the same first $n$ digits in their $\beta$-expansion of $1$, denote by $I^P_n(\beta)$. We study the quantities which describe the growth of the length of $I^P_n(\beta)$. The Huasdorff dimension of the set of given growth rate of the length of $I^P_n(\beta)$ will be determined in this paper.}
\end{minipage}
\end{center}

\vskip0.5cm {\small{\bf Key words and phrases} beta-expansion; parameter space; Hausdorff dimension }\vskip0.5cm

\section{Introduction}

Given a real number $\beta>1.$ Let $T_{\beta}:[0,1] \rightarrow [0,1]$ be the \emph{$\beta$-transformation} which is defined by $$T_{\beta}x = \beta
x-\lfloor\beta x\rfloor,$$ where $\lfloor \beta x\rfloor$  denotes the integer part of $x$. In 1957, R\'{e}nyi \cite{R} has shown that every real number $x$ in $[0,1]$ can be written as a finite or an infinite series by the iteration of $T_{\beta}$ as follows,
\begin{equation}\label{1.1}
x=\frac{\varepsilon_1(x,\beta)}{\beta}+\cdots+\frac{\varepsilon_n(x,\beta)+T_{\beta}^n
x}{\beta^n}=\sum_{n=1}^\infty \frac{\varepsilon_n(x,\beta)}{\beta^n},
\end{equation}
where, for each $n\geq1$, $$\varepsilon_n(x,\beta)=\lfloor \beta T_{\beta}^{n-1}x
\rfloor.$$ And $\varepsilon_n(x,\beta)$ is said to be the \emph{$n$th digit} of $x$. We identify $x$
with its digit sequence $$\varepsilon(x,\beta):=(\varepsilon_1(x,\beta),\ldots,\varepsilon_n(x,\beta),\ldots)$$ and call
the digit sequence $\varepsilon(x,\beta)$ the \emph{$\beta$-expansion} of $x$.  As one of the typical example of monotone one-dimensional dynamical system, the transformation $T_{\beta}$ has drawn much attention, see \cite{B,LW,P,S}, etc.\

Recently, the $n$th cylinders of a real number in the interval $[0,1)$ and some related sets have been studied, see \cite{BW, AB}. Similarly, our work focuses on the study of cylinders of real numbers in the parameter space. Fixed $\beta>1$, we define the $n$th cylinder of $\beta$ in the parameter space as the family of $\beta'\in(1,+\infty)$ whose $\beta'$-expansion of $1$ beginning with $(\varepsilon_1(1,\beta),\ldots,\varepsilon_n(1,\beta))$, i.e.\  $$I^P_n(\beta):=\{\beta'\in(1,+\infty):\varepsilon_1(1,\beta')=\varepsilon_1(1,\beta),\ldots, \varepsilon_n(1,\beta')=\varepsilon_n(1,\beta)\}.$$ Schmeling \cite{S} showed that such cylinder in the parameter space is an interval. The length of the $n$th cylinder in the parameter space of $\beta$ is denoted as $|I^P_n(\beta)|$. Without any confusion in this paper, we denote the left endpoint of $I^P_n(\beta)$ as $\underline{\beta}_n$ and the right endpoint of $I^P_n(\beta)$ as $\overline{\beta}_n$ accordingly for every $\beta>1$.

As a matter of fact, the definition above coincides with the definition of the $n$th cylinder in the parameter space which first introduced by J. Schmeling \cite{S} for an admissible block and he gave the upper bound of the length $|I^P_n(\beta)|$, see Theorem \ref{S} for more details. After that, B. Li et al. studied the lower bound of $|I^P_n(\beta)|$ in \cite{LP}, and gave an evaluation of it, see Theorem \ref{EST}. Applying the above results, a simple fact of the left and right endpoint of  $I^P_n(\beta)$ is soon established as follows.
\begin{theorem}\label{LIMIT}
Let $\beta \in (1,+\infty)$. Then, $\underline{\beta}_n$ is increasing to the limit $\beta$ and  $\overline{\beta}_n$ is decreasing to the limit $\beta$ as n go to infinity.
\end{theorem}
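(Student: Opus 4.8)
The plan is to obtain monotonicity from the nesting of cylinders, convergence of the cylinder length to zero from Theorem~\ref{S}, and then to conclude by a squeeze argument.

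First I would record two elementary facts. Requiring a $\beta'$ to share the first $n+1$ digits of the $\beta$-expansion of $1$ is stronger than requiring it to share the first $n$, so $I^P_{n+1}(\beta)\subseteq I^P_n(\beta)$. Since each $I^P_n(\beta)$ is an interval with left endpoint $\underline{\beta}_n$ and right endpoint $\overline{\beta}_n$, this nesting gives $\underline{\beta}_n\le\underline{\beta}_{n+1}$ and $\overline{\beta}_{n+1}\le\overline{\beta}_n$ for every $n$, so $(\underline{\beta}_n)_n$ is non-decreasing and $(\overline{\beta}_n)_n$ is non-increasing. Moreover $\beta$ trivially has the same first $n$ digits as itself, so $\beta\in I^P_n(\beta)$, whence $\underline{\beta}_n\le\beta\le\overline{\beta}_n$ for all $n$. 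Being monotone and bounded, both sequences converge; write $\underline{\beta}_n\uparrow L_-\le\beta$ and $\overline{\beta}_n\downarrow L_+\ge\beta$.

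Next I would show $|I^P_n(\beta)|=\overline{\beta}_n-\underline{\beta}_n\to 0$ as $n\to\infty$. By Theorem~\ref{S} this length is dominated by an expression that decays geometrically in $n$, the base of the decay being an endpoint of $I^P_n(\beta)$. The right endpoint satisfies $\overline{\beta}_n\ge\beta>1$ for every $n$, so a factor $\overline{\beta}_n^{-n}$ tends to $0$; and a factor $\underline{\beta}_n^{-n}$ does too once one checks that $\underline{\beta}_n$ stays bounded below by a constant exceeding $1$. For the latter: if $\beta\ge 2$ then $\underline{\beta}_n\ge\underline{\beta}_1=\lfloor\beta\rfloor\ge 2$ for all $n$; and if $1<\beta<2$ the identity $\sum_{i\ge 2}\varepsilon_i(1,\beta)\beta^{-i}=1-\beta^{-1}>0$ forces $\varepsilon_m(1,\beta)\ge 1$ for some $m\ge 2$, so every $\beta'\in I^P_n(\beta)$ with $n\ge m$ has $\varepsilon_m(1,\beta')\ge 1$, which keeps $\beta'$ away from $1$ because a $\beta'$ close to $1$ has only zero digits after the first one over a long initial stretch; thus $\underline{\beta}_n\ge\underline{\beta}_m>1$ for $n\ge m$. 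Hence $|I^P_n(\beta)|\to 0$, i.e.\ $L_-=L_+$.

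Finally, from $\underline{\beta}_n\le\beta\le\overline{\beta}_n$ together with $\overline{\beta}_n-\underline{\beta}_n\to 0$ one gets $0\le\beta-\underline{\beta}_n\le\overline{\beta}_n-\underline{\beta}_n\to 0$ and $0\le\overline{\beta}_n-\beta\le\overline{\beta}_n-\underline{\beta}_n\to 0$, so $\underline{\beta}_n\to\beta$ and $\overline{\beta}_n\to\beta$; combined with the monotonicity from the previous paragraph, this is exactly the claim. The only genuinely non-trivial ingredient is $|I^P_n(\beta)|\to 0$, which is precisely what Theorem~\ref{S} provides (and Theorem~\ref{EST} sharpens); the ``bounded away from $1$'' remark is routine and, if the cited estimate is already written in terms of the fixed $\beta$ or of $\overline{\beta}_n$, can be dispensed with altogether. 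I therefore anticipate no real obstacle — the write-up should be short.
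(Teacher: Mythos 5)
Your proof is correct, but it reaches the conclusion by a genuinely different and more elementary route than the paper. You get both monotonicity statements at once from the set-theoretic nesting $I^P_{n+1}(\beta)\subseteq I^P_n(\beta)$ of the cylinders (which are intervals containing $\beta$), whereas the paper derives the monotonicity of $\underline{\beta}_n$ from Lemma~\ref{SO} (comparing the roots of the truncated polynomial equations $1=\sum_{i\le n}\varepsilon_i(1,\beta)x^{-i}$) and the monotonicity of $\overline{\beta}_n$ from the explicit description $\varepsilon(1,\overline{\beta}_n)=(\varepsilon_1(1,\beta),\ldots,\varepsilon_{\tau_n}(1,\beta)+1,0^\infty)$ of Lemma~\ref{max}, combined with the monotonicity of the recurrence time (Lemma~\ref{rec}) and of $\beta\mapsto\varepsilon^*(1,\beta)$. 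For the identification of the limit, both arguments ultimately rest on $|I^P_n(\beta)|\le\overline{\beta}_n{}^{-n+1}\to 0$ (the paper also reads off $\underline{\beta}_n\to\beta$ directly from the convergence of the series for $1$); your squeeze via $\underline{\beta}_n\le\beta\le\overline{\beta}_n$ handles both endpoints simultaneously. Your digression about keeping $\underline{\beta}_n$ bounded away from $1$ is indeed superfluous, exactly as you suspect, because the bound in Lemma~\ref{S} is stated in terms of $\overline{\beta}_n\ge\beta>1$ alone. The trade-off: your argument is shorter and needs only the interval structure of the cylinders plus the length bound, while the paper's detour through Lemmas~\ref{SO}, \ref{max} and \ref{rec} is not wasted, since those lemmas (in particular the explicit form of $\varepsilon(1,\overline{\beta}_n)$) are reused heavily in Sections~3 and~4.
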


The above theorem indicates that the growth of the length $|I^P_n(\beta)|$ as $n\rightarrow +\infty$ relies strongly on $\beta$. We introduce and study the following quantities which describe the rate of the growth of $|I^P_n(\beta)|$. For any $\beta \in (1,+\infty)$, we define the upper and lower density of $\beta$ as follows,
\begin{equation}\label{inf}
\underline{D}(\beta) = \liminf_{n\rightarrow \infty}\frac{-\log_\beta
|I^P_n(\beta)|}{n},
\end{equation}
\begin{equation}\label{sup}
\overline{D}(\beta) = \limsup_{n\rightarrow \infty}\frac{-\log_\beta
|I^P_n(\beta)|}{n}.
\end{equation}

We will prove that for all $\beta>1$, the function $\underline{D}(\beta)$ is constantly 1, while for $\overline{D}(\beta)$, we get that $\overline{D}(\beta)\leq 1+\tau(\beta)$, where $\tau(\beta)$ is a constant depending on $\beta$ and $\tau(\beta)\leq 1$, for more details see Lemma \ref{EVA}. Such result is somewhat similar to that on given rate of the growth of the length on cylinders containing $x\in[0,1)$ which is studied by A. Fan and B. Wang \cite{AB}. Thus, analogously, for every $1<\delta \leq 2$, we define the \emph{$\delta$-irregular set} in parameter space as $$F_\delta^P=\{\beta \in (1,+\infty): \overline{D}(\beta)=\delta\}.$$ It is of interest to know how large the $\delta$-irregular set is. Inspired by this, we give the Hausdorff dimension of the $\delta$-irregular set $F_\delta^P$ as the following theorem and we also can get the Lebesgue measure of $F_\delta^P$ after that.

\begin{theorem}\label{dim}
Let $1<\delta\leq 2$. We have $$\dim_{\rm H}F_\delta^P=\frac{2-\delta}{\delta}$$ where $\dim_{\rm H}$ denotes the Hausdorff dimension.
\end{theorem}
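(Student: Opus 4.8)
\emph{Outline of the proof of Theorem~\ref{dim}.} The plan has three stages: a symbolic reformulation, a covering estimate for the upper bound, and a Cantor construction plus a mass distribution argument for the lower bound.

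\emph{Stage 1 (reformulation).} For each fixed $n$ the cylinders $\{I^P_n(\beta):\beta>1\}$ are pairwise disjoint (or equal) intervals tiling $(1,\infty)$, and by Theorem~\ref{LIMIT} they decrease to $\{\beta\}$. Combining Schmeling's upper bound (Theorem~\ref{S}) with the evaluation of $|I^P_n(\beta)|$ in Theorem~\ref{EST} and with Lemma~\ref{EVA}, one extracts an estimate of the form
\[
-\log_\beta|I^P_n(\beta)|=n+D_n(\beta)+O(1),
\]
where $D_n(\beta)\ge 0$ is a combinatorial quantity attached to the prefix $(\varepsilon_1(1,\beta),\dots,\varepsilon_n(1,\beta))$ — essentially the number of subsequent digits of $\varepsilon(1,\beta)$ that this prefix already forces, equivalently the largest $\ell$ with $I^P_n(\beta)=I^P_{n+1}(\beta)=\cdots=I^P_{n+\ell}(\beta)$. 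Hence $\overline D(\beta)=1+\limsup_n D_n(\beta)/n$, so that $F^P_\delta=\{\beta>1:\ \limsup_{n\to\infty}D_n(\beta)/n=\delta-1\}$. Since $\dim_{\rm H}$ is countably stable it suffices to bound $\dim_{\rm H}(F^P_\delta\cap[a,b])$ for an arbitrary compact $[a,b]\subset(1,\infty)$, and after a routine localization the counting below may be carried out inside an arbitrarily short interval around a fixed $\beta_0$, where the number of generation-$n$ cylinders meeting a window of length $h$ is $\asymp h\,\beta_0^{\,n}$; the factors $\log\beta_0$ cancel in the final ratio, which is why the answer is independent of the interval chosen.

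\emph{Stage 2 (the key count, and the upper bound).} The crux is to show that, near $\beta_0$, for every small $\rho>0$ there are at most $\lesssim\beta_0^{(2-\delta+o(1))n}$ generation-$n$ cylinders $C$ with $|C|\le\beta_0^{-(\delta-\rho)n}$ (equivalently $D_n\gtrsim(\delta-1-o(1))n$). The point is that such a short cylinder forces its generating word to be structurally rigid: the $\beta$-shift admissibility conditions turn the inequality ``$D_n$ is of order $(\delta-1)n$'' into the presence of a long block, of length about $(\delta-1)n$, among the first $n$ digits of $\varepsilon(1,\beta)$ whose values are forced (essentially to zero), together with a self-overlap that accounts for these digits, so that only about $(2-\delta)n$ of those digits remain free. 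Carrying out this combinatorial analysis rigorously — and obtaining the exponent $2-\delta$ rather than the trivial $1$ — is, to my mind, the main difficulty of the paper. Granting the count, the upper bound is immediate: given $s>\tfrac{2-\delta}{\delta}$ pick $\rho>0$ with $s(\delta-\rho)>2-\delta$; since each $\beta\in F^P_\delta\cap[a,b]$ lies in infinitely many such short cylinders, for every $N$ they cover $F^P_\delta\cap[a,b]$, and
\[
\sum_{n\ge N}(\#\text{ short gen-}n\text{ cylinders})\cdot(\operatorname{diam})^s\ \lesssim\ \sum_{n\ge N}\beta_0^{\,((2-\delta)-s(\delta-\rho))n+o(n)}\ \longrightarrow\ 0\quad(N\to\infty),
\]
so $\mathcal H^s(F^P_\delta\cap[a,b])=0$ and $\dim_{\rm H}F^P_\delta\le\tfrac{2-\delta}{\delta}$.

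\emph{Stage 3 (the lower bound).} Here the plan is to build a Cantor set $E\subseteq F^P_\delta$. Fix a very fast-growing sequence $n_1<n_2<\cdots$ and $\rho_k\downarrow0$. Outside a sparse set of scales the digits of $\varepsilon(1,\beta)$ are taken as free as admissibility permits, contributing full local entropy $\log\beta_0$; but near each $n_{k+1}$ we implant a self-overlap together with a forced block of zero digits of relative length between $\delta-1-\rho_k$ and $\delta-1$, while arranging that no unintended longer forced block appears at any other scale. Then $\limsup_n D_n(\beta)/n=\delta-1$ for every $\beta\in E$, so $E\subseteq F^P_\delta$; meanwhile the free digits occupy a proportion $\to 2-\delta$ of $\{1,\dots,n_{k+1}\}$ while the corresponding cylinder has diameter $\asymp\beta_0^{-\delta n_{k+1}}$, so distributing unit mass uniformly along the construction gives a measure $\mu$ on $E$ with $\mu(B(x,r))\le r^{(2-\delta)/\delta-o(1)}$ for all small $r$; the mass distribution principle then yields $\dim_{\rm H}E\ge\tfrac{2-\delta}{\delta}$. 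The delicate points are (i) making the $\limsup$ equal to $\delta-1$ \emph{exactly} — the implanted scales must be tight enough to force the value up, and the generic choices elsewhere must be disciplined (by avoiding long self-overlaps) so as not to push it past $\delta-1$ — and (ii) verifying that every word occurring in the construction is genuinely a prefix of $\varepsilon(1,\beta)$ for the relevant $\beta$'s, i.e.\ that inserting the zero blocks never breaks Parry's admissibility conditions. Letting $[a,b]\uparrow(1,\infty)$ and combining the two bounds gives $\dim_{\rm H}F^P_\delta=\tfrac{2-\delta}{\delta}$; in particular $\dim_{\rm H}F^P_\delta<1$ for $\delta>1$, which forces the Lebesgue measure of $F^P_\delta$ to vanish, the fact alluded to just after the theorem.
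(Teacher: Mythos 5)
Your outline follows the same route as the paper: localize to an interval $(\beta_1,\beta_2]$, show that a cylinder of length at most $\beta^{-n\eta}$ forces a block of roughly $(\eta-1)n$ zero digits immediately after position $t_n(\beta)$, count the self-admissible words carrying such a block, and for the lower bound implant periodic self-overlaps at a sparse sequence of scales and apply the mass distribution principle (Lemma \ref{mass}). But as written the proposal has two genuine gaps, both of which you flag yourself. First, the key count is asserted, not proved. In the paper this is Lemma \ref{cardinality}: the set $\Lambda_{n,k}(\beta_1,\beta_2)$ of self-admissible words with $k$ zeros following position $t(\omega)$ injects into $\Sigma_{\beta_2}^{n-k}$ by deleting the zero block and replacing the leading $t(\omega)$ digits by zeros. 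The injectivity is not formal: one must rule out two words with $t(\omega)\neq t(\omega')$ having the same image, and the paper does this by a periodicity/recursion argument exploiting self-admissibility of both words. Without this (or a substitute), the exponent $2-\delta$ is not established and the upper bound degenerates to the trivial one. Note also that the forcing of the zero block comes from the lower estimate of Lemma \ref{EST} (the factor $\varepsilon_{t_n+1}/\overline{\beta}_n+\cdots$ must be at most $\overline{\beta}_n^{-k_n}$ up to constants), not from Theorem \ref{S}; your phrase about ``digits the prefix already forces'' is the right intuition, but it only becomes a count once that inequality is in hand, and your identification of $D_n$ with the length of the stretch where consecutive cylinders coincide is not quite the quantity the paper controls (it controls $\tau_n(\beta)-t_n(\beta)$, via Lemma \ref{EVA}).

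Second, the lower bound. The Cantor construction you describe is the paper's, but the two ``delicate points'' you list are the substance of the argument: that the implanted words remain self-admissible in Parry's sense (Theorem \ref{P1}) after repeating the prefix and inserting zero blocks bracketed by large digits $N$; and that $\overline{D}(\beta)=\delta$ exactly on the Cantor set, which requires an upper estimate on $-\log_\beta|I^P_n(\beta)|/n$ at \emph{every} $n$ within a generation (four cases, via Lemma \ref{EST}) plus a matching lower estimate along the special scales $\ell_k=2n_k+b_k+c_k$, obtained by computing $\overline{\beta}_n-\underline{\beta}_n$ explicitly from Lemma \ref{S} and Lemma \ref{max}. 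One also needs the H\"older estimate for $\mu$ on arbitrary balls rather than cylinders, which uses a separation bound for neighbouring cylinders of the construction. As a description of strategy your proposal is accurate and matches the paper; as a proof it defers precisely the steps where the work lies.
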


The above result implies that $\dim_{\rm H}F_\delta^P=\frac{2-\delta}{\delta}<1$ for all $1<\delta\leq 2$. Therefore, we immediately have the following corollary.
\begin{corollary}
Let $1<\delta\leq 2$. Then $\mathcal{L}(F_\delta^P)=0$ where $\mathcal{L}$ denote the Lebesgue measure.
\end{corollary}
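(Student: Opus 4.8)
The plan is to deduce this directly from Theorem \ref{dim}, since the corollary is an immediate consequence of the dimension formula already established. The only substantive observation needed is the elementary comparison showing that the computed Hausdorff dimension is strictly less than the ambient dimension $1$, after which a standard fact about Hausdorff dimension and Lebesgue measure finishes the argument.

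First I would invoke Theorem \ref{dim} to write $\dim_{\rm H}F_\delta^P=\frac{2-\delta}{\delta}$ for each fixed $\delta$ with $1<\delta\le 2$. Then I would rewrite this as $\frac{2-\delta}{\delta}=\frac{2}{\delta}-1$ and note that, since $\delta>1$, we have $\frac{2}{\delta}<2$, whence $\frac{2}{\delta}-1<1$. Thus $\dim_{\rm H}F_\delta^P<1$ for every admissible $\delta$; one may also observe that the expression decreases monotonically from the value $1$ at $\delta=1$ to $0$ at $\delta=2$, confirming that strict inequality holds throughout the half-open range $1<\delta\le 2$.

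To conclude, I would apply the classical relationship between Hausdorff dimension and Lebesgue measure: for any subset $E\subseteq\R$, if $\dim_{\rm H}E<1$ then the one-dimensional Hausdorff measure $\mathcal{H}^1(E)=0$, and since $\mathcal{H}^1$ coincides (up to a normalizing constant) with the one-dimensional Lebesgue measure $\mathcal{L}$ on $\R$, it follows that $\mathcal{L}(E)=0$. Taking $E=F_\delta^P\subseteq(1,+\infty)\subseteq\R$ yields $\mathcal{L}(F_\delta^P)=0$, as required.

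There is no genuine obstacle here: the corollary is purely a formal consequence of the already-proven dimension formula together with the standard monotonicity property of Hausdorff measures. The only point requiring any care is ensuring the strict inequality $\frac{2-\delta}{\delta}<1$ over the entire range $1<\delta\le 2$, which I would verify by the direct algebraic manipulation indicated above rather than by any further analysis of the set $F_\delta^P$ itself.
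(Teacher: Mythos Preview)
Your proposal is correct and matches the paper's own approach: the paper simply notes that Theorem~\ref{dim} gives $\dim_{\rm H}F_\delta^P=\frac{2-\delta}{\delta}<1$ for all $1<\delta\le 2$ and concludes the corollary immediately from this. Your algebraic verification of the strict inequality and appeal to the standard fact that sets of Hausdorff dimension below $1$ have zero Lebesgue measure are exactly what is needed.
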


We finish this introduction by illustrating the organization of this paper. In the next section, we will devote to reviewing some of standard fact on the properties of $\beta$-expansion. In Section 3, we will estimate the lengths of $n$th cylinders of a fixed $\beta>1$ in the parameter space $\{\beta\in \R: \beta>1\}$. We are going to give the estimation of lower and upper density of $\beta$ in this section as well. In this section, the $n$th recurrence time of $\beta$ will play an important role. Finally, we will prove Theorem \ref{dim} in Section 4. For more details about cylinders in parameter space, we refer the reader to \cite{LWX,PS}. For more dimensional results concerning the irregular sets, see \cite{AB,BWJ,TD} and references therein.

\section{Preliminaries}

In this section, we will give some basic facts of $\beta$-expansions and fix some notations. For more properties of
$\beta$-expansions see the paper of Parry \cite{P}, R\'{e}nyi \cite{R} and the references therein.

Recall the $\beta$-expansion of $1$ is given by $$1=\frac{\varepsilon_1(1,\beta)}{\beta}+\cdots+\frac{\varepsilon_n(1,\beta)}{\beta^n}+\cdots.$$ where $\varepsilon_n(1,\beta)=\lfloor \beta T_{\beta}^{n-1}1 \rfloor$. If above series is finite,  i.e.,\ $\varepsilon(1,\beta)$ ends with $0^{\infty}=00\cdots,$ then $\beta$ is called \emph{a simple Parry number}. In this case, define $$\varepsilon^\ast(1,\beta):=(\varepsilon^\ast_1(1,\beta),\varepsilon^\ast_2(1,\beta),\ldots)=(\varepsilon_1(1,\beta),
\ldots,\varepsilon_{m-1}(1,\beta),\varepsilon_m(1,\beta)-1)^{\infty},$$ where $\varepsilon_m(1,\beta)$ is the last digit in $\varepsilon(1,\beta)$  that is not equal to $0$ and $\omega^\infty$ denotes the periodic sequence $(\omega,\omega,\omega,\ldots)$. Otherwise, then $$\varepsilon^\ast(1,\beta):=(\varepsilon^\ast_1(1,\beta),\varepsilon^\ast_2(1,\beta),\ldots)=(\varepsilon_1(1,\beta),
\varepsilon_2(1,\beta),\ldots).$$ In both case, the sequence $\varepsilon^\ast(1,\beta)$ is said to be \emph{the infinite $\beta$-expansion of $1$} and we always have that $$1=\frac{\varepsilon^\ast_1(1,\beta)}{\beta}+\cdots+\frac{\varepsilon^\ast_n(1,\beta)}{\beta^n}+\cdots.$$

From the definition of $T_\beta$, it is obvious that, for an integer $n \geq 1$ and $x \in [0,1]$, the $n$th digit $\varepsilon_n(x,\beta)$ of $x$ belongs to the alphabet $\A=\{0,\ldots,\lfloor \beta \rfloor-1\}$ when $\beta$ is an integer and $\A=\{0,\ldots,\lfloor \beta \rfloor\}$ otherwise. What we should note here is that not all sequences $\omega \in \A^\N$ are the $\beta$-expansion of $x$. This leads to the notation of \emph{$\beta$-admissible sequence}.

A word $(\varepsilon_1,\ldots,\varepsilon_n)$ is called \emph{admissible} with respect to the base $\beta$ if there
exists an $x \in [0, 1)$ such that the $\beta$-expansion of $x$ satisfies $\varepsilon_1(x,\beta)=\varepsilon_1,\ldots,\varepsilon_n(x,\beta)= \varepsilon_n$.

We write $\Sigma_\beta^n$ as the collection of all $\beta$-admissible sequences
with length $n$, i.e.,\ $$\Sigma_\beta^n=\{(\varepsilon_1, \varepsilon_2,\ldots,\varepsilon_n)\in \A^n:
\exists\ x \in (0,1),\ {\rm such\ that\ }\varepsilon_j(x,\beta)=\varepsilon_j, \forall\ 1
\leq j \leq n\}.$$

Now we give the definition of \emph{lexicographical order $<_{\rm{lex}}$} in the symbolic space $\A^\N$ as
follows: $$(\varepsilon_1, \varepsilon_2,\ldots)<_{\rm{lex}}(\varepsilon'_1, \varepsilon'_2,\ldots)$$if there exists an integer $k \geq 1$ such that, for all $1 \leq j < k$, $\varepsilon_j=\varepsilon'_j$ but $\varepsilon_k<\varepsilon'_k$. The symbol $\leq_{\rm{lex}}$ means $=$ or $<_{\rm{lex}}$. Moreover, we extend the lexicographical order to words: let $m,n\geq 1$, for two words $\omega=(\omega_1,\ldots,\omega_n)$, $\omega'=(\omega'_1,\ldots,\omega'_m)$ of nonnegative integers,
$$\omega <_{\rm{lex}} \omega' \Longleftrightarrow (\omega_1,\ldots,\omega_n,0^\infty)<_{\rm{lex}}(\omega'_1,\ldots,\omega'_m,0^\infty).$$

The characterization and properties of the admissibility of a sequence which relies heavily on the infinite $\beta$-expansion of $1$ are given by Parry \cite{P} as the following theorem.
\begin{theorem}[Parry \cite{P}]\label{P1}
Given $\beta > 1$, the followings hold:

(1) For every $n\geq 1$,$$\omega=(\omega_1,\ldots,\omega_n)\in \Sigma_\beta^n \Longleftrightarrow \sigma ^i\omega \leq_{\rm{lex}} (\varepsilon_1^\ast(1,\beta),\ldots,\varepsilon_{n-i}^\ast(1,\beta))\ for\ all\ i \geq 1,$$ where $\sigma$ is the shift operator such that $\sigma\omega=(\omega_2,\omega_3,\ldots).$

(2) For every $i\geq 1$, $\sigma ^i\varepsilon^\ast(1,\beta) \leq_{\rm{lex}} \varepsilon^\ast(1,\beta).$

(3) The function $\beta \mapsto \varepsilon^\ast(1,\beta)$ is strictly increasing with respect to $\beta > 1$. Therefore, if $1<\beta_1<\beta_2$, for any $n \geq 1$, we have $$\Sigma_{\beta_1}^n \subset \Sigma_{\beta_2}^n.$$
\end{theorem}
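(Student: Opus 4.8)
The plan is to derive all three parts from two foundational facts about a fixed base $\beta$. The first is that the coding map $x\mapsto\varepsilon(x,\beta)$ is strictly increasing from $\big([0,1),<\big)$ into $\big(\A^{\N},<_{\rm{lex}}\big)$: if $x<y$ and their expansions first differ at index $k$, the shared first $k-1$ digits place $T_\beta^{k-1}x$ and $T_\beta^{k-1}y$ on one monotone branch of $T_\beta^{k-1}$, so $T_\beta^{k-1}x<T_\beta^{k-1}y$ and thus $\varepsilon_k(x,\beta)=\lfloor\beta T_\beta^{k-1}x\rfloor\le\lfloor\beta T_\beta^{k-1}y\rfloor=\varepsilon_k(y,\beta)$; as the two differ, $\varepsilon_k(x,\beta)<\varepsilon_k(y,\beta)$ and hence $\varepsilon(x,\beta)<_{\rm{lex}}\varepsilon(y,\beta)$. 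The second is the identification $\varepsilon^\ast(1,\beta)=\sup_{x\in[0,1)}\varepsilon(x,\beta)$ (supremum in $<_{\rm{lex}}$), which I would obtain by taking $t\uparrow 1$ in the greedy algorithm and comparing the limit with the two defining cases of $\varepsilon^\ast(1,\beta)$, handling the simple Parry case (terminating expansion made periodic after lowering its last nonzero digit) separately. Together these give the key sub-lemma used throughout: for every $x\in[0,1)$ and every $m\ge1$, $$(\varepsilon_1(x,\beta),\ldots,\varepsilon_m(x,\beta))\leq_{\rm{lex}}(\varepsilon_1^\ast(1,\beta),\ldots,\varepsilon_m^\ast(1,\beta)).$$

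For part (1) I would argue both directions with the sub-lemma. If $\omega=(\omega_1,\ldots,\omega_n)\in\Sigma_\beta^n$, choose $x\in[0,1)$ realizing these digits; then for each shift $i$ (including $i=0$, the word itself) the point $T_\beta^i x$ lies in $[0,1)$ and its expansion is $\sigma^i\varepsilon(x,\beta)$, whose first $n-i$ digits are $(\omega_{i+1},\ldots,\omega_n)$, so the sub-lemma applied to $T_\beta^i x$ yields $\sigma^i\omega\leq_{\rm{lex}}(\varepsilon_1^\ast(1,\beta),\ldots,\varepsilon_{n-i}^\ast(1,\beta))$. Conversely, given a word satisfying the lexicographic inequalities for all shifts, I would set $x=\sum_{k=1}^n\omega_k\beta^{-k}$ and run the greedy algorithm, using the shift inequalities to keep each remainder $T_\beta^{j-1}x$ in $[0,1)$ and to force $\lfloor\beta T_\beta^{j-1}x\rfloor=\omega_j$; equivalently, since the greedy expansion is the $<_{\rm{lex}}$-largest bounded-digit representation of its value, the inequalities pin $\omega$ down as an initial block of a genuine expansion, so $\omega\in\Sigma_\beta^n$.

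For part (2), when $\beta$ is not a simple Parry number one has $\varepsilon^\ast(1,\beta)=\varepsilon(1,\beta)$ and $\sigma^i\varepsilon^\ast(1,\beta)=\varepsilon(T_\beta^i 1,\beta)$ with $T_\beta^i 1\in[0,1)$ for every $i\ge1$, so the sub-lemma immediately gives $\sigma^i\varepsilon^\ast(1,\beta)\leq_{\rm{lex}}\varepsilon^\ast(1,\beta)$; the simple Parry case follows by the supremum identity, letting $t\uparrow 1$ and passing shift-domination to the limit. For part (3), fix $1<\beta_1<\beta_2$ and put $a=\varepsilon^\ast(1,\beta_1)$. Since $\sum_k a_k\beta_1^{-k}=1$ and $a$ has infinitely many nonzero digits, enlarging the base strictly shrinks every positive term, so $y:=\sum_k a_k\beta_2^{-k}<1$. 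As $a$ is a base-$\beta_2$ digit string (its digits are at most $\lfloor\beta_1\rfloor\le\lfloor\beta_2\rfloor$) representing $y$, the maximality of the greedy expansion gives $a\leq_{\rm{lex}}\varepsilon(y,\beta_2)$, and the sub-lemma gives $\varepsilon(y,\beta_2)\leq_{\rm{lex}}\varepsilon^\ast(1,\beta_2)$; equality $a=\varepsilon^\ast(1,\beta_2)$ is impossible, since it would force $y=\sum_k a_k\beta_2^{-k}=1$. Hence $\varepsilon^\ast(1,\beta_1)<_{\rm{lex}}\varepsilon^\ast(1,\beta_2)$, which makes every length-$m$ prefix of $\varepsilon^\ast(1,\beta_1)$ lexicographically at most the corresponding prefix of $\varepsilon^\ast(1,\beta_2)$; feeding this into the characterization of part (1) shows that every inequality witnessing $\beta_1$-admissibility still holds in base $\beta_2$, so $\Sigma_{\beta_1}^n\subset\Sigma_{\beta_2}^n$.

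The delicate points are the two foundational facts, and I expect the main obstacle to be reconciling lexicographic order with numerical value. For arbitrary digit strings the value $\sum_k w_k\beta^{-k}$ and the order $<_{\rm{lex}}$ need not agree, because carries can reverse them; they coincide precisely on genuine $\beta$-expansions, and it is exactly this coincidence that powers the backward direction of part (1) and the maximality step in part (3). The most careful bookkeeping lives in the simple Parry / terminating-expansion boundary, where the clean ``orbit stays in $[0,1)$'' argument for part (2) must be replaced by the limiting argument through the supremum identity; checking that this limit simultaneously preserves the self-domination of part (2) and the strictness in part (3) is the crux of the proof.
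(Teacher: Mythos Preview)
The paper does not give its own proof of this statement: Theorem~\ref{P1} is quoted verbatim as a classical result of Parry and is attributed via the citation \cite{P}, with no argument supplied. There is therefore nothing in the paper against which to compare your proposal.

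That said, your outline follows the standard route to Parry's theorem (monotonicity of the coding map, identification of $\varepsilon^\ast(1,\beta)$ as the lexicographic supremum of all $\beta$-expansions, and then reading off parts (1)--(3) from this). The delicate points you flag---the converse direction of (1), the simple Parry boundary case in (2), and the strictness in (3)---are indeed where the work lies, and your sketch handles them along the usual lines. If you want a fully detailed treatment you should consult Parry's original paper or a standard reference on $\beta$-expansions, but nothing in your proposal is at odds with the known proof.
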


From the above result, we can see that the $\beta$-expansion of the unit $1$ serves as an important role in $\beta$-expansion. Consequently, we now give a characterization of $\beta$-expansion of the unit $1$ for some $\beta$ which was introduced by Parry \cite{P}. Before doing this, let us first give the definition of a \emph{self-admissible word} and \emph{self-admissible sequence}.
\begin{definition}
A word $\omega=(\omega_1, \ldots,\omega_n)$ is said to be self-admissible if for every $1\leq i<n$,$$ \sigma ^i\omega \leq_{\rm{lex}} (\omega_1,\ldots,\omega_{n-i}). $$
An infinite digit sequence $\omega=(\omega_1, \omega_2,\ldots)$ is called self-admissible if $ \sigma ^i\omega \leq_{\rm{lex}}\omega$ for each $i\geq 1$.
\end{definition}

For the sake of convenience, we denote $\Lambda_n$ as the collection of all self-admissible words with length $n$, i.e.,\ $$\Lambda_n=\{\omega=(\omega_1, \omega_2,\ldots,\omega_n):\ {\rm for\ every}\ 1\leq i<n,\ \sigma ^i\omega \leq_{\rm{lex}} (\omega_1,\ldots,\omega_{n-i}) \}.$$

Now we give the characterization of the $\beta$-expansion of $1$ as follows.
\begin{theorem}[Parry \cite{P}]\label{P2}
A finite word or an infinite sequence $(\varepsilon_1, \varepsilon_2,\ldots,\varepsilon_n,\ldots)$ is the expansion of $1$ for some $\beta > 1$ if and only if it is self-admissible.
\end{theorem}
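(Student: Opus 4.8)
The plan is to prove both implications, leaning on Parry's results recorded in Theorem \ref{P1}. Necessity (an expansion of $1$ is self-admissible) is essentially contained in Theorem \ref{P1}(2); the substance lies in sufficiency, where, starting only from the combinatorial self-admissibility condition, one must manufacture a base $\beta>1$ and verify that the orbit of $1$ under $T_\beta$ regenerates the given sequence. I expect the sufficiency direction, and within it the conversion of the lexicographic inequality $\sigma^n\omega\leq_{\rm{lex}}\omega$ into the numerical inequality controlling that orbit, to be the main obstacle.

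For necessity, suppose $(\varepsilon_1,\varepsilon_2,\ldots)$ is the expansion of $1$ for some $\beta>1$. If the expansion is infinite it coincides with $\varepsilon^*(1,\beta)$, and then $\sigma^i\varepsilon^*(1,\beta)\leq_{\rm{lex}}\varepsilon^*(1,\beta)$ for all $i\ge1$ by Theorem \ref{P1}(2), which is exactly self-admissibility of the infinite sequence. If instead $\beta$ is a simple Parry number and the word is $(\varepsilon_1,\ldots,\varepsilon_m)$ with $\varepsilon_m\ge1$, I would pass to $v:=(\varepsilon_1,\ldots,\varepsilon_{m-1},\varepsilon_m-1)^\infty=\varepsilon^*(1,\beta)$, self-admissible by the same part of Theorem \ref{P1}, and then read off the finite self-admissibility $\sigma^i(\varepsilon_1,\ldots,\varepsilon_m)\leq_{\rm{lex}}(\varepsilon_1,\ldots,\varepsilon_{m-i})$ for $1\le i<m$ by a direct comparison of $\sigma^i v$ with $v$. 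The only point needing care is the position carrying the lowered last digit $\varepsilon_m-1$, which must be tracked so that a strict excess of $(\varepsilon_{i+1},\ldots,\varepsilon_m)$ over the prefix $(\varepsilon_1,\ldots,\varepsilon_{m-i})$ would contradict $\sigma^i v\leq_{\rm{lex}}v$.

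For sufficiency, let $\omega=(\omega_1,\omega_2,\ldots)$ be self-admissible and not identically $0$. From $\sigma^{n-1}\omega\leq_{\rm{lex}}\omega$ one first reads off $\omega_n\le\omega_1$ for every $n$, so the digits are bounded and $\omega_1\ge1$. I would then define $\beta$ as the root in $(1,\infty)$ of
$$g(t):=\sum_{n=1}^{\infty}\frac{\omega_n}{t^{n}}=1,$$
noting that $g$ converges, is continuous and strictly decreasing on $(1,\infty)$ with $g(t)\le\omega_1/(t-1)\to0$ as $t\to\infty$ and $\lim_{t\to1^+}g(t)\ge\omega_1\ge1$, so that $\beta$ exists and is unique (for a self-admissible finite word $(\omega_1,\ldots,\omega_n)$ with $\omega_n\ge1$ I would instead take the root of $\sum_{k=1}^{n}\omega_k t^{-k}=1$ and treat it through its periodic completion $(\omega_1,\ldots,\omega_{n-1},\omega_n-1)^\infty$). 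Writing $r_n:=\sum_{k\ge1}\omega_{n+k}\beta^{-k}$ with $r_0=1$, the identity $\beta r_n=\omega_{n+1}+r_{n+1}$ shows that, knowing $r_m\in[0,1)$ for every $m\ge1$, one gets $\varepsilon_{n+1}(1,\beta)=\lfloor\beta r_n\rfloor=\omega_{n+1}$ and $T_\beta^{\,n+1}1=r_{n+1}$ for each $n\ge0$, so that $\omega$ is recovered as the expansion of $1$ by induction.

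Everything therefore reduces to the estimate $r_n\in[0,1)$, which is where self-admissibility must be exploited and where I expect the real difficulty. Nonnegativity is clear; for $r_n\le1$ I would use that $\sigma^n\omega\leq_{\rm{lex}}\omega$ first differs from $\omega$ at some index $j$ with $\omega_j-\omega_{n+j}\ge1$, whence
$$1-r_n=\sum_{k\ge1}(\omega_k-\omega_{n+k})\beta^{-k}\ge\beta^{-j}\bigl(1-r_{n+j}\bigr).$$
Since the digit bound makes $(r_m)_m$ uniformly bounded, assuming some $r_m>1$ and iterating this inequality produces indices along which $r-1$ grows by at least a factor $\beta$ at each step, contradicting boundedness; hence $r_m\le1$ for all $m$. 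To upgrade to the strict bound $r_n<1$ for $n\ge1$ I would analyze the equality case: $r_n=1$ forces every inequality above to be an equality, driving $\omega_k=0$ for all $k>j$ and so making $\omega$ eventually zero, which is precisely the finite-word situation handled through the periodic completion, where the admissible equalities $\sigma^n\omega=\omega$ match the terminating expansion. The crux, and the most delicate point, is exactly this passage from lexicographic order to numerical order, together with the bookkeeping that keeps the two conventions—the greedy finite expansion $\varepsilon(1,\beta)$ and the infinite expansion $\varepsilon^*(1,\beta)$—consistent.
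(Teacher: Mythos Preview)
The paper does not prove Theorem \ref{P2}; it is quoted as a result of Parry \cite{P} and used as background, so there is no in-paper argument to compare your proposal against. Your sketch is essentially the classical Parry proof: define $\beta$ as the root of $\sum\omega_n t^{-n}=1$, set $r_n=\sum_{k\ge1}\omega_{n+k}\beta^{-k}$, and use the lexicographic hypothesis to force $r_n\in[0,1)$ so that the greedy algorithm reproduces $\omega$. The key telescoping inequality $1-r_n\ge\beta^{-j}(1-r_{n+j})$ you isolate is indeed the heart of the matter, and your growth-contradiction for $r_m>1$ is correct.

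One point deserves more care. If the infinite sequence $\omega$ is purely periodic (so $\sigma^p\omega=\omega$ for some $p\ge1$), then $r_p=1$ exactly, the greedy digit at step $p$ becomes $\omega_p+1$ rather than $\omega_p$, and the $\beta$ you construct has the \emph{finite} expansion $\varepsilon(1,\beta)=(\omega_1,\ldots,\omega_{p-1},\omega_p+1,0^\infty)$ while $\omega$ itself is $\varepsilon^*(1,\beta)$. Your phrase ``the admissible equalities $\sigma^n\omega=\omega$ match the terminating expansion'' gestures at this, but the dichotomy should be stated cleanly: either $\omega$ is never periodic, in which case your argument gives $r_n<1$ for all $n\ge1$ and $\omega=\varepsilon(1,\beta)$; or $\omega$ is purely periodic, and then it is realized as $\varepsilon^*(1,\beta)$ for a simple Parry number (so still ``an expansion of $1$'' in the sense intended). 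A second minor edge case is $\omega=(1,0^\infty)$, where your equation has root $\beta=1$; this degenerate word is usually excluded by convention.
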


The following result given by R\'{e}nyi shows that dynamical system $([0,1],\ T_\beta)$ admits $\log \beta$ as its topological entropy. The symbol $\sharp$ represents the number of elements of a finite set in the rest of this paper.
\begin{theorem}[R\'{e}nyi \cite{R}]\label{cardinity}
Given $\beta >1$. For any $n \geq 1$,$$\beta^n \leq \sharp \Sigma_\beta^n \leq
\beta^{n+1}/(\beta-1).$$
\end{theorem}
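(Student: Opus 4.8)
The plan is to count admissible words of length $n$ by relating them to the points of $[0,1)$ and to the partition of $[0,1)$ into fundamental intervals (cylinders) determined by the first $n$ digits. First I would recall that for a fixed $\beta>1$, each $x\in[0,1)$ has a well-defined digit sequence $\varepsilon(x,\beta)$, and that the map $x\mapsto(\varepsilon_1(x,\beta),\ldots,\varepsilon_n(x,\beta))$ partitions $[0,1)$ into finitely many nonempty pieces; by definition of $\Sigma_\beta^n$, the number of such pieces is exactly $\sharp\Sigma_\beta^n$. Each such piece is (the trace on $[0,1)$ of) an interval on which $T_\beta^n$ acts as the affine map $y\mapsto \beta^n y-c$ for the appropriate integer $c=\sum_{j=1}^n \varepsilon_j\beta^{n-j}$, so each nonempty cylinder of rank $n$ has length at most $\beta^{-n}$, with equality precisely for the "full" cylinders on which $T_\beta^n$ is a bijection onto $[0,1)$.

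For the lower bound $\beta^n\le\sharp\Sigma_\beta^n$: since the rank-$n$ cylinders cover $[0,1)$ and each has length $\le\beta^{-n}$, we need at least $\beta^n$ of them, i.e. $\sharp\Sigma_\beta^n\ge\beta^n$. For the upper bound $\sharp\Sigma_\beta^n\le\beta^{n+1}/(\beta-1)$: here I would argue by induction on $n$, using the recursive structure of admissibility from Theorem \ref{P1}. The key observation is that appending a digit to an admissible word of length $n$ keeps it admissible unless the word agrees with the prefix $(\varepsilon_1^\ast(1,\beta),\ldots,\varepsilon_n^\ast(1,\beta))$ of the infinite expansion of $1$, in which case only digits up to $\varepsilon_{n+1}^\ast(1,\beta)$ may be appended, while all other admissible words of length $n$ can be followed by any of the $\lfloor\beta\rfloor+1$ (or $\lfloor\beta\rfloor$) symbols that are $\le\varepsilon_1^\ast(1,\beta)$ in the first coordinate. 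Quantifying this gives $\sharp\Sigma_\beta^{n+1}\le(\varepsilon_1^\ast(1,\beta)+1)\,\sharp\Sigma_\beta^n+(\text{correction})$, and since $\varepsilon_1^\ast(1,\beta)+1\le\beta+1$ one gets a geometric-type estimate; summing the resulting geometric series yields the bound $\beta^{n+1}/(\beta-1)$. Alternatively, and more cleanly, one can observe that distinct rank-$n$ cylinders have disjoint interiors, each of length $\ge(\beta-1)\beta^{-(n+1)}$ is \emph{false} in general, so the induction route is the safer one.

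The main obstacle is making the upper bound argument airtight: unlike the lower bound, which follows immediately from a volume (length) count, the upper bound requires controlling how many admissible continuations an admissible prefix admits, and this depends delicately on whether the prefix "sees" the expansion of $1$. I expect the cleanest path is to set $a_n=\sharp\Sigma_\beta^n$ and establish $a_{n+1}\le \beta a_n + 1$ (or a similar linear recursion), whence $a_n\le\beta^n a_0+(\beta^{n-1}+\cdots+1)=\beta^n+\frac{\beta^n-1}{\beta-1}\le\beta^{n+1}/(\beta-1)$ after bounding; verifying the recursion $a_{n+1}\le\beta a_n+1$ from Parry's admissibility criterion is the heart of the matter, and I would handle the simple-Parry and non-simple-Parry cases uniformly by working with the infinite expansion $\varepsilon^\ast(1,\beta)$ throughout.
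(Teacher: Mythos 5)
The paper does not actually prove this statement: it is quoted from R\'enyi with a citation only, so there is no in-paper argument to compare against. Your lower bound is correct and complete as sketched: the rank-$n$ cylinders in $[0,1)$ are pairwise disjoint, cover $[0,1)$, and each has length at most $\beta^{-n}$ because $T_\beta^n$ is affine with slope $\beta^n$ on each of them and maps it into $[0,1)$; hence $\sharp\Sigma_\beta^n\ge\beta^n$.

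The upper bound, however, has a genuine gap, and the ``key observation'' underlying it is false as stated. It is not true that every admissible word of length $n$ other than $(\varepsilon_1^\ast(1,\beta),\ldots,\varepsilon_n^\ast(1,\beta))$ admits every digit $\le\varepsilon_1^\ast(1,\beta)$ as a continuation: by Theorem \ref{P1}(1) one must test \emph{every} suffix of $(w,d)$ against the corresponding prefix of $\varepsilon^\ast(1,\beta)$, so an extension is obstructed whenever any suffix of $w$ (not just $w$ itself) coincides with a prefix of $\varepsilon^\ast(1,\beta)$. For $\beta$ the golden ratio, $\varepsilon^\ast(1,\beta)=(1,0,1,0,\ldots)$ and the word $(0,1)\neq(1,0)$ cannot be followed by the digit $1$. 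Doing the suffix bookkeeping correctly yields the convolution identity $a_n=\sum_{k=1}^n\varepsilon_k^\ast(1,\beta)\,a_{n-k}+1$ with $a_0=1$, but neither your proposed recursion $a_{n+1}\le\beta a_n+1$ nor the target bound follows from it by a naive induction: the induction step loses exactly the tail $\sum_{k>n}\varepsilon_k^\ast(1,\beta)\beta^{-k}$, which can be much smaller than $(\beta-1)\beta^{-(n+1)}$, and the recursion $a_{n+1}\le\beta a_n+1$ itself is left unproved (the per-word excess $\lceil\beta\rceil-\beta$ on full cylinders can a priori accumulate well beyond $+1$). The clean fix is metric, in the same spirit as your lower bound: for $w\in\Sigma_\beta^n$, $T_\beta^n$ maps $I_n(w)$ affinely onto $[0,s_w)$ with $s_w=\beta^n|I_n(w)|\le1$, and the number of admissible one-digit extensions of $w$ is exactly $\lceil\beta s_w\rceil\le\beta s_w+1$. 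Summing over $w$ and using $\sum_w|I_n(w)|=1$ gives $a_{n+1}\le\beta^{n+1}+a_n$, and telescoping from $a_0=1$ yields $a_n\le1+\beta+\cdots+\beta^n=(\beta^{n+1}-1)/(\beta-1)<\beta^{n+1}/(\beta-1)$.
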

\section{Cylinders of $\beta$ in the parameter space}
From this section to the end of this paper, we consider the $n$th cylinders of some fixed $\beta>1$ in parameter space $(1,+\infty)$. We will give an evaluation of the lower and upper bound of the length of the $n$th cylinder in the parameter space of $\beta$, which is closely related to the notion of $n$th recurrence time of $\beta$.
\begin{definition}
Let $n\geq 1$ and $\omega=(\omega_1, \ldots,\omega_n)\in \Lambda_n$, we can define the cylinder of $\omega$ in the parameter space as $$I^P_n(\omega)=\{\beta\in(1,+\infty):\varepsilon_1(1,\beta)=\omega_1,\ldots, \varepsilon_n(1,\beta)=\omega_n\},$$
i.e.,\ the set of $\beta$ whose $\beta$-expasion of $1$ begins with the common prefix $\omega_1, \cdots,\omega_n$. The length of the cylinder in the parameter space is denoted as $|I^P_n(\omega)|$. For convenience, we also denote the left endpoint of $I^P_n(\omega)$ as $\underline{\beta}(\omega)$ and the right endpoint of $I^P_n(\omega)$ as $\overline{\beta}(\omega)$.
\end{definition}

\subsection{Recurrence time of $\beta$}
\begin{definition}
Let $\omega=(\omega_1,\ldots,\omega_n)\in \Lambda_n$, we can define the recurrence time $\tau(\omega)$ of $\omega$ as
\begin{equation}\label{tau}
\tau(\omega):=\inf\{1\leq k < n:\sigma^k(\omega_1,\ldots,\omega_n)=(\omega_1,\ldots, \omega_{n-k})\}.
\end{equation}
If such an integer $k$ does not exist, we set $\tau(\omega)=n$, and the word $\omega$ is called non-recurrent word.
\end{definition}

The following result is immediate by the above definition.

\begin{remark}
(1) Denote
\begin{equation}\label{t}
t(\omega):= n-\left \lfloor\frac{n}{\tau(\omega)}\right \rfloor\tau(\omega).
\end{equation}
From the definition of the recurrence time $\tau(\omega)$, we can easily obtain that:
$$(\omega_1,\ldots,\omega_n)=\left((\omega_1,\ldots,\omega_{\tau(\omega)})^{\lfloor \frac{n}{\tau(\omega)}\rfloor},\omega_1,\ldots,\omega_{t(\omega)}\right),$$ where $\omega^k=(\underbrace{\omega,\ldots,\omega}_k)$ for every $k\geq 1$.

(2) If $\omega=(\omega_1,\ldots,\omega_n)$ is non-recurrent, then the word $(\omega_1,\ldots,\omega_n,0^k)$ is non-recurrent for all $k\geq 1$.
\end{remark}

Now we give a property of the recurrence time as follow.
\begin{lemma}\label{re}
Let $\omega=(\omega_1,\ldots,\omega_n)\in \Lambda_n$, if $\tau(\omega)=k$, then the word $(\omega_1,\ldots,\omega_k)$ is non-recurrent.
\end{lemma}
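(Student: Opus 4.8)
The plan is to argue by contradiction. Suppose $\omega=(\omega_1,\dots,\omega_n)\in\Lambda_n$ has $\tau(\omega)=k$; if $k=n$ then $(\omega_1,\dots,\omega_k)=\omega$ is non-recurrent by definition, so I may assume $k<n$ and, for contradiction, that $u:=(\omega_1,\dots,\omega_k)$ is recurrent. Then $u$ has some period $j$ with $1\le j<k$, i.e.\ $\omega_{i+j}=\omega_i$ for $1\le i\le k-j$; writing $k=sj+r$ with $s=\lfloor k/j\rfloor\ge1$ and $0\le r<j$, iterating periodicity shows $u=(\omega_1,\dots,\omega_j)^{s}(\omega_1,\dots,\omega_r)$. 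From $\tau(\omega)=k$ I extract two facts I will use repeatedly: first, $\sigma^k\omega=(\omega_1,\dots,\omega_{n-k})$, i.e.\ $\omega$ is itself $k$-periodic, $\omega_{k+m}=\omega_m$ for $1\le m\le n-k$; second, for each $1\le i<k$ one has $\sigma^i\omega\ne(\omega_1,\dots,\omega_{n-i})$ by minimality of $\tau(\omega)$, which combined with self-admissibility $\sigma^i\omega\le_{\rm{lex}}(\omega_1,\dots,\omega_{n-i})$ upgrades to the \emph{strict} inequality $\sigma^i\omega<_{\rm{lex}}(\omega_1,\dots,\omega_{n-i})$.

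The heart of the argument is to apply this strict inequality at the two carefully chosen indices $i=r$ and $i=k-r=sj$. Assume first $r\ge1$; then both indices lie strictly between $0$ and $k$. Using the block structure of $u$ one has $(\omega_{k-r+1},\dots,\omega_k)=(\omega_1,\dots,\omega_r)$, and then the $k$-periodicity of $\omega$ lets me compute $\sigma^{k-r}\omega=(\omega_1,\dots,\omega_r,\omega_1,\dots,\omega_{n-k})$. Since this agrees in its first $r$ letters with $(\omega_1,\dots,\omega_{n-k+r})$, the strict inequality at $i=k-r$ collapses to $(\omega_1,\dots,\omega_{n-k})<_{\rm{lex}}(\omega_{r+1},\dots,\omega_{n-k+r})$. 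On the other hand, the strict inequality at $i=r$ reads $(\omega_{r+1},\dots,\omega_n)<_{\rm{lex}}(\omega_1,\dots,\omega_{n-r})$, and truncating both words to their first $n-k$ letters (legitimate since $n-k<n-r$) gives $(\omega_{r+1},\dots,\omega_{n-k+r})\le_{\rm{lex}}(\omega_1,\dots,\omega_{n-k})$. These two conclusions are incompatible, so the case $r\ge1$ is impossible.

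It then remains to handle $r=0$. In that case $k=sj$ and $u=(\omega_1,\dots,\omega_j)^{s}$, so by the Remark $\omega=\bigl(u^{\lfloor n/k\rfloor},\omega_1,\dots,\omega_{t(\omega)}\bigr)$ is a prefix of the purely periodic sequence $(\omega_1,\dots,\omega_j)^{\infty}$; hence $\omega$ has period $j$, i.e.\ $\sigma^j\omega=(\omega_1,\dots,\omega_{n-j})$, and since $1\le j<k<n$ this contradicts $\tau(\omega)=k$. Thus in every case the assumption that $(\omega_1,\dots,\omega_k)$ is recurrent fails, which is what we want.

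I expect the main obstacle to be finding the right shift indices. The statement genuinely needs self-admissibility — for an arbitrary word such as $(1,0,1,1,0)$ one has $\tau=3$ while its length-$3$ prefix $(1,0,1)$ is recurrent — so the proof must squeeze a contradiction purely out of the lexicographic constraints, and it only closes up because shifting by $k-r$ and by $r$ produces words whose relevant overlap is exactly $(\omega_1,\dots,\omega_{n-k})$ versus $(\omega_{r+1},\dots,\omega_{n-k+r})$; more naive choices (e.g.\ shifting by $j$) yield inequalities that do not combine. The only remaining delicate points are bookkeeping: verifying all indices stay in range and justifying the passage from $\le_{\rm{lex}}$ to $<_{\rm{lex}}$, both of which rest on $r<k$ and $k-r<k=\tau(\omega)$.
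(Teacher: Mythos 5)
Your proof is correct, and its engine is the same as the paper's: both arguments pair the strict inequality $\sigma^i\omega<_{\rm lex}(\omega_1,\ldots,\omega_{n-i})$ (derived from self-admissibility plus the minimality of $\tau(\omega)=k$) with the non-strict self-admissibility inequality at the complementary shift $k-i$, using the $k$-periodicity $\sigma^k\omega=(\omega_1,\ldots,\omega_{n-k})$ to identify the overlapping tails. The difference is only in packaging. The paper runs this pairing directly at every $1\le i<k$ and concludes the stronger fact $(\omega_{i+1},\ldots,\omega_k)<_{\rm lex}(\omega_1,\ldots,\omega_{k-i})$, with no case analysis; you argue by contradiction from an assumed period $j$ of the prefix and apply the pairing only at the derived shifts $r=k\bmod j$ and $k-r$, which is what forces your separate (correct, but avoidable) case $r=0$. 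On your closing remark: the ``naive'' shift by $j$ does in fact combine. Using the assumed recurrence $(\omega_{j+1},\ldots,\omega_k)=(\omega_1,\ldots,\omega_{k-j})$ to collapse the strict inequality at $i=j$ gives $(\omega_1,\ldots,\omega_{n-k})<_{\rm lex}(\omega_{k-j+1},\ldots,\omega_{n-j})$, while truncating the self-admissibility inequality at $i=k-j$ gives the reverse $\le_{\rm lex}$; that is exactly the paper's computation, and it removes both the block decomposition $u=(\omega_1,\ldots,\omega_j)^{s}(\omega_1,\ldots,\omega_r)$ and the case split from your write-up.
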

\begin{proof}
When $k=n$, we easily get that the result is ture.

When $k<n$, then the definition of $\tau(\omega)$ provides that for all $1\leq i < k$, we have $$\sigma^i\omega=(\omega_{i+1},\ldots,\omega_k,(\omega_1,\ldots,\omega_k)^{\lfloor\frac{n}{k}\rfloor-1},\omega_1,\ldots,\omega_{n-\lfloor\frac{n}{k}\rfloor k})<_{\rm lex}(\omega_1,\ldots,\omega_{k-i},\omega_{k-i+1},\ldots,\omega_{n-i}).$$ Moreover, the self-admissibility of $\omega$ gives that $$((\omega_1,\ldots,\omega_k)^{\lfloor\frac{n}{k}\rfloor-1},\omega_1,\ldots,\omega_{n-\lfloor\frac{n}{k}\rfloor k})\geq_{\rm lex}(\omega_{k-i+1},\ldots,\omega_{n-i}).$$ Thus, we have $(\omega_{i+1},\ldots,\omega_k)<_{\rm lex}(\omega_1,\ldots,\omega_{k-i})$ for each $1\leq i <k$, that is, $(\omega_1,\ldots,\omega_k)$ is non-recurrent.
\end{proof}

We write $\omega^+$ as the next word of $\omega$ in $\Lambda_n$ in the sense of lexicographical order, the definition of recurrence time and the criterion of self-admissibility of a sequence give the following result.

\begin{lemma}\label{nextword}
(1) Let $\omega=(\omega_1,\ldots,\omega_n)\in \Lambda_n$ be non-recurrent, then $\omega^+=(\omega_1,\ldots,\omega_n+1).$

(2) Let $\omega=(\omega_1,\ldots,\omega_n)\in \Lambda_n$ with $\tau(\omega)=k<n$, then $\omega^+=(\omega_1,\ldots,\omega_k+1,0^{n-k}).$
\end{lemma}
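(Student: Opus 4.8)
The plan is to prove both parts by the same two-step scheme: first show the proposed word is self-admissible (so it lies in $\Lambda_n$), and then show that no element of $\Lambda_n$ lies strictly between $\omega$ and the proposed word in the lexicographical order; together these identify it as $\omega^+$.

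For part (1), I would begin by sharpening self-admissibility using non-recurrence: since $\sigma^i\omega\neq(\omega_1,\ldots,\omega_{n-i})$ for every $1\le i<n$, the admissibility inequality becomes the \emph{strict} one, $\sigma^i\omega<_{\rm lex}(\omega_1,\ldots,\omega_{n-i})$. Now $\sigma^i(\omega_1,\ldots,\omega_{n-1},\omega_n+1)=(\omega_{i+1},\ldots,\omega_{n-1},\omega_n+1)$ differs from $\sigma^i\omega$ only in its last coordinate; if the place where $\sigma^i\omega$ first drops below $(\omega_1,\ldots,\omega_{n-i})$ is strictly before that last coordinate the inequality is unaffected by the increment, while if it is exactly the last coordinate then $\omega_n<\omega_{n-i}$, hence $\omega_n+1\le\omega_{n-i}$ and the inequality still holds. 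So $(\omega_1,\ldots,\omega_{n-1},\omega_n+1)\in\Lambda_n$. Finally, any length-$n$ word that is $>_{\rm lex}\omega$ is automatically $\ge_{\rm lex}(\omega_1,\ldots,\omega_{n-1},\omega_n+1)$ (a difference from $\omega$ in an earlier coordinate forces a strictly larger word), so this word is already the lexicographic successor of $\omega$ among all words, a fortiori within $\Lambda_n$.

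For part (2), write $\omega^\ast=(\omega_1,\ldots,\omega_{k-1},\omega_k+1,0^{n-k})$. Membership $\omega^\ast\in\Lambda_n$ I would deduce from part (1): by Lemma \ref{re} the word $(\omega_1,\ldots,\omega_k)$ is non-recurrent, and being a prefix of $\omega$ it is self-admissible, so by part (1) $(\omega_1,\ldots,\omega_{k-1},\omega_k+1)\in\Lambda_k$; appending a block of $0$'s preserves self-admissibility (shifts $\sigma^i$ with $i\ge k$ become strings of $0$'s, and shifts with $i<k$ only acquire trailing $0$'s, which cannot destroy an already valid $\le_{\rm lex}$ inequality). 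Clearly $\omega<_{\rm lex}\omega^\ast$, since they first differ at coordinate $k$. To rule out $\omega'\in\Lambda_n$ with $\omega<_{\rm lex}\omega'<_{\rm lex}\omega^\ast$, let $p$ be the first coordinate at which $\omega'$ exceeds $\omega$. If $p<k$, or $p=k$ with $\omega'_k>\omega_k+1$, then $\omega'>_{\rm lex}\omega^\ast$; if $p=k$ and $\omega'_k=\omega_k+1$, then $\omega'$ agrees with $\omega^\ast$ through coordinate $k$, which is followed by $0$'s in $\omega^\ast$, forcing $\omega'\ge_{\rm lex}\omega^\ast$; all of these contradict $\omega'<_{\rm lex}\omega^\ast$. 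The remaining case is $p>k$: here $\tau(\omega)=k$ gives $\omega_m=\omega_{m-k}$ for $k<m\le n$, so $\omega'_p>\omega_p=\omega_{p-k}=\omega'_{p-k}$ while $\omega'$ and $\omega$ agree on all coordinates below $p$; comparing $\sigma^k\omega'$ with $(\omega'_1,\ldots,\omega'_{n-k})$, they agree through position $p-k-1$ and then $\omega'_p>\omega'_{p-k}$ at position $p-k$, so $\sigma^k\omega'>_{\rm lex}(\omega'_1,\ldots,\omega'_{n-k})$, contradicting the self-admissibility of $\omega'$. Hence $\omega^+=\omega^\ast$.

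I expect the genuine obstacle to be precisely this last case of part (2): one must use the $k$-periodic structure of $\omega$ forced by $\tau(\omega)=k$ together with the self-admissibility test of the competitor $\omega'$ at the shift $\sigma^k$. The supporting facts — that prefixes of self-admissible words are self-admissible, that appending $0$'s preserves self-admissibility, and the reduction of part (2) to part (1) via Lemma \ref{re} — are routine but should be stated explicitly.
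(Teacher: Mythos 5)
Your proof is correct and follows essentially the same route as the paper: part (1) by upgrading self-admissibility to a strict inequality via non-recurrence, and part (2) by combining Lemma \ref{re} with part (1) for membership and then deriving a contradiction from the self-admissibility of a hypothetical intermediate word using the $k$-periodicity forced by $\tau(\omega)=k$. The only (harmless) difference is that in the final step you test the competitor at the shift $\sigma^k$ while the paper uses $\sigma^{\ell k}$ with $k_0=\ell k+t$; your version is marginally cleaner, and you also fill in a few details the paper leaves implicit (the last-coordinate case in part (1), why the first disagreement must occur after position $k$, and why appending zeros preserves self-admissibility).
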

\begin{proof}
(1) It is obvious that for any word $\omega' \in  \Lambda_n$ with $\omega <_{\rm{lex}} \omega'$, we have $(\omega_1,\ldots,\omega_n+1) \leq_{\rm{lex}} \omega'$. Thus, we only need to show that $(\omega_1,\ldots,\omega_n+1)$ is self-admissible.

In fact, since $\omega$ is a non-recurrent word, then for every $1 \leq k < n$,  we have $$\sigma^k(\omega_1,\ldots,\omega_n)\neq(\omega_1,\ldots, \omega_{n-k}),$$ moreover, the self-admissibility of $\omega$ ensures that $$\sigma^k(\omega_1,\ldots,\omega_n)\leq_{\rm{lex}}(\omega_1,\ldots, \omega_{n-k}),$$ so $\sigma^k(\omega_1,\ldots,\omega_n)<_{\rm{lex}}(\omega_1,\ldots, \omega_{n-k})$ which implies $\sigma^k(\omega_1,\ldots,\omega_n+1)\leq_{\rm{lex}}(\omega_1,\ldots, \omega_{n-k})$.

(2) The recurrence time $\tau(\omega)=k<n$ of $\omega$ implies that $(\omega_1,\ldots,\omega_k)$ is non-recurrent (Lemma \ref{re}), then by (1), we get that $(\omega_1,\ldots,\omega_k+1)$ is self-admissible, and $(\omega_1,\ldots,\omega_k+1,0^{n-k})$ is self-admissible as well.

We claim that for every $\omega' \in  \Lambda_n$ with $\omega <_{\rm{lex}} \omega'$, we have $(\omega_1,\ldots,\omega_k+1,0^{n-k}) \leq_{\rm{lex}} \omega'$. In fact, assume that the claim is false, then there exists $u \in  \Lambda_n$, such that $$\omega <_{\rm{lex}} u <_{\rm{lex}}(\omega_1,\ldots,\omega_k+1,0^{n-k}).$$ Write $u=(\omega'_1,\ldots,\omega'_n)$, then there exists an integer $k_0$ with $k <k_0\leq n$, such that $\omega_j=\omega'_j$ for every $0 \leq j <k_0$  but $\omega_{k_0}<\omega'_{k_0}$. Let $k_0=\ell k+t$, where $\ell \geq 1$, $0 < t \leq k$, the definition of $\tau(\omega)=k$ implies that $$\sigma^{\ell k}(\omega_1,\ldots,\omega_{k_0}) = (\omega_{\ell k+1},\ldots,\omega_{\ell k+ t-1},\omega_{\ell k+t})=(\omega_1,\ldots,\omega_{t-1},\omega_t) = (\omega_1,\ldots,\omega_{t-1},\omega_{k_0}),$$ and $$\sigma^{\ell k}(\omega'_1,\ldots,\omega'_{k_0}) =(\omega'_{\ell k+1},\ldots,\omega'_{\ell k+ t-1},\omega'_{k_0})= (\omega_{\ell k+1},\ldots,\omega_{\ell k+ t-1},\omega'_{k_0}) =(\omega_1,\ldots,\omega_{t-1},\omega'_{k_0}).$$ The self-admissibility of  $\omega'_0 $ shows that $$\sigma^{\ell k}(\omega'_1,\ldots,\omega'_{k_0})=(\omega_1,\ldots,\omega_{t-1},\omega'_{k_0})\leq_{\rm{lex}} (\omega'_1,\ldots,\omega'_t)=(\omega_1,\ldots,\omega_t) = (\omega_1,\ldots,\omega_{t-1},\omega_{k_0}).$$ The second equality is ensured by $t\leq k <k_0$. A contraction with $\omega_{k_0}<\omega'_{k_0}$.
\end{proof}

Applying the definition of the recurrence time of $\omega\in \Lambda_n$, the \emph{$n$th recurrence time of $\beta$} can be defined as  \begin{equation}\label{taun}
\tau_n(\beta):=\tau(\varepsilon_1(1,\beta),\ldots,\varepsilon_n(1,\beta)).
\end{equation}
Now we give a simple fact of the $n$th recurrence time $\tau_n(\beta)$.
\begin{lemma}\label{rec}
Let $\beta>1$, then we have $\tau_n(\beta)$ is increasing to $\infty$ as $n\rightarrow\infty$.
\end{lemma}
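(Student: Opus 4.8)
The claim has two parts: that $\tau_n(\beta)$ is (weakly) increasing in $n$, and that it tends to infinity. For the monotonicity, I would argue directly from the definition \eqref{tau}--\eqref{taun}. Write $\omega = (\varepsilon_1(1,\beta),\ldots,\varepsilon_{n+1}(1,\beta))$ and let $\omega' = (\varepsilon_1(1,\beta),\ldots,\varepsilon_n(1,\beta))$ be its length-$n$ prefix, so $\tau_{n+1}(\beta) = \tau(\omega)$ and $\tau_n(\beta) = \tau(\omega')$. Suppose $\tau(\omega) = k$ with $k \le n$ (if $k = n+1$ there is nothing to prove since then $\tau_{n+1}(\beta) = n+1 > n \ge \tau_n(\beta)$). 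Then $\sigma^k\omega = (\omega_1,\ldots,\omega_{n+1-k})$, and truncating the last coordinate on both sides gives $\sigma^k\omega' = (\omega_1,\ldots,\omega_{n-k})$, so $k$ is one of the integers in the set defining $\tau(\omega')$; hence $\tau(\omega') \le k$, i.e. $\tau_n(\beta) \le \tau_{n+1}(\beta)$. This is the easy half.

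For the divergence, the key point is that $\tau_n(\beta)$ stays bounded, say $\tau_n(\beta) \le M$ for infinitely many $n$, forces $\varepsilon^\ast(1,\beta)$ to be eventually periodic with period dividing some $p \le M$, and in fact forces $\beta$ to be a simple Parry number. Here is the mechanism I would use. If $\tau_n(\beta) = k$ then by the Remark following the recurrence-time definition, $(\varepsilon_1(1,\beta),\ldots,\varepsilon_n(1,\beta))$ is the periodic word $(\varepsilon_1,\ldots,\varepsilon_k)^{\lfloor n/k\rfloor}\,\varepsilon_1\cdots\varepsilon_{t}$ with $t = t(\omega)$. If this happens for infinitely many $n$ with $k$ ranging in a bounded set, then by pigeonhole some fixed value $k_0$ occurs for infinitely many $n$; letting $n \to \infty$ along that subsequence shows that the whole sequence $\varepsilon(1,\beta)$ is the periodic sequence $(\varepsilon_1,\ldots,\varepsilon_{k_0})^\infty$. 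But a genuinely periodic $\beta$-expansion of $1$ means $\beta$ is a simple Parry number — equivalently $\varepsilon(1,\beta)$ is finite, ending in $0^\infty$, which contradicts it being $(\varepsilon_1,\ldots,\varepsilon_{k_0})^\infty$ with not all $\varepsilon_i = 0$ (and all-zero is impossible since $\varepsilon_1(1,\beta) = \lfloor\beta\rfloor \ge 1$). For the simple Parry case one works instead with $\varepsilon^\ast(1,\beta)$, which is itself periodic, say with minimal period $p$; then one checks that for all large $n$, $\tau_n(\beta) = n - p + (\text{a bounded correction})$, in particular $\tau_n(\beta) \to \infty$. So in every case $\tau_n(\beta) \to \infty$.

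The main obstacle I anticipate is bookkeeping in the simple-Parry subcase: there the admissibility/self-admissibility used throughout is phrased in terms of $\varepsilon^\ast(1,\beta)$, not $\varepsilon(1,\beta)$, and the recurrence time $\tau_n(\beta)$ in \eqref{taun} is defined using the \emph{finite} expansion $\varepsilon(1,\beta)$; one must be careful that a long run of trailing zeros in $(\varepsilon_1(1,\beta),\ldots,\varepsilon_n(1,\beta))$ does not create a spurious small recurrence time. Using Lemma \ref{re} (so that $\tau_n(\beta) = k$ makes $(\varepsilon_1,\ldots,\varepsilon_k)$ non-recurrent) together with part (2) of the Remark (appending zeros to a non-recurrent word keeps it non-recurrent) should let me pin down $\tau_n(\beta)$ exactly for large $n$ and confirm it grows. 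Apart from that, the proof is a short combinatorial argument plus one appeal to Theorem \ref{P2} to rule out truly periodic expansions of $1$.
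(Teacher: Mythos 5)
Your proposal is correct and follows essentially the same route as the paper: monotonicity by truncating the witnessing shift, and divergence by noting that a bounded (hence eventually constant) recurrence time would force $\varepsilon(1,\beta)$ to be purely periodic, which is impossible. The paper dispatches the periodicity contradiction in one line (such a $\beta$ would actually have the finite expansion $(\varepsilon_1,\ldots,\varepsilon_N+1,0^\infty)$ rather than the periodic one), so the extra simple-Parry bookkeeping you anticipate, while a legitimate concern, is already absorbed into that single contradiction.
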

\begin{proof}
On the one hand, we show that  $\tau_n(\beta)$ is increasing as $n$ increase. If there exists $n_0\geq 1$ such that $\tau_{n_0+1}(\beta)<\tau_{n_0}(\beta)$, by the definition of recurrence time, we have $$\sigma^{\tau_{n_0+1}(\beta)}(\varepsilon_1(1,\beta),\ldots,\varepsilon_{n_0+1}(1,\beta))=\left(\varepsilon_1(1,\beta),\ldots, \varepsilon_{n_0+1-\tau_{n_0+1}(\beta)}(1,\beta)\right).$$ This means
$$\sigma^{\tau_{n_0+1}(\beta)}(\varepsilon_1(1,\beta),\ldots,\varepsilon_{n_0}(1,\beta))=\left(\varepsilon_1(1,\beta),\ldots, \varepsilon_{n_0-\tau_{n_0+1}(\beta)}(1,\beta)\right).$$  Which contradicts the definition of $\tau_{n_0}(\beta)$.

On the other hand, we prove that $\tau_n(\beta)\rightarrow \infty$  as $n\rightarrow\infty$. If $\tau_n(\beta)<\infty$, then there exists $M\geq1$ such that $\tau_n\leq M$ for every $n\geq1$. Since $\tau_n(\beta)$ is increasing by the above discussion, so the maximum of $\tau_n(\beta)$ exists, denote $\max\{\tau_n(\beta)\}=N$. This illustrates that the infinite $\beta$-expansion of $1$ equals to $(\varepsilon_1(1,\beta),\ldots,\varepsilon_N(1,\beta))^\infty$ in other words, ones get that $$\varepsilon(1,\beta)=(\varepsilon_1(1,\beta),\ldots,\varepsilon_N(1,\beta)+1,0^\infty).$$
A contradiction.
\end{proof}

In addition, a sufficient condition to ensure that a word is non-recurrent is given by B. Li et al.\ \cite{LP} which will be of importance to estimate the lower density of $\beta$ in parameter space.
\begin{lemma}\label{non}
Suppose that $(\omega_1,\ldots,\omega_{n-1},\omega_n)$ and $(\omega_1,\ldots,\omega_{n-1},\omega'_n)$ are both self-admissible and $0\leq\omega_n<\omega'_n$. Then $\tau(\omega_1,\ldots,\omega_n)=n.$
\end{lemma}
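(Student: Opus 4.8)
The plan is to argue by contradiction. Suppose to the contrary that $\tau(\omega_1,\ldots,\omega_n) = k$ for some $1 \le k < n$. By the definition of the recurrence time \eqref{tau}, this means that
\[
\sigma^k(\omega_1,\ldots,\omega_n) = (\omega_1,\ldots,\omega_{n-k}),
\]
and in particular, comparing the last coordinates of the two sides, we obtain $\omega_n = \omega_{n-k}$. I want to combine this with the self-admissibility of the two words $(\omega_1,\ldots,\omega_{n-1},\omega_n)$ and $(\omega_1,\ldots,\omega_{n-1},\omega'_n)$ together with the strict inequality $\omega_n < \omega'_n$ to produce a contradiction. The natural place where the difference between $\omega_n$ and $\omega'_n$ can be exploited is a shift by $k$ applied to the word ending in $\omega'_n$.

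First I would spell out what self-admissibility of $(\omega_1,\ldots,\omega_{n-1},\omega'_n)$ says at shift $k$: since $1 \le k < n$, we have
\[
\sigma^k(\omega_1,\ldots,\omega_{n-1},\omega'_n) = (\omega_{k+1},\ldots,\omega_{n-1},\omega'_n) \le_{\rm lex} (\omega_1,\ldots,\omega_{n-k}).
\]
On the other hand, the periodicity relation coming from $\tau(\omega_1,\ldots,\omega_n)=k$ gives $(\omega_{k+1},\ldots,\omega_{n-1}) = (\omega_1,\ldots,\omega_{n-k-1})$ and $\omega_{n-k} = \omega_n < \omega'_n$. Substituting these into the displayed inequality, the first $n-k-1$ coordinates on both sides agree, so the lexicographic comparison is decided at position $n-k$, where the left side has $\omega'_n$ and the right side has $\omega_{n-k} = \omega_n$. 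Thus the inequality forces $\omega'_n \le \omega_n$, contradicting $\omega_n < \omega'_n$.

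The only genuinely delicate point is making sure the index bookkeeping in the periodicity relation is correct — in particular that the block $(\omega_{k+1},\ldots,\omega_{n-1},\omega_n)$ really does coincide with $(\omega_1,\ldots,\omega_{n-k})$ coordinate-by-coordinate, which is exactly what $\sigma^k(\omega_1,\ldots,\omega_n)=(\omega_1,\ldots,\omega_{n-k})$ asserts, and that dropping the last coordinate preserves the equality on the first $n-k-1$ entries. One should also note the edge case $k = n-1$, where the common prefix is empty and the comparison is immediate from $\omega'_n > \omega_n \ge 0$. Once these indices are checked, the contradiction is immediate, so $\tau(\omega_1,\ldots,\omega_n)$ cannot be any $k < n$, and therefore $\tau(\omega_1,\ldots,\omega_n) = n$ as claimed.
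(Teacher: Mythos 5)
Your argument is correct: the paper itself states this lemma as a quotation from B.~Li et al.~\cite{LP} without reproducing a proof, and your contradiction argument is exactly the natural one. Assuming $\tau(\omega_1,\ldots,\omega_n)=k<n$ forces $\omega_{k+j}=\omega_j$ for $1\le j\le n-k$, hence $\omega_{n-k}=\omega_n$, and applying self-admissibility of $(\omega_1,\ldots,\omega_{n-1},\omega'_n)$ at shift $k$ then yields $\omega'_n\le \omega_{n-k}=\omega_n$, contradicting $\omega_n<\omega'_n$; the index bookkeeping and the edge case $k=n-1$ are handled correctly.
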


\subsection{The lengths of the $n$th cylinder of $\beta$ in the parameter space  }

We first recall a result of  Schemling \cite{S}(see also \cite{LP,LW}) which gives the concrete expression of the left endpoints $\underline{\beta}(\omega)$ and right endpoints $\overline{\beta}(\omega)$ of the cylinder  $I^P_n(\omega)$ and gives an evaluation of the upper bound of $|I^P_n(\omega)|$.
\begin{lemma}[Schemling\cite{S}]\label{S}
Let $\omega=(\omega_1,\ldots,\omega_n)\in \Lambda_n$, $\underline{\beta}(\omega)$ is given as the only solution in $[1,+\infty)$ of the equation
$$1=\frac{\omega_1}{x}+\cdots+\frac{\omega_n}{x^n}.$$
$\overline{\beta}(\omega)$ is the limit of the unique solution $\{\omega_{n+k}\}_{k \geq 0}$ in $(1,+\infty)$ of the equation $$1=\frac{\omega_1}{x}+\cdots+\frac{\omega_n}{x^n}+\frac{\omega_{n+1}}{x^{n+1}}+\cdots+\frac{\omega_{n+k}}{x^{n+k}},\ k \geq 0,$$
where $(\omega_1,\ldots,\omega_n,\omega_{n+1},\ldots,\omega_{n+k})$ is the maximal word of length $n+k$ beginning with $(\omega_1,\ldots,\omega_n)$ in the sense of lexicographical order. If $\underline{\beta}(\omega)>1$, then the set $I^P_n(\omega)$ is a half open interval $[\underline{\beta}(\omega),\overline{\beta}(\omega));$ if $\underline{\beta}(\omega)=1$, then $I^P_n(\beta)$ is an open interval $(1,\overline{\beta}(\omega)).$
Moreover, $$|I^P_n(\omega)|\leq \overline{\beta}(\omega)^{-n+1}.$$
\end{lemma}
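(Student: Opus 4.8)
The plan is to prove the statement in two stages: first show that $I^P_n(\omega)$ is an interval and compute its two endpoints explicitly, and then obtain the length bound from a single mean value estimate once the endpoints are in hand.

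I would begin with the left endpoint and the interval structure. Every $\beta>1$ has a well-defined prefix $(\varepsilon_1(1,\beta),\dots,\varepsilon_n(1,\beta))\in\Lambda_n$, so $\{I^P_n(\nu):\nu\in\Lambda_n\}$ partitions $(1,+\infty)$; the strict monotonicity of $\beta\mapsto\varepsilon^\ast(1,\beta)$ (Theorem \ref{P1}(3)) makes this prefix non-decreasing for $\leq_{\rm{lex}}$, with an upward jump only at simple Parry numbers, which forces each block of the partition to be an interval. Now set $\phi(x)=\omega_1/x+\cdots+\omega_n/x^n$; since $\omega_1=\lfloor\underline\beta(\omega)\rfloor\geq1$, the function $\phi$ is strictly decreasing on $[1,+\infty)$ with $\phi(1)\geq1$ and $\phi(+\infty)=0$, so $\phi(x)=1$ has a unique root, which I name $\underline\beta(\omega)$. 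If $\beta\in I^P_n(\omega)$ then $1=\sum_{i\geq1}\varepsilon_i(1,\beta)\beta^{-i}\geq\phi(\beta)$, so $\phi(\beta)\leq1=\phi(\underline\beta(\omega))$ and hence $\beta\geq\underline\beta(\omega)$; conversely, $\omega$ is self-admissible, so Theorem \ref{P2} yields a $\beta$ whose $\beta$-expansion of $1$ is $\omega$ followed by zeros, and by uniqueness of the root this $\beta$ equals $\underline\beta(\omega)$, which therefore lies in $I^P_n(\omega)$ as soon as $\underline\beta(\omega)>1$ (the root is $1$ only for $\omega=(1,0^{n-1})$, in which case the left endpoint is not attained).

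For the right endpoint, let $(\omega_1,\dots,\omega_{n+k})$ be the $\leq_{\rm{lex}}$-largest self-admissible word of length $n+k$ beginning with $\omega$; since appending a $0$ preserves self-admissibility, the maximal word of length $n+k+1$ restricts to the one of length $n+k$, so these words are nested and extend to a self-admissible sequence $(\omega_1,\omega_2,\dots)$ all of whose digits are $\leq\omega_1$. Hence the roots $\beta_{n+k}$ of $1=\omega_1/x+\cdots+\omega_{n+k}/x^{n+k}$ are non-decreasing and bounded above by $\omega_1+1$, so $\overline\beta(\omega):=\lim_k\beta_{n+k}$ exists with $\sum_{i\geq1}\omega_i/\overline\beta(\omega)^i=1$. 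Each $\beta_{n+k}$ (whose expansion of $1$ is $(\omega_1,\dots,\omega_{n+k})$ followed by zeros) lies in $I^P_n(\omega)$, so $\sup I^P_n(\omega)\geq\overline\beta(\omega)$; conversely, for $\beta\in I^P_n(\omega)$ the length-$(n+k)$ prefix of $\varepsilon(1,\beta)$ is $\leq_{\rm{lex}}(\omega_1,\dots,\omega_{n+k})$, which on passing to values and letting $k\to\infty$ forces $\sum_{i\geq1}\omega_i/\beta^i\geq1$, i.e. $\beta\leq\overline\beta(\omega)$. To see that $\overline\beta(\omega)$ is not attained I would establish $\overline\beta(\omega)=\underline\beta(\omega^+)$ for the successor $\omega^+$ of $\omega$ in $\Lambda_n$: by Lemma \ref{re} and Lemma \ref{nextword} the maximal continuation of $\omega$ is $(\omega_1,\dots,\omega_{\tau(\omega)})^\infty$, whose value equals $1$ exactly when $\omega_1/x+\cdots+\omega_{\tau(\omega)}/x^{\tau(\omega)}=1-x^{-\tau(\omega)}$, which is precisely the equation defining $\underline\beta(\omega^+)$; since $\underline\beta(\omega^+)>1$ and $I^P_n(\omega)\cap I^P_n(\omega^+)=\emptyset$, the point $\overline\beta(\omega)$ lies outside $I^P_n(\omega)$. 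Combined with the previous step this gives $I^P_n(\omega)=[\underline\beta(\omega),\overline\beta(\omega))$, or $(1,\overline\beta(\omega))$ when $\underline\beta(\omega)=1$.

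Finally, the length bound. Writing $\underline\beta=\underline\beta(\omega)$, $\overline\beta=\overline\beta(\omega)$, put $R(x)=x\bigl(1-\omega_1/x-\cdots-\omega_n/x^n\bigr)=x-\omega_1-\omega_2/x-\cdots-\omega_n/x^{n-1}$. Then $R(\underline\beta)=0$ because $\phi(\underline\beta)=1$, whereas $R(\overline\beta)=\overline\beta\sum_{i>n}\omega_i/\overline\beta^{i}=\overline\beta^{-n+1}\sum_{j\geq1}\omega_{n+j}/\overline\beta^{j}\leq\overline\beta^{-n+1}$, the last inequality because $\sigma^n(\omega_1,\omega_2,\dots)\leq_{\rm{lex}}(\omega_1,\omega_2,\dots)$ (Theorem \ref{P1}(2)) and the value of a $\beta$-admissible sequence is non-decreasing in $\leq_{\rm{lex}}$. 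Since $R'(x)=1+\sum_{i=2}^n(i-1)\omega_i/x^{i}\geq1$ for $x>0$, the mean value theorem on $[\underline\beta,\overline\beta]$ gives $R(\overline\beta)-R(\underline\beta)\geq\overline\beta-\underline\beta$, so $|I^P_n(\omega)|=\overline\beta-\underline\beta\leq R(\overline\beta)\leq\overline\beta(\omega)^{-n+1}$. I expect the only genuine difficulty to be in the right-endpoint analysis — arguing rigorously that $\overline\beta(\omega)$ is at once the supremum of the cylinder and not a member of it — which forces one to keep $\varepsilon(1,\beta)$ and $\varepsilon^\ast(1,\beta)$ carefully separated at simple Parry numbers and to exploit the recurrence-time description of $\omega$ through Lemmas \ref{re} and \ref{nextword}; the interval structure and the closing mean value computation are routine by comparison.
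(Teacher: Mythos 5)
This lemma is quoted in the paper from Schmeling \cite{S} (see also \cite{LP}); the paper itself supplies no proof, so there is nothing internal to compare against. Your reconstruction is, in substance, the standard argument and I believe it is correct: the identification of $\underline{\beta}(\omega)$ via monotonicity of $x\mapsto\sum_i\omega_i x^{-i}$ and Theorem \ref{P2}, the identification of $\overline{\beta}(\omega)$ as the increasing limit of the roots attached to the maximal self-admissible continuations (which, as you say, stabilize to the periodic word $(\omega_1,\dots,\omega_{\tau(\omega)})^{\infty}$, consistent with Lemma \ref{max}), the exclusion of the right endpoint via $\overline{\beta}(\omega)=\underline{\beta}(\omega^{+})$, and especially the closing estimate with $R(x)=x(1-\sum_{i\le n}\omega_i x^{-i})$, $R'\ge 1$, and $R(\overline{\beta}(\omega))\le\overline{\beta}(\omega)^{-n+1}$, which is exactly the telescoping/mean-value computation used in the literature. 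Two places deserve more care than you give them. First, the interval structure: you derive it from monotonicity of $\beta\mapsto\varepsilon^{\ast}(1,\beta)$ (Theorem \ref{P1}(3)), but the cylinder is defined through $\varepsilon(1,\beta)$, which disagrees with $\varepsilon^{\ast}(1,\beta)$ precisely at simple Parry numbers; showing that the prefix map $\beta\mapsto(\varepsilon_1(1,\beta),\dots,\varepsilon_n(1,\beta))$ is still non-decreasing across such points is the actual content of Schmeling's theorem and should not be dismissed in one clause. Second, you twice pass from $u\leq_{\rm lex}v$ to $\sum_i u_i\beta^{-i}\le\sum_i v_i\beta^{-i}$; this implication is false for arbitrary digit strings and needs the tail bound $\sum_{j\ge1}u_{i+j}\beta^{-j}\le1$ for all shifts of the sequences involved (valid here because the sequences are expansions or quasi-greedy expansions of $1$), which you should state explicitly since the inequality $\sum_{j\ge1}\omega_{n+j}\overline{\beta}(\omega)^{-j}\le1$ is exactly what makes the final bound $\overline{\beta}(\omega)^{-n+1}$ come out. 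Neither point is a wrong turn — both are standard facts — but as written they are asserted rather than proved.
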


The following result introduced by B. Li et al.\ \cite{LP} and Lemma \ref{nextword} give the distribution of $I^P_n(\omega)$ (see Corollary \ref{dis}) for every $\omega \in \Lambda_n$ which implies that $I^P_n(\omega)\cap I^P_n(\omega')=\emptyset$ for different $\omega,\omega' \in \Lambda_n$.
\begin{lemma}[B. Li et al.\ \cite{LP}]\label{max}
Let $\omega=(\omega_1,\ldots,\omega_n)\in \Lambda_n$ with $\tau(\omega)=k$. Then the $\overline{\beta}(\omega)$-expansion of $1$ is given  as $$\varepsilon(1,\overline{\beta}(\omega))= (\omega_1,\ldots,\omega_{k}+1,0^\infty).$$
\end{lemma}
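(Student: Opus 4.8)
The plan is to combine Lemma~\ref{S}, which expresses $\overline{\beta}(\omega)$ as a limit of roots of truncated power--series equations, with a combinatorial identification of the lexicographically largest self--admissible extensions of the prefix $\omega|_n:=(\omega_1,\ldots,\omega_n)$.

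First I would extract from the hypothesis $\tau(\omega)=k$ an explicit candidate for the answer. By Lemma~\ref{re} the word $(\omega_1,\ldots,\omega_k)$ is non-recurrent, and being a prefix of the self--admissible word $\omega$ it lies in $\Lambda_k$; hence Lemma~\ref{nextword}(1) gives $(\omega_1,\ldots,\omega_k+1)\in\Lambda_k$, so the infinite sequence $\gamma:=(\omega_1,\ldots,\omega_k+1,0^\infty)$ is self--admissible. By Parry's Theorem~\ref{P2}, together with the strict monotonicity of $\beta\mapsto\varepsilon^\ast(1,\beta)$ in Theorem~\ref{P1}(3) for uniqueness, there is a unique $\beta^\ast>1$ with $\varepsilon(1,\beta^\ast)=\gamma$, and $\beta^\ast$ is the unique solution in $(1,\infty)$ of $1=\frac{\omega_1}{x}+\cdots+\frac{\omega_k+1}{x^k}$. (The degenerate case $\omega=0^n$, in which $I^P_n(\omega)=\emptyset$, is excluded at the outset since every self--admissible word other than $0^n$ has first letter $\ge 1$.) Note also that the infinite $\beta^\ast$--expansion of $1$ is $\varepsilon^\ast(1,\beta^\ast)=(\omega_1,\ldots,\omega_k)^\infty$, so by Theorem~\ref{P1}(2) this periodic sequence is self--admissible, and hence so is each of its finite truncations.

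It then remains to show $\overline{\beta}(\omega)=\beta^\ast$. By Lemma~\ref{S}, $\overline{\beta}(\omega)=\lim_{N\to\infty}\beta_N$, where $\beta_N$ is the root in $(1,\infty)$ of $1=\sum_{i=1}^{N}w^{(N)}_i/x^i$ and $w^{(N)}$ is the lexicographically largest self--admissible word of length $N$ beginning with $\omega|_n$. The heart of the proof is the claim $w^{(N)}=P_N$, where $P_N$ is the truncation of $(\omega_1,\ldots,\omega_k)^\infty$ to length $N$. That $P_N$ is self--admissible and begins with $\omega|_n$ follows from the previous paragraph and from the identity $\omega|_n=(\omega_1,\ldots,\omega_k)^\infty|_n$ (recorded in the remark following the definition of $\tau$). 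For the maximality I would show directly that every self--admissible $w$ of length $N$ with $w|_n=\omega|_n$ satisfies $w\le_{\rm lex}P_N$: letting $p>n$ be the first coordinate where $w$ disagrees with $P_N$ and writing $p=mk+r$ with $1\le r\le k$ and $m\ge 1$ (possible since $p>n\ge k$), the agreement of $w$ and $P_N$ on the first $p-1$ coordinates shows that $\sigma^{mk}w$ begins with $(\omega_1,\ldots,\omega_{r-1},w_p,\ldots)$ while $w|_{N-mk}$ begins with $(\omega_1,\ldots,\omega_{r-1},\omega_r,\ldots)$, so self--admissibility of $w$ forces $w_p\le\omega_r=(P_N)_p$, whence $w<_{\rm lex}P_N$. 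Then the conclusion is a routine limiting argument: the functions $g_N(x):=\sum_{i=1}^{N}(P_N)_i/x^i$ are strictly decreasing and increase pointwise, as $N\to\infty$, to $g_\infty(x):=\sum_{i=1}^{\infty}((\omega_1,\ldots,\omega_k)^\infty)_i/x^i=\frac{\omega_1x^{k-1}+\cdots+\omega_k}{x^k-1}$; hence $\beta_N$ is increasing and bounded above by $\beta^\ast$ and converges to the unique root of $g_\infty(x)=1$, which rearranges to $1=\frac{\omega_1}{x}+\cdots+\frac{\omega_k+1}{x^k}$, i.e.\ to $\beta^\ast$. Therefore $\overline{\beta}(\omega)=\beta^\ast$ and $\varepsilon(1,\overline{\beta}(\omega))=(\omega_1,\ldots,\omega_k+1,0^\infty)$.

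I expect the maximality half of the claim $w^{(N)}=P_N$ to be the main obstacle, since this is precisely where the recurrence--time hypothesis $\tau(\omega)=k$ has to be used in an essential way: one must know that the prefix $\omega|_n$ has period $k$ and no shorter period, which is exactly what makes the periodic continuation the lexicographically largest admissible extension. The remaining ingredients --- self--admissibility of $\gamma$ and of the truncations $P_N$, and the monotone passage to the limit --- are comparatively routine.
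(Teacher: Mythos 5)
The paper does not actually prove Lemma \ref{max}: it is imported from \cite{LP} with an attribution and no argument, so there is no internal proof to compare yours against. Your proposal is a correct, self-contained derivation from the paper's stated preliminaries, and it follows what is essentially the standard route of \cite{LP}: identify the lexicographically maximal self-admissible word of length $N$ extending $(\omega_1,\ldots,\omega_n)$ as the length-$N$ truncation $P_N$ of $(\omega_1,\ldots,\omega_k)^\infty$, then pass to the limit in the equations of Lemma \ref{S}. The two load-bearing steps are exactly the ones you isolate and both check out: (i) self-admissibility of $P_N$, obtained by recognizing $(\omega_1,\ldots,\omega_k)^\infty$ as $\varepsilon^\ast(1,\beta^\ast)$ for the simple Parry number $\beta^\ast$ attached to the self-admissible word $(\omega_1,\ldots,\omega_k+1)$ (via Lemma \ref{re}, Lemma \ref{nextword}(1), Theorem \ref{P2}, Theorem \ref{P1}(2), and the paper's definition of $\varepsilon^\ast$ for simple Parry numbers); and (ii) maximality, where writing the first disagreement as $p=mk+r$ and using $\sigma^{mk}w\leq_{\rm lex}w|_{N-mk}$ correctly forces $w_p\leq\omega_r$, hence $w_p<\omega_r$, because $w$ and $P_N$ agree on the first $p-1$ coordinates and $P_N$ is $k$-periodic. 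The limiting step is indeed routine once one notes $\beta_N\geq\beta_{N_0}>1$ for a fixed $N_0$, so that $g_\infty$ converges at the limit point and its unique root in $(1,\infty)$ is $\beta^\ast$. Excluding $\omega=0^n$ is the right move, since that word gives an empty cylinder. I see no gaps.
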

\begin{corollary}\label{dis}
Let $\omega=(\omega_1,\ldots,\omega_n)\in \Lambda_n$. Write $\omega^+$ as the next word of $\omega$ in $\Lambda_n$, then we have $\underline{\beta}(\omega^+)=\overline{\beta}(\omega)$.
\end{corollary}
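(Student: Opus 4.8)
The plan is to use Lemma \ref{max} to identify $\overline{\beta}(\omega)$ as the number whose $\beta$-expansion of $1$ equals $(\omega_1,\ldots,\omega_k+1,0^\infty)$ where $k=\tau(\omega)$, and then to use Lemma \ref{nextword} to write $\omega^+$ explicitly and compute $\underline{\beta}(\omega^+)$ from Lemma \ref{S}. First I would split into the two cases given in Lemma \ref{nextword}. If $\omega$ is non-recurrent, then $\tau(\omega)=n=k$, so Lemma \ref{max} gives $\varepsilon(1,\overline{\beta}(\omega))=(\omega_1,\ldots,\omega_n+1,0^\infty)$; meanwhile Lemma \ref{nextword}(1) gives $\omega^+=(\omega_1,\ldots,\omega_n+1)$. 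By Lemma \ref{S}, $\underline{\beta}(\omega^+)$ is the unique solution $x>1$ of $1=\sum_{j=1}^{n}\omega_j x^{-j}+(\omega_n+1)x^{-n}$, which is exactly the equation satisfied by $\overline{\beta}(\omega)$ since its $\beta$-expansion of $1$ terminates after the digit $\omega_n+1$. Hence $\underline{\beta}(\omega^+)=\overline{\beta}(\omega)$ in this case.

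In the recurrent case $\tau(\omega)=k<n$, Lemma \ref{nextword}(2) gives $\omega^+=(\omega_1,\ldots,\omega_k+1,0^{n-k})$. By Lemma \ref{S}, $\underline{\beta}(\omega^+)$ is the unique $x>1$ with $1=\sum_{j=1}^{k}\omega_j x^{-j}+(\omega_k+1)x^{-k}+\sum_{j=k+1}^{n}0\cdot x^{-j}$, i.e. $1=\sum_{j=1}^{k-1}\omega_j x^{-j}+(\omega_k+1)x^{-k}$. On the other hand, Lemma \ref{max} tells us $\varepsilon(1,\overline{\beta}(\omega))=(\omega_1,\ldots,\omega_k+1,0^\infty)$, which means $\overline{\beta}(\omega)$ is precisely the solution of this same equation. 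Therefore $\underline{\beta}(\omega^+)=\overline{\beta}(\omega)$ here as well. One small point to check is that the word $(\omega_1,\ldots,\omega_k+1,0^{n-k})$ is indeed in $\Lambda_n$ so that $I^P_n(\omega^+)$ is defined; this is part of the proof of Lemma \ref{nextword}(2) (there we showed $(\omega_1,\ldots,\omega_k+1)$ is self-admissible, hence so is its extension by zeros), so it may be invoked directly.

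The only genuine subtlety — and the step I would treat most carefully — is the consistency between the two descriptions of $\overline{\beta}(\omega)$: Lemma \ref{S} characterizes it as a limit of solutions of truncated equations built from the maximal continuation of $(\omega_1,\ldots,\omega_n)$, while Lemma \ref{max} asserts its expansion of $1$ is the eventually-zero sequence $(\omega_1,\ldots,\omega_k+1,0^\infty)$. I would note that these are reconciled exactly by Lemma \ref{max}, which is quoted from \cite{LP}, so that $\overline{\beta}(\omega)$ satisfies the finite polynomial equation $1=\sum_{j=1}^{k-1}\omega_j x^{-j}+(\omega_k+1)x^{-k}$ and is the unique root of it in $(1,+\infty)$ (uniqueness of such roots being the content of the first part of Lemma \ref{S}). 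Matching this equation against the one defining $\underline{\beta}(\omega^+)$ then closes the argument in both cases, and since uniqueness holds on both sides the two numbers must coincide. No nontrivial estimation is needed; the corollary is essentially a bookkeeping consequence of Lemmas \ref{nextword}, \ref{max}, and \ref{S}.
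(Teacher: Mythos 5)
Your argument is correct and is exactly the paper's intended proof: the paper simply states that the corollary is ``a direct result from Lemma \ref{nextword} and Lemma \ref{max}'', and you have filled in precisely that bookkeeping via Lemma \ref{S}. The only blemish is notational: in your displayed equations the sums $\sum_{j=1}^{n}\omega_j x^{-j}+(\omega_n+1)x^{-n}$ and $\sum_{j=1}^{k}\omega_j x^{-j}+(\omega_k+1)x^{-k}$ double-count the last digit and should read $\sum_{j=1}^{n-1}$ and $\sum_{j=1}^{k-1}$ respectively (as your subsequent ``i.e.'' line correctly has it), which does not affect the substance.
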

\begin{proof}
It is a direct result from Lemma \ref{nextword} and Lemma \ref{max} .
\end{proof}

Now we discuss the property of the solutions of a self-admissible word as the following result. For any $n \geq 1$ and self-admissible word $\omega=(\omega_1,\ldots,\omega_n)\in \Lambda_n$, let $\beta(\omega) \geq 1$ be the unique positive solution of the equation $1=\sum\limits^n_{i=1}\frac{\omega_i}{x^i}.$ We soon establish the following lemma.
\begin{lemma}\label{SO}
Let $\omega=(\omega_1,\ldots,\omega_n)$ and  $\omega'=(\omega'_1,\ldots,\omega'_n)$ be self-admissible, if $\omega <_{\rm{lex}}\omega'$, then $\beta(\omega)< \beta(\omega')$.
\end{lemma}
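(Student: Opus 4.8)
The plan is to show that the function $f_\omega(x) = \sum_{i=1}^n \omega_i x^{-i}$, evaluated at a common point, separates the two words, and then to use monotonicity of $f_\omega$ in $x$ to transfer this separation to the roots. First I would note that for a self-admissible word $\omega$, the associated function $f_\omega:[1,+\infty)\to\R$ is continuous and strictly decreasing on $(1,+\infty)$ (each term $x^{-i}$ is strictly decreasing, provided not all $\omega_i$ vanish, which holds since $\omega\in\Lambda_n$ forces $\omega_1\geq 1$), with $f_\omega(1)=\sum_{i=1}^n\omega_i\geq 1$ and $f_\omega(x)\to 0$ as $x\to+\infty$. Hence $\beta(\omega)$ is indeed the unique solution in $[1,+\infty)$ of $f_\omega(x)=1$, and likewise for $\omega'$.

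Next I would compare $f_\omega$ and $f_{\omega'}$ pointwise on $(1,+\infty)$. Since $\omega<_{\rm lex}\omega'$, there is a smallest index $k$ with $1\le k\le n$ such that $\omega_j=\omega'_j$ for $j<k$ and $\omega_k<\omega'_k$, i.e.\ $\omega'_k\geq\omega_k+1$. Then for any $x>1$,
$$
f_{\omega'}(x)-f_\omega(x)=\sum_{i=k}^{n}\frac{\omega'_i-\omega_i}{x^i}
=\frac{\omega'_k-\omega_k}{x^k}+\sum_{i=k+1}^{n}\frac{\omega'_i-\omega_i}{x^i}
\geq \frac{1}{x^k}-\sum_{i=k+1}^{n}\frac{\omega_i}{x^i}.
$$
The key estimate is that this last quantity is strictly positive at $x=\beta(\omega)$. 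To see this, I would use the self-admissibility of $\omega$ together with Parry's criterion (Theorem \ref{P1}): since $(\omega_1,\dots,\omega_n)$ is self-admissible, in particular $\sigma^k\omega\le_{\rm lex}(\omega_1,\dots,\omega_{n-k})$, so $(\omega_{k+1},\dots,\omega_n)\le_{\rm lex}(\omega_1,\dots,\omega_{n-k})$. Because the digit sequence $(\omega_1,\dots,\omega_{n-k},0^\infty)$ is itself $\le_{\rm lex}$ the true infinite $\beta(\omega)$-expansion of $1$, one gets the strict inequality $\sum_{i=k+1}^{n}\omega_i\,\beta(\omega)^{-i} < \sum_{i=1}^{n-k}\omega_i\,\beta(\omega)^{-i}\cdot\beta(\omega)^{-k} \le \beta(\omega)^{-k}\cdot 1 = \beta(\omega)^{-k}$, where the middle step shifts indices and the last uses $f_\omega(\beta(\omega))=1$; a small amount of care is needed to make the first inequality strict rather than merely $\le$, which is where I expect to do the only genuine work. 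Consequently $f_{\omega'}(\beta(\omega)) > f_\omega(\beta(\omega)) = 1$.

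Finally, since $f_{\omega'}$ is strictly decreasing on $(1,+\infty)$, continuous, tends to $0$ at infinity, and satisfies $f_{\omega'}(\beta(\omega))>1=f_{\omega'}(\beta(\omega'))$, we conclude $\beta(\omega')>\beta(\omega)$, as desired. (If $\beta(\omega)=1$ the argument degenerates: then $f_\omega(1)=1$ forces $\omega=(1,0,\dots,0)$, and any lexicographically larger self-admissible word has digit sum $>1$, so $\beta(\omega')>1$ trivially; I would dispatch this edge case separately.) The main obstacle is thus the strictness in the self-admissibility estimate at the point $x=\beta(\omega)$; everything else is monotonicity bookkeeping.
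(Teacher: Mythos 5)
Your approach is the mirror image of the paper's: both proofs compare the two power sums at a single point and then use strict monotonicity of $x\mapsto\sum_i\omega_ix^{-i}$ to transfer the separation to the roots. The paper evaluates both words at $x=\beta(\omega')$ and simply asserts $\sum_i\omega'_i\beta(\omega')^{-i}>\sum_i\omega_i\beta(\omega')^{-i}$ with no justification, while you evaluate at $x=\beta(\omega)$ and make explicit that the whole content of the lemma is the estimate $\sum_{i=k+1}^{n}\omega_i\,\beta(\omega)^{-i}<\beta(\omega)^{-k}$ (with $k$ the first index of disagreement). That diagnosis is exactly right, and your choice of evaluation point is in fact the easier one to justify, because by Lemma \ref{S} the cylinder $I^P_n(\omega)$ contains its left endpoint $\underline{\beta}(\omega)=\beta(\omega)$ (when $\beta(\omega)>1$), so $\omega$ is literally a prefix of $\varepsilon(1,\beta(\omega))$; at $\beta(\omega')$ one would instead have to compare $\omega$ against a \emph{different} word's expansion, which needs an extra lexicographic-to-numerical lemma.

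The one genuine gap is the strictness you flag and then defer. It can be closed more cheaply than your sketch suggests, and without invoking that lexicographic order implies numerical order (which is false for arbitrary digit strings and itself requires proof for admissible ones). Write $\beta=\beta(\omega)$ and assume $k<n$ (if $k=n$ the tail sum is empty and there is nothing to prove). Since $\omega_i=\varepsilon_i(1,\beta)$ for $1\le i\le n$, the identity $1=\sum_{i=1}^{k}\varepsilon_i(1,\beta)\beta^{-i}+\beta^{-k}T_\beta^k1$ from (\ref{1.1}) gives
$$\sum_{i=k+1}^{n}\omega_i\,\beta^{-i}\;\le\;\sum_{j=1}^{\infty}\varepsilon_{k+j}(1,\beta)\,\beta^{-j-k}\;=\;\beta^{-k}\,T_\beta^k1\;<\;\beta^{-k},$$
where the first inequality holds because one is only discarding nonnegative terms, and the last is strict because $T_\beta$ maps into $[0,1)$. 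This replaces your chain through $(\omega_{k+1},\dots,\omega_n)\le_{\rm lex}(\omega_1,\dots,\omega_{n-k})$ entirely and yields $f_{\omega'}(\beta(\omega))>1$ as you want. With that substitution, and your separate treatment of the degenerate case $\omega=(1,0,\dots,0)$, the argument is complete and is, if anything, more rigorous than the paper's one-line version.
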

\begin{proof}
If $\omega <_{\rm{lex}}\omega'$, then
$$1 = \frac{\omega_1}{\beta(\omega)}+\cdots+\frac{\omega_n}{\beta(\omega)^n} =\frac{\omega'_1}{\beta(\omega')}+\cdots+\frac{\omega'_n}{\beta(\omega')^n} >\frac{\omega_1}{\beta(\omega')}+\cdots+ \frac{\omega_n}{\beta(\omega')^n}.$$
Therefore, $\beta(\omega)< \beta(\omega')$.
\end{proof}

From what has been discussed above, we now give the proof of Theorem \ref{LIMIT}.\\
\textbf{Proof of Theorem \ref{LIMIT}}
(1) We first prove that $\underline{\beta}_n$ is increasing as $n$ increases and $\lim\limits_{n\rightarrow\infty}\underline{\beta}_n=\beta.$ By Lemma \ref{S}, we have  $\underline{\beta}_n$ is the only solution in $[1,+\infty)$ of the equation
$$1=\frac{\varepsilon_1(1,\beta)}{x}+\cdots+\frac{\varepsilon_n(1,\beta)}{x^n}.$$ Since $(\varepsilon_1(1,\beta),\cdots,\varepsilon_n(1,\beta))$ is increasing as $n$ increases, therefore, we obtain that $\underline{\beta}_n$ is increasing as $n \rightarrow \infty$ by Lemma \ref{SO}, and since $$1=\frac{\varepsilon_1(1,\beta)}{\beta}+\cdots+\frac{\varepsilon_n(1,\beta)}{\beta^n}+\cdots,$$
we have $\lim\limits_{n\rightarrow\infty}\underline{\beta}_n=\beta.$

(2) Now we show that $\overline{\beta}_n$ is decreasing when $n$ increases and $\lim\limits_{n\rightarrow\infty}\overline{\beta}_n=\beta.$ By Lemma \ref{max}, we have $$\varepsilon(1,\overline{\beta}_n)=(\varepsilon_1(1,\beta),\ldots,\varepsilon_{\tau_n}(1,\beta)+1,0^\infty)$$  and $$\varepsilon(1,\overline{\beta}_{n+1})=(\varepsilon_1(1,\beta),\ldots,\varepsilon_{\tau_{n+1}}(1,\beta)+1,0^\infty).$$ Besides, Lemma \ref{rec} gives $\tau_n(\beta)\leq\tau_{n+1}(\beta)$, that is, $\varepsilon_{\tau_n(\beta)}(1,\overline{\beta}_n)> \varepsilon_{\tau_n(\beta)}(1,\overline{\beta}_{n+1})$ when $\tau_n(\beta)<\tau_{n+1}(\beta)$ and $\varepsilon_{\tau_n(\beta)}(1,\overline{\beta}_n)= \varepsilon_{\tau_n(\beta)}(1,\overline{\beta}_{n+1})$ when $\tau_n(\beta)=\tau_{n+1}(\beta)$. So we have $\varepsilon(1,\overline{\beta}_n)\geq \varepsilon(1,\overline{\beta}_{n+1})$. It follows from Lemma \ref{SO} that $\overline{\beta}_n \geq \overline{\beta}_{n+1}$, that is, $\overline{\beta}_n$ is decreasing when $n$ increases.

Note that $|I^P_n(\beta)|\leq \overline{\beta}_n^{-n+1}\rightarrow 0$ as $n\rightarrow +\infty$, so $$\lim\limits_{n\rightarrow\infty}\overline{\beta}_n=\lim\limits_{n\rightarrow\infty}\underline{\beta}_n=\beta.$$
$\hfill\Box$

Fix $\beta>1$. Let
\begin{equation}\label{tn}
t_n(\beta):= n-\left \lfloor\frac{n}{\tau_n(\beta)}\right \rfloor\tau_n(\beta).
\end{equation}
where $\tau_n(\beta)$ is the $n$th recurrence time of $\beta$ defined as (\ref{taun}). The length of $n$th cylinders in parameter space is also considered by B. Li et al.\ \cite{LP}, the following result provides the estimation of the the lower bound of $|I^P_n(\beta)|$.
\begin{lemma}[B. Li et al.\ \cite{LP}]\label{EST}
Let $\beta>1$. Then, we have
$$|I^P_n(\beta)|\geq \left\{
\begin{aligned}
C_n(\beta)\overline{\beta}_n^{-n}\ & , & when\ t_n(\beta)=0; \\
C_n(\beta)\overline{\beta}_n^{-n}\left(\frac{\varepsilon_{t_n(\beta)+1}(1,\beta)}{\overline{\beta}_n}+ \cdots+\frac{\varepsilon_{\tau_n(\beta)}(1,\beta)+1}{\overline{\beta}_n^{\tau_n(\beta)-t_n(\beta)}}\right) & , & otherwise,
\end{aligned}
\right.$$where
\begin{equation}\label{cn}
C_n(\beta):=\frac{(\underline{\beta}_n-1)^2}{\underline{\beta}_n}.
\end{equation}
\end{lemma}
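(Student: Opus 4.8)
The plan is to start from the exact characterizations of the two endpoints $\underline{\beta}_n$ and $\overline{\beta}_n$ given by Lemma \ref{S} and Lemma \ref{max}, and to estimate the difference $\overline{\beta}_n-\underline{\beta}_n$ from below by comparing the two defining polynomial equations at a common point. Write $k=\tau_n(\beta)$ and $(\varepsilon_1,\ldots,\varepsilon_n)=(\varepsilon_1(1,\beta),\ldots,\varepsilon_n(1,\beta))$. By Lemma \ref{max}, $\varepsilon(1,\overline{\beta}_n)=(\varepsilon_1,\ldots,\varepsilon_k+1,0^\infty)$, so $\overline{\beta}_n$ solves $1=\sum_{i=1}^{k-1}\varepsilon_i x^{-i}+(\varepsilon_k+1)x^{-k}$; meanwhile $\underline{\beta}_n$ solves $1=\sum_{i=1}^{n}\varepsilon_i x^{-i}$, and by the Remark on $t(\omega)$ the block $(\varepsilon_1,\ldots,\varepsilon_n)$ is the periodic pattern $(\varepsilon_1,\ldots,\varepsilon_k)^{\lfloor n/k\rfloor}$ followed by $(\varepsilon_1,\ldots,\varepsilon_{t_n})$. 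The first step is therefore to define $f(x)=1-\sum_{i=1}^n\varepsilon_i x^{-i}$ (so $f(\underline{\beta}_n)=0$) and evaluate $f(\overline{\beta}_n)$: using the periodic structure and the geometric-series identity, $\sum_{i=1}^n\varepsilon_i\overline{\beta}_n^{-i}$ telescopes against the relation $1=\sum_{i=1}^{k-1}\varepsilon_i\overline{\beta}_n^{-i}+(\varepsilon_k+1)\overline{\beta}_n^{-k}$, and one finds $f(\overline{\beta}_n)$ equals, up to a factor $\overline{\beta}_n^{-\lfloor n/k\rfloor k}$, either $-\overline{\beta}_n^{-k}$ (when $t_n=0$) or $-\overline{\beta}_n^{-k}+\sum_{i=t_n+1}^{k}\varepsilon_i\overline{\beta}_n^{-i}$ plus the extra $\overline{\beta}_n^{-k}$ coming from the $(\varepsilon_k+1)$ term — this is exactly where the bracketed expression $\bigl(\varepsilon_{t_n+1}/\overline{\beta}_n+\cdots+(\varepsilon_k+1)/\overline{\beta}_n^{\,k-t_n}\bigr)$ in the statement materialises.

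The second step is a mean-value / convexity estimate converting $|f(\overline{\beta}_n)|=|f(\overline{\beta}_n)-f(\underline{\beta}_n)|$ into a lower bound for $\overline{\beta}_n-\underline{\beta}_n=|I^P_n(\beta)|$. Since $f'(x)=\sum_{i=1}^n i\varepsilon_i x^{-i-1}$ is positive and one needs an \emph{upper} bound on $f'$ on the interval $[\underline{\beta}_n,\overline{\beta}_n]$, I would bound $f'(x)\le f'(\underline{\beta}_n)$ and then bound $f'(\underline{\beta}_n)$ in terms of $\underline{\beta}_n$ alone; the standard trick here (this is how $C_n(\beta)$ appears) is that $\sum_{i\ge1} i\,\varepsilon_i \underline{\beta}_n^{-i-1}\le \underline{\beta}_n \sum_{i\ge1} i\underline{\beta}_n^{-i}\cdot(\text{something})$, more precisely one uses $\varepsilon_i\le\lfloor\beta\rfloor\le\beta$ together with $1=\sum\varepsilon_i\underline\beta_n^{-i}$ to get $f'(\underline\beta_n)\le \underline\beta_n/(\underline\beta_n-1)\cdot\frac{1}{\underline\beta_n-1}=\frac{\underline\beta_n}{(\underline\beta_n-1)^2}=1/C_n(\beta)$, via the identity $\sum_{i\ge1} i z^{i} = z/(1-z)^2$ with $z=1/\underline\beta_n$. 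Hence $|I^P_n(\beta)|=\overline\beta_n-\underline\beta_n\ge |f(\overline\beta_n)|/\sup f' \ge C_n(\beta)\,|f(\overline\beta_n)|$, and substituting the expression for $f(\overline\beta_n)$ from the first step yields the two cases of the claim, after absorbing the harmless positive power $\overline\beta_n^{-\lfloor n/k\rfloor k}$ into $\overline\beta_n^{-n}$ using $\lfloor n/k\rfloor k = n-t_n$ (so in the $t_n=0$ case one gets exactly $C_n(\beta)\overline\beta_n^{-n}$, and otherwise $C_n(\beta)\overline\beta_n^{-n}$ times the bracket, since the bracket already carries the compensating positive powers $\overline\beta_n^{-(k-t_n)}$ down to $\overline\beta_n^{-1}$ matched against $\overline\beta_n^{-(n-t_n)}=\overline\beta_n^{-\lfloor n/k\rfloor k}$).

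The main obstacle I anticipate is the bookkeeping in the first step: correctly carrying out the geometric summation for the periodic block and showing that the leftover terms assemble precisely into $-\overline\beta_n^{-k}$ (respectively into the stated bracket), including getting the sign right so that $f(\overline\beta_n)<0$, which is what guarantees $\overline\beta_n>\underline\beta_n$ and makes the estimate meaningful. A secondary technical point is justifying the uniform bound $f'(x)\le 1/C_n(\beta)$ on the whole interval rather than just at a point — monotonicity of $f'$ handles this — and making sure the crude digit bound $\varepsilon_i\le\lfloor\beta\rfloor$ is enough, which it is because the geometric series $\sum i z^i$ converges and the normalization $\sum\varepsilon_i z^i=1$ pins down the scale. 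None of these steps requires anything beyond the lemmas already established (Lemma \ref{S}, Lemma \ref{max}, the Remark on $t(\omega)$) plus elementary calculus, so the proof is essentially a careful computation organised around the single inequality $|I^P_n(\beta)|\ge C_n(\beta)|f(\overline\beta_n)|$.
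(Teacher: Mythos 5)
The paper does not prove this lemma at all --- it is quoted verbatim from B.~Li, Persson, Wang and Wu \cite{LP} --- so there is no in-paper argument to compare against; your proposal is essentially the argument of \cite{LP} and it is sound. Setting $f(x)=1-\sum_{i=1}^n\varepsilon_i x^{-i}$, the telescoping you describe does work: writing $k=\tau_n(\beta)$, $m=\lfloor n/k\rfloor$ and using $\sum_{i=1}^k\varepsilon_i\overline{\beta}_n^{-i}=1-\overline{\beta}_n^{-k}$, one gets $\sum_{i=1}^n\varepsilon_i\overline{\beta}_n^{-i}=1-\overline{\beta}_n^{-mk}+\overline{\beta}_n^{-mk}\sum_{i=1}^{t_n}\varepsilon_i\overline{\beta}_n^{-i}$, hence $f(\overline{\beta}_n)=\overline{\beta}_n^{-n}$ when $t_n=0$ and $f(\overline{\beta}_n)=\overline{\beta}_n^{-n}\bigl(\varepsilon_{t_n+1}/\overline{\beta}_n+\cdots+(\varepsilon_k+1)/\overline{\beta}_n^{\,k-t_n}\bigr)$ otherwise, and the mean value theorem with $f'$ decreasing gives $|I_n^P(\beta)|\geq f(\overline{\beta}_n)/f'(\underline{\beta}_n)$. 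Two details in your sketch need correcting, though neither threatens the argument. First, the sign: $f$ is \emph{increasing}, so $f(\overline{\beta}_n)>0$ (your intermediate claim that it equals $-\overline{\beta}_n^{-k}$ up to a factor, and your later remark that one must check $f(\overline{\beta}_n)<0$, are both backwards; the correct positive values are the ones displayed above, which is what makes the lower bound nontrivial). Second, the bound $f'(\underline{\beta}_n)\leq\underline{\beta}_n/(\underline{\beta}_n-1)^2$ requires $\varepsilon_i\leq\underline{\beta}_n$, not merely $\varepsilon_i\leq\beta$ as you invoke (the latter would produce $\beta$ in place of $\underline{\beta}_n$ in the numerator and hence a different constant than $1/C_n(\beta)$); the fix is immediate, since $\varepsilon_i\leq\varepsilon_1=\lfloor\beta\rfloor=\underline{\beta}_1\leq\underline{\beta}_n$, whence $f'(\underline{\beta}_n)\leq\underline{\beta}_n\sum_{i\geq1}i\,\underline{\beta}_n^{-i-1}=\underline{\beta}_n/(\underline{\beta}_n-1)^2=1/C_n(\beta)$. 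With these repairs the computation closes exactly as you outline.
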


\subsection{The upper and lower density in parameter space}
Recall the definition of $\tau_n(\beta)$(\ref{taun}) and $t_n(\beta)$(\ref{tn}). We denote $\tau(\beta):= \limsup\limits_{n\rightarrow \infty}\frac{\tau_n(\beta)-t_n(\beta)}{n}.$ Now we give the evaluation of $\overline{D}(\beta)$ and $\underline{D}(\beta)$ as follows,
\begin{lemma}\label{EVA}
Let $\beta>1$ be a real number. Then,
 $$\underline{D}(\beta)=1$$
 and
 $$\overline{D}(\beta)\leq 1+\tau(\beta).$$
\end{lemma}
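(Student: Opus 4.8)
The plan is to extract both bounds from the two-sided estimate of $|I^P_n(\beta)|$ provided by Lemma \ref{S} (upper bound) and Lemma \ref{EST} (lower bound), after taking $-\log_\beta(\cdot)/n$ and passing to the limit. First I would record the elementary fact that $\overline{\beta}_n\to\beta$ and $\underline{\beta}_n\to\beta$ (Theorem \ref{LIMIT}), so that $\log_\beta\overline{\beta}_n\to 1$, $\log_\beta\underline{\beta}_n\to 1$, and the prefactor $C_n(\beta)=(\underline{\beta}_n-1)^2/\underline{\beta}_n$ satisfies $\frac{1}{n}\log_\beta C_n(\beta)\to 0$ (since $C_n(\beta)$ converges to the positive constant $(\beta-1)^2/\beta$). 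These observations reduce every logarithm of a quantity that is bounded above and below by fixed positive constants to a $o(n)$ term, so only genuinely exponential factors survive after dividing by $n$.

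For $\underline{D}(\beta)=1$: from Lemma \ref{S}, $|I^P_n(\beta)|\le \overline{\beta}_n^{-n+1}$, hence $\frac{-\log_\beta|I^P_n(\beta)|}{n}\ge \frac{n-1}{n}\log_\beta\overline{\beta}_n\to 1$, giving $\underline{D}(\beta)\ge 1$. For the reverse inequality I would use Lemma \ref{EST} along a subsequence where $t_n(\beta)=0$, which by the first remark after the definition of $\tau(\omega)$ happens exactly when $\tau_n(\beta)\mid n$; since $\tau_n(\beta)\to\infty$ and is nondecreasing (Lemma \ref{rec}), infinitely many such $n$ exist (e.g.\ take $n$ in the range where $\tau_n(\beta)$ has stabilized at a value $k$ and let $n$ run through multiples of $k$). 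Along that subsequence $|I^P_n(\beta)|\ge C_n(\beta)\overline{\beta}_n^{-n}$, so $\frac{-\log_\beta|I^P_n(\beta)|}{n}\le \log_\beta\overline{\beta}_n - \frac{1}{n}\log_\beta C_n(\beta)\to 1$. Hence $\underline{D}(\beta)=\liminf\le 1$, and combined with the lower bound, $\underline{D}(\beta)=1$.

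For $\overline{D}(\beta)\le 1+\tau(\beta)$: again start from the lower bound in Lemma \ref{EST}, this time in the case $t_n(\beta)\ne 0$. The key point is to bound the extra factor $\Theta_n:=\frac{\varepsilon_{t_n(\beta)+1}(1,\beta)}{\overline{\beta}_n}+\cdots+\frac{\varepsilon_{\tau_n(\beta)}(1,\beta)+1}{\overline{\beta}_n^{\tau_n(\beta)-t_n(\beta)}}$ from below; since its last term is $\frac{\varepsilon_{\tau_n(\beta)}(1,\beta)+1}{\overline{\beta}_n^{\tau_n(\beta)-t_n(\beta)}}\ge \overline{\beta}_n^{-(\tau_n(\beta)-t_n(\beta))}$ (the numerator being at least $1$), and all other terms are nonnegative, we get $\Theta_n\ge \overline{\beta}_n^{-(\tau_n(\beta)-t_n(\beta))}$. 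Therefore $|I^P_n(\beta)|\ge C_n(\beta)\,\overline{\beta}_n^{-n-(\tau_n(\beta)-t_n(\beta))}$, so
\begin{equation*}
\frac{-\log_\beta|I^P_n(\beta)|}{n}\le \Bigl(1+\frac{\tau_n(\beta)-t_n(\beta)}{n}\Bigr)\log_\beta\overline{\beta}_n-\frac{1}{n}\log_\beta C_n(\beta).
\end{equation*}
Taking $\limsup_{n\to\infty}$, using $\log_\beta\overline{\beta}_n\to 1$, $\frac1n\log_\beta C_n(\beta)\to 0$, and the definition $\tau(\beta)=\limsup_n\frac{\tau_n(\beta)-t_n(\beta)}{n}$, yields $\overline{D}(\beta)\le 1+\tau(\beta)$. (A small technical point to handle carefully: the unified bound $|I^P_n(\beta)|\ge C_n(\beta)\overline{\beta}_n^{-n-(\tau_n(\beta)-t_n(\beta))}$ also covers the case $t_n(\beta)=0$, since then $\tau_n(\beta)-t_n(\beta)=\tau_n(\beta)\ge 0$ makes the exponent only more negative, so no case distinction is actually needed when passing to the $\limsup$.)

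The main obstacle is the lower-bound direction for $\underline{D}(\beta)$, namely justifying that the subsequence with $t_n(\beta)=0$ is infinite and controlling the error terms uniformly along it; once one is comfortable that $\tau_n(\beta)$ is nondecreasing and tends to infinity (Lemma \ref{rec}), this is straightforward, but it is the only place where one cannot argue for all $n$ at once. The upper-bound direction for $\overline{D}(\beta)$ is essentially bookkeeping with the explicit estimate of Lemma \ref{EST}, the only subtlety being to notice that bounding $\Theta_n$ by its last term suffices and that $C_n(\beta)$ contributes nothing in the limit.
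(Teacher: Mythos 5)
Your overall strategy coincides with the paper's: $\underline{D}(\beta)\ge 1$ from the upper bound in Lemma \ref{S}, $\overline{D}(\beta)\le 1+\tau(\beta)$ from the second branch of Lemma \ref{EST} by keeping only the last summand of the bracketed sum (your bound $\Theta_n\ge\overline{\beta}_n^{-(\tau_n(\beta)-t_n(\beta))}$ is exactly the paper's step), and $\underline{D}(\beta)\le 1$ from the first branch of Lemma \ref{EST} along a subsequence with $t_n(\beta)=0$. The computations in these steps, and your observation that the weakened unified lower bound is harmless when passing to the $\limsup$, are all correct.

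The one genuine gap is your justification that infinitely many $n$ satisfy $t_n(\beta)=0$, i.e.\ $\tau_n(\beta)\mid n$. Monotonicity and divergence of $\tau_n(\beta)$ (Lemma \ref{rec}) do not by themselves guarantee that the interval of indices on which $\tau_n(\beta)$ equals a given value $k$ contains a multiple of $k$: a priori that set could be $\{n_0,\dots,n_1\}$ with $k<n_0\le n_1<2k$, in which case ``letting $n$ run through multiples of $k$'' produces nothing. What rescues the argument is Lemma \ref{re}: if $\tau_{n_0}(\beta)=k$ for some $n_0$, then the prefix $(\varepsilon_1(1,\beta),\dots,\varepsilon_k(1,\beta))$ is non-recurrent, so $\tau_k(\beta)=k$, and by monotonicity the set $\{n:\tau_n(\beta)=k\}$ is an interval beginning at $n=k$; thus $n=k$ itself is the multiple you need, and since $\tau_n(\beta)\to\infty$ infinitely many values $k$ are attained. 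You must cite Lemma \ref{re} (or argue as the paper does, constructing $n_j=\inf\{n:\tau_n(\beta)>\tau_{n_{j-1}}(\beta)\}$ and verifying $\tau_{n_j}(\beta)=n_j$ via Lemma \ref{non}); as written, the parenthetical appeal to Lemma \ref{rec} alone does not establish the claim, even though it is the step you yourself single out as the main obstacle.
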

\begin{proof}(1) By Lemma \ref{S}, we have $|I^P_n(\beta)|\leq \overline{\beta}_n^{-n+1}$ for all $n\geq 1$. Consequently,  $\underline{D}(\beta)\geq 1$ by Theorem \ref{LIMIT}.

Now we find a sequence $\{n_k\}_{k\geq1}$ such that $\underline{D}(\beta)=1$. Since $\tau_n(\beta)\rightarrow\infty$ as $n\rightarrow \infty$ (Lemma \ref{rec}). We can choose an increasing sequence $\{n_k\}_{k\geq1}$ such that $\tau_{n_k}=n_k$.

In fact, assume that $\varepsilon(1,\beta)=(\varepsilon_1,\ldots,\varepsilon_n,\ldots)$, let $n_1=1$, and after that we recursively choose $$n_k=\inf \{n: \tau(\varepsilon_1,\ldots,\varepsilon_n)>\tau_{n_{k-1}}\}.$$ Then for every $k\geq 1$, let $n_k=\ell_k \tau_{n_{k-1}}+t_{n_k}$ where $0<t_{n_k}\leq \tau_{n_{k-1}}$, so by the definition of $\tau_{n_k}$, we have
\begin{equation}\label{e1}
(\varepsilon_1,\ldots,\varepsilon_{n_k-1})= \left((\varepsilon_1,\ldots,\varepsilon_{\tau_{n_{k-1}}})^{\ell_k},\varepsilon_1,\ldots,\varepsilon_{t_{n_k}-1}\right).
\end{equation}
Furthermore, the self-admissibility of $\varepsilon(1,\beta)$ gives $$(\varepsilon_1,\ldots,\varepsilon_{n_k})\leq_{\rm lex} \left((\varepsilon_1,\ldots,\varepsilon_{\tau_{n_{k-1}}})^{\ell_k},\varepsilon_1,\ldots,\varepsilon_{t_{n_k}-1},\varepsilon_{t_{n_k}}\right).$$
But the definition of $n_k$ provides that $$(\varepsilon_1,\ldots,\varepsilon_{n_k})\neq \left((\varepsilon_1,\ldots,\varepsilon_{\tau_{n_{k-1}}})^{\ell_k},\varepsilon_1,\ldots,\varepsilon_{t_{n_k}-1},\varepsilon_{t_{n_k}}\right).$$
These mean that
\begin{equation}\label{e2}
(\varepsilon_1,\ldots,\varepsilon_{n_k})<_{\rm lex} \left((\varepsilon_1,\ldots,\varepsilon_{\tau_{n_{k-1}}})^{\ell_k},\varepsilon_1,\ldots,\varepsilon_{t_{n_k}-1},\varepsilon_{t_{n_k}}\right).
\end{equation}
Combining (\ref{e1}) and (\ref{e2}), ones can get from  Lemma \ref{non} that $\tau_{n_k}=n_k$. So Lemma \ref{EST} gives $|I^P_{n_k}(\beta)|\geq C_{n_k}(\beta)\overline{\beta}_{n_k}^{-{n_k}}$, where $C_n(\beta)$ is given as (\ref{cn}), that is,
$$\begin{array}{rcl}
\underline{D}(\beta) &\leq& \lim\limits_{k\rightarrow \infty}\frac{-\log_\beta
|I^P_{n_k}(\beta)|}{n_k} \\
&\leq & \lim\limits_{k\rightarrow \infty}\frac{-\log_\beta
(C_{n_k}(\beta)\overline{\beta}_{n_k}^{-{n_k}})}{n_k}\\&=&1.
\end{array} $$
The last equality is given by Theorem \ref{LIMIT}. Thus, for every $\beta>1$, $\underline{D}(\beta)=1$.

(2) By  Lemma \ref{EST}, we have $$|I^P_n(\beta)| \geq C_n(\beta) \overline{\beta}_n^{-n} \left (\frac{\varepsilon_{t_n(\beta)+1}(1,\beta)}{\overline{\beta}_n} + \cdots + \frac{\varepsilon_{\tau_n(\beta)}(1,\beta)+1}{\overline{\beta}_n^{\tau_n(\beta)-t_n(\beta)}} \right),$$ where $C_n(\beta)$ is defined as (\ref{cn}). Thus, we deduce that, for each $n\geq 1$,
$$|I^P_n(\beta)|\geq C_n(\beta) \overline{\beta}_n^{-n-\tau_n(\beta)+t_n(\beta)}.$$
Therefore, $\overline{D}(\beta)\leq 1+\tau(\beta)$ by Theorem \ref{LIMIT}.
\end{proof}

The following lemma gives an example of $\beta$ which has the property of $\underline{D}(\beta)=\overline{D}(\beta)=1.$
\begin{lemma}
If $\beta$ satisfies that there exits $k\geq 1$ such that for each $n\geq k$, $\tau_n(\beta)=n$, then we have $\underline{D}(\beta)=\overline{D}(\beta)=1.$
\end{lemma}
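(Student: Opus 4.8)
The plan is to reduce everything to the two inequalities already proved in Lemma \ref{EVA}. Recall that $\underline{D}(\beta)=1$ holds for every $\beta>1$ by that lemma, and since a limit superior is never smaller than the corresponding limit inferior we automatically have $\overline{D}(\beta)\geq\underline{D}(\beta)=1$. Hence the whole task is to establish the reverse inequality $\overline{D}(\beta)\leq 1$ under the stated hypothesis; the bound $\overline{D}(\beta)\leq 1+\tau(\beta)$ from Lemma \ref{EVA} is too weak here (it only gives $\overline{D}(\beta)\leq 2$), so I would instead go back to the sharper lower bound for $|I^P_n(\beta)|$ in Lemma \ref{EST}.

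First I would observe that the hypothesis forces the ``good'' case of Lemma \ref{EST} to occur for all large $n$: if $\tau_n(\beta)=n$ then, by the definition (\ref{tn}), $t_n(\beta)=n-\lfloor n/n\rfloor\,n=0$. Thus for every $n\geq k$ we are in the branch $t_n(\beta)=0$ of Lemma \ref{EST}, which yields
$$|I^P_n(\beta)|\geq C_n(\beta)\,\overline{\beta}_n^{-n},\qquad n\geq k,$$
with $C_n(\beta)=(\underline{\beta}_n-1)^2/\underline{\beta}_n$ as in (\ref{cn}). Taking $-\log_\beta$ and dividing by $n$ gives, for $n\geq k$,
$$\frac{-\log_\beta|I^P_n(\beta)|}{n}\ \leq\ \frac{-\log_\beta C_n(\beta)}{n}+\log_\beta\overline{\beta}_n.$$

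Next I would pass to the limit along $n\to\infty$ using Theorem \ref{LIMIT}. That theorem gives $\underline{\beta}_n\uparrow\beta$ and $\overline{\beta}_n\downarrow\beta$, so $\log_\beta\overline{\beta}_n\to 1$, while $C_n(\beta)\to(\beta-1)^2/\beta$, a strictly positive constant (here one uses $\beta>1$, which also guarantees $C_n(\beta)$ is bounded away from $0$ for all large $n$, so the logarithm is harmless); therefore $\frac{-\log_\beta C_n(\beta)}{n}\to 0$. Consequently $\overline{D}(\beta)=\limsup_{n\to\infty}\frac{-\log_\beta|I^P_n(\beta)|}{n}\leq 1$, and combining this with $\overline{D}(\beta)\geq 1$ and $\underline{D}(\beta)=1$ finishes the proof. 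I do not expect a genuine obstacle: the only points requiring a word of care are that the finitely many indices $n<k$ are irrelevant to the $\limsup$, and that $C_n(\beta)$ stays positive and bounded below for large $n$, both of which are immediate from Theorem \ref{LIMIT} and $\beta>1$.
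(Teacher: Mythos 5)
Your proposal is correct and follows essentially the same route as the paper: the key step in both is that $\tau_n(\beta)=n$ forces $t_n(\beta)=0$, so the first branch of Lemma \ref{EST} gives $|I^P_n(\beta)|\geq C_n(\beta)\overline{\beta}_n^{-n}$ for $n\geq k$, which together with the upper bound $|I^P_n(\beta)|\leq\overline{\beta}_n^{-n+1}$ (used by you implicitly via Lemma \ref{EVA}) and Theorem \ref{LIMIT} yields $\underline{D}(\beta)=\overline{D}(\beta)=1$. The only cosmetic difference is that the paper writes both inequalities in a single display rather than quoting Lemma \ref{EVA} for the $\underline{D}(\beta)=1$ half.
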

\begin{proof}
Since for all $n\geq k$, we have $\tau_n(\beta)=n$. It therefore follows from Lemma \ref{S} and Lemma \ref{EST} that
$$\frac{(\underline{\beta}_n-1)^2}{\underline{\beta}_n}\overline{\beta}_n^{-n} \leq |I_n^P(\beta)|\leq \overline{\beta}_n^{-n+1}, $$
for every $n\geq k$ which implies that $\underline{D}(\beta)=\overline{D}(\beta)=1$ by Theorem \ref{LIMIT}.
\end{proof}

\section{Dimension of $\delta$-irregular cylinders}

\subsection{Evaluations of the cardinality of some related sets}
We first introduce some notations which play an important role in estimating upper bound of $\dim_{\rm{H}}I_{\delta}^P$.

Let $\beta_2>\beta_1>1$, for every $n\geq1$, we write $$\Lambda_n(\beta_1,\beta_2):=\{\omega=(\omega_1,\ldots,\omega_n)\in \Lambda_n: \exists\ \beta \in (\beta_1,\beta_2],\ s.t.\ \varepsilon_1(1,\beta)=\omega_1, \ldots, \varepsilon_n(1,\beta)=\omega_n\}.$$ Fixed $1\leq k <n$, for every $\omega\in \Lambda_n(\beta_1,\beta_2)$, let $t(\omega)$ be defined as (\ref{t}). Define
\begin{equation}\label{nk}
\Lambda_{n,k}(\beta_1,\beta_2):= \{\omega=(\omega_1,\ldots,\omega_n)\in \Lambda_n(\beta_1,\beta_2): \omega_{t(\omega)+1}= \cdots= \omega_{t(\omega)+k}=0\}.
\end{equation}

Now we provide an evaluation of the cardinality of the set $\Lambda_{n,k}(\beta_1,\beta_2)$.
\begin{lemma}\label{cardinality}
Let  $\Lambda_{n,k}(\beta_1,\beta_2)$ be a nonempty set defined above. Then we have $$\sharp \Lambda_{n,k}(\beta_1,\beta_2)\leq \frac{\beta_2^{n-k+1}}{\beta_2-1}.$$
\end{lemma}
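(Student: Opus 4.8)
The plan is to map each word $\omega \in \Lambda_{n,k}(\beta_1,\beta_2)$ to a shorter word of length $n-k$ and then count the latter by R\'enyi's cardinality bound (Theorem \ref{cardinity}). The point of the defining condition (\ref{nk}) is that $\omega$ has a block of $k$ consecutive zeros starting at position $t(\omega)+1$; since by Remark (1) the word $\omega$ is completely determined by its prefix $(\omega_1,\dots,\omega_{\tau(\omega)})$ (it is the periodic continuation with period $\tau(\omega)$, truncated at length $n$), and $t(\omega)+1,\dots,t(\omega)+k$ lie in the ``remainder'' part $\omega_1,\dots,\omega_{t(\omega)}$ shifted around, these zeros are not free: they are forced copies of $\omega_1=\cdots=\omega_k=0$. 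First I would make this precise, namely that $\omega \in \Lambda_{n,k}(\beta_1,\beta_2)$ forces $\omega_1=\cdots=\omega_k=0$ whenever $t(\omega)\ge 1$, hence forces the first $k$ digits to be zero; the case $t(\omega)=0$ (where $k<n$ and $\tau(\omega)\mid n$) needs a separate quick look, but there the block of zeros sits at positions $1,\dots,k$ directly.

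The key step is then to delete those first $k$ zeros: send $\omega=(\underbrace{0,\dots,0}_{k},\omega_{k+1},\dots,\omega_n)$ to $\omega'=(\omega_{k+1},\dots,\omega_n)$, a word of length $n-k$. I would check that this map is injective on $\Lambda_{n,k}(\beta_1,\beta_2)$ — this is immediate since prepending $k$ zeros recovers $\omega$ — and that $\omega'$ is admissible for $\beta_2$, i.e.\ $\omega' \in \Sigma_{\beta_2}^{n-k}$. For admissibility, note that every $\omega\in\Lambda_{n,k}(\beta_1,\beta_2)$ is a prefix of the $\beta$-expansion of $1$ for some $\beta\in(\beta_1,\beta_2]$, hence is $\beta_2$-admissible by Theorem \ref{P1}(3); and a suffix of a $\beta_2$-admissible word is again $\beta_2$-admissible by Parry's criterion Theorem \ref{P1}(1) (the shifted inequalities $\sigma^i\omega\le_{\mathrm{lex}}\cdots$ for the suffix are a subset of those for $\omega$). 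Therefore
$$
\sharp \Lambda_{n,k}(\beta_1,\beta_2) \;\le\; \sharp \Sigma_{\beta_2}^{n-k} \;\le\; \frac{\beta_2^{\,n-k+1}}{\beta_2-1},
$$
the last inequality being exactly Theorem \ref{cardinity}.

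The main obstacle is the bookkeeping in the first step: showing rigorously that the zero block at positions $t(\omega)+1,\dots,t(\omega)+k$ propagates back to positions $1,\dots,k$. One has to use the explicit periodic structure $(\omega_1,\dots,\omega_n)=\big((\omega_1,\dots,\omega_{\tau(\omega)})^{\lfloor n/\tau(\omega)\rfloor},\omega_1,\dots,\omega_{t(\omega)}\big)$ from Remark (1), observe that position $t(\omega)+j$ for $1\le j\le k$ falls inside one of the full periods (since $t(\omega)+k\le \tau(\omega)$, which itself needs justifying from $k<\tau(\omega)$ — a point to verify, perhaps as a hypothesis implicitly forced by $\Lambda_{n,k}$ being nonempty together with $k<n$), and conclude $\omega_{t(\omega)+j}=\omega_j$. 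Once this identification is in hand, the counting argument above is routine. If the edge cases ($t(\omega)=0$, or $\tau(\omega)\le k$) turn out not to force leading zeros, one can still salvage the bound by deleting $k$ digits from the appropriate location rather than the front — the deletion-and-count scheme is robust as long as some fixed block of $k$ coordinates is determined by the rest.
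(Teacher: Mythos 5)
Your central structural claim is false. You assert that the zero block at positions $t(\omega)+1,\ldots,t(\omega)+k$ ``propagates back'' to positions $1,\ldots,k$, forcing $\omega_1=\cdots=\omega_k=0$. But every $\omega\in\Lambda_n(\beta_1,\beta_2)$ begins with $\omega_1=\varepsilon_1(1,\beta)=\lfloor\beta\rfloor\ge 1$, so the leading digit is never $0$; under your reading $\Lambda_{n,k}(\beta_1,\beta_2)$ would be empty for every $k\ge 1$, contradicting the hypothesis of the lemma. The confusion is in the periodic structure: Remark (1) says the \emph{last} $t(\omega)$ digits of $\omega$ repeat the \emph{first} $t(\omega)$ digits, i.e.\ $\omega_{\lfloor n/\tau(\omega)\rfloor\tau(\omega)+j}=\omega_j$ for $1\le j\le t(\omega)$; it does not identify position $t(\omega)+j$ with position $j$ (that would require a shift by a multiple of $\tau(\omega)$, and $t(\omega)$ is not such a multiple unless it equals $0$). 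The zero block sits inside the first period, just after the prefix of length $t(\omega)$, and is a genuine constraint on the word, not a forced repetition of the leading digits.

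Your fallback --- ``delete $k$ digits from the appropriate location'' --- is exactly where the real work lies, and it is not routine. Since the deletion point $t(\omega)+1$ depends on $\omega$, injectivity of the deletion map is not automatic: two words with different values of $t$ could a priori have the same image of length $n-k$. The paper's proof defines $T(\omega)=(0^{t(\omega)},\omega_{t(\omega)+k+1},\ldots,\omega_n)\in\Sigma_{\beta_2}^{n-k}$ and then spends essentially the entire argument proving that $T(\omega)=T(\omega')$ forces $t(\omega)=t(\omega')$, via a recursive overlap argument that uses self-admissibility of both words together with $\omega_1\ge 1$ to derive a contradiction from $t(\omega)>t(\omega')$. (Once $t(\omega)=t(\omega')$ is known, the prefix $(\omega_1,\ldots,\omega_{t(\omega)})$ is recovered from the tail of the image by Remark (1), whence $\omega=\omega'$.) Your final counting step $\sharp\Sigma_{\beta_2}^{n-k}\le\beta_2^{n-k+1}/(\beta_2-1)$ and the admissibility of shifted words are fine, but the injection itself is the substance of the lemma and your proposal does not supply it.
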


\begin{proof}
Let $T:\Lambda_{n,k}(\beta_1,\beta_2)\rightarrow \Sigma_{\beta_2}^{n-k}$ where $$T(\omega_1,\ldots,\omega_{t(\omega)},0^k,\omega_{t(\omega)+k+1},\ldots,\omega_n)=(0^{t(\omega)},\omega_{t(\omega)+k+1},\ldots,\omega_n).$$ By the self-admissibility of $(\omega_1,\ldots,\omega_{t(\omega)},0^k,\omega_{t(\omega)+k+1},\ldots,\omega_n)$ and the monotonicity of $\varepsilon(1,\beta)$ with respect to $\beta$ (Theorem \ref{P1}(3)), we have $(0^{t(\omega)},\omega_{t(\omega)+k+1},\ldots,\omega_n)\in \Sigma_{\beta_2}^{n-k}$ for every $\omega \in \Lambda_{n,k}(\beta_1,\beta_2).$ Therefore, $T$ is well defined.

Moreover, we claim that $T$ is injective, that is, for each $\omega=(\omega_1,\ldots,\omega_n),\ \omega'=(\omega'_1,\ldots,\omega'_n)\in \Lambda_{n,k}(\beta_1,\beta_2)$, if $T(\omega)=T(\omega')$, we have $\omega=\omega'$. To prove $\omega=\omega'$, we first prove that $t(\omega)=t(\omega'),$ if it is not true, without loss of generality, assume that $t(\omega)>t(\omega')$, write $$\omega=(\omega_1,\ldots,\omega_{t(\omega)},0^k,\omega_{t(\omega)+k+1},\ldots,\omega_n),$$ then on the one hand, $T(\omega)=T(\omega')$ provides that
\begin{equation}\label{ww1}
(\omega'_{n-t(\omega')+1},\ldots,\omega'_n)=(\omega_{n-t(\omega')+1},\ldots,\omega_n).
\end{equation}
On the other hand, by the definition of $t(\omega)$ and $t(\omega')$, we have
\begin{equation}\label{ww2}
(\omega'_{n-t(\omega')+1},\ldots,\omega'_n)=(\omega'_1,\ldots,\omega'_{t(\omega')}),
\end{equation}
 and
\begin{equation}\label{ww3}
 (\omega_{n-t(\omega')+1},\ldots,\omega_n)=(\omega_{t(\omega)-t(\omega')+1},\ldots,\omega_{t(\omega)}).
\end{equation}Combination of (\ref{ww1}),(\ref{ww2}) and (\ref{ww3}) implies that
$$\omega'=(\omega_{t(\omega)-t(\omega')+1},\ldots,\omega_{t(\omega)},0^k,0^{t(\omega)-t(\omega')},\omega_{t(\omega)+k+1},\ldots,\omega_n).$$

Now we observe $\omega'$, by the self-admissibility of $\omega'$, we get that $$(\omega_1,\ldots,\omega_{t(\omega)})\leq_{\rm lex}(\omega_{t(\omega)-t(\omega')+1},\ldots,\omega_{t(\omega)},0^{t(\omega)-t(\omega')}),$$ but the self-admissibility of $\omega$ implies that $$(\omega_{t(\omega)-t(\omega')+1},\ldots,\omega_{t(\omega)},0^{t(\omega)-t(\omega')})\leq_{\rm lex} (\omega_1,\ldots,\omega_{t(\omega)}),$$ that is, $$(\omega_1,\ldots,\omega_{t(\omega)})=(\omega_{t(\omega)-t(\omega')+1},\ldots,\omega_{t(\omega)},0^{t(\omega)-t(\omega')}).$$ Thus, ones have $$(\omega_1,\ldots,\omega_{t(\omega')})=(\omega_{t(\omega)-t(\omega')+1},\ldots,\omega_{t(\omega)})$$ and $$(\omega_{t(\omega')+1},\ldots,\omega_{t(\omega)})=0^{t(\omega)-t(\omega')}.$$ If $t(\omega)-t(\omega')\geq t(\omega'),$ i.e.,\ $t(\omega)\geq 2t(\omega')$, then we have $\omega_{t(\omega)-t(\omega')+1}=0$ which contradicts that $\omega_1=\omega_{t(\omega)-t(\omega')+1}\geq 1$, if $t(\omega) < 2t(\omega')$ we get $$(\omega_1,\ldots,\omega_{t(\omega')})=(\omega_{t(\omega)-t(\omega')+1},\ldots,\omega_{t(\omega)})= (\omega_{t(\omega)-t(\omega')+1},\ldots,\omega_{t(\omega')},0^{t(\omega)-t(\omega')}).$$That is, $$(\omega_1,\ldots,\omega_{2t(\omega')-t(\omega)})=(\omega_{t(\omega)-t(\omega')+1},\ldots,\omega_{t(\omega')})$$ and $$(\omega_{2t(\omega')-t(\omega)+1},\ldots,\omega_{t(\omega')})=0^{t(\omega)-t(\omega')}.$$ If $t(\omega)-t(\omega')\geq 2t(\omega')-t(\omega),$ i.e.,\ $2t(\omega) \geq 3t(\omega')$, then we have $\omega_{t(\omega)-t(\omega')+1}=0$ which contradicts that $\omega_1=\omega_{t(\omega)-t(\omega')+1}\geq 1$, if $2t(\omega) < 3t(\omega'),$ we get $$(\omega_1,\ldots,\omega_{2t(\omega')-t(\omega)})=(\omega_{t(\omega)-t(\omega')+1},\ldots,\omega_{t(\omega')})= (\omega_{t(\omega)-t(\omega')+1},\ldots,\omega_{2t(\omega')-t(\omega)},0^{t(\omega)-t(\omega')}).$$
By recursion, the existence of $\omega'$ should be ensured by $nt(\omega)<(n+1)t(\omega')$ for every $n\geq 1$. But the assumption that $t(\omega)>t(\omega')$ indicates that $nt(\omega)\geq (n+1)t(\omega')$ when $n\geq \left\lfloor\frac{t(\omega')}{t(\omega)-t(\omega')}\right\rfloor+1$. A contraction. As a consequence, $t(\omega)=t(\omega').$ Then we can easily get that $\omega=\omega'$ by $T(\omega)=T(\omega')$.
Thus,
$$\sharp \Lambda_{n,k}(\beta_1,\beta_2)\leq \sharp \Sigma_{\beta_2}^{n-k} \leq \frac{\beta_2^{n-k+1}}{\beta_2-1}.$$ where the last inequality follows from Theorem \ref{cardinity}.
\end{proof}
\subsection{Upper bound of $\dim_{\rm{H}}F_{\delta}^P$}
For every $\beta_2>\beta_1>1$ and $1< \delta \leq 2$. Let $F_{\delta}^P(\beta_1,\beta_2):=\{\beta \in (\beta_1,\beta_2]: \overline{D}(\beta)= \delta\}.$ Now we estimate the upper bound of  $\dim_{\rm{H}}F_{\delta}^P(\beta_1,\beta_2)$.

For every $1<\eta <\delta$, we have
$$ F_{\delta}^P(\beta_1,\beta_2)\subseteq \bigcap_{N=1}^\infty \bigcup_{n=N}^\infty \{\beta \in (\beta_1,\beta_2]: |I^P_n(\beta)| \leq \beta^{-n\eta}\} = \bigcap_{N=1}^\infty \bigcup_{n=N}^\infty \bigcup_{\omega \in \Omega_n(\beta_1,\beta_2)} I^P_n(\omega)$$
where $$\Omega_n(\beta_1,\beta_2)=\{\omega=(\omega_1,\ldots,\omega_n) \in \Lambda_n:\ \exists\ \beta \in (\beta_1,\beta_2],\ s.t.\ \varepsilon_1(1,\beta)=\omega_1,\ldots,\varepsilon_n(1,\beta)=\omega_n\ {\rm{and}}\ |I^P_n(\omega)|\leq \beta^{-n\eta}\}.$$

At the moment, we estimate the number of $\Omega_n(\beta_1,\beta_2)$. For every $\omega \in \Omega_n(\beta_1,\beta_2)$, by Lemma \ref{EST}, there exists $\beta \in (\beta_1,\beta_2]$ which verifies that
\begin{equation}\label{eva}
\begin{array}{rcl}
\beta^{-n\eta} &\geq & |I^P_n(\omega)| \\
&\geq & \frac{(\underline{\beta}_n-1)^2}{\underline{\beta}_n} \overline{\beta}_n^{-n}\left(\frac{\omega_{t_n(\beta)+1}}{\overline{\beta}_n}+ \cdots+\frac{\omega_{\tau_n(\beta)}+1}{\overline{\beta}_n^{\tau_n(\beta)-t_n(\beta)}}\right).
\end{array}
\end{equation}

We claim that for every $\omega \in \Omega_n(\beta_1,\beta_2)$, there exists $N_1$, for all $n\geq N_1$, we have $$\omega_{t_n(\beta)+1}=\cdots=\omega_{t_n(\beta)+k_n(\beta)}=0$$where $k_n(\beta)=\lfloor n\eta \log_{\overline{\beta}_n}\beta\rfloor-n+\left \lfloor\log_{\overline{\beta}_n}\frac{(\underline{\beta}_n-1)^2}{\underline{\beta}_n}\right \rfloor$. If the claim is not true, then for every $N\geq 1$, there exists $n>N$, such that
$$\frac{(\underline{\beta}_n-1)^2}{\underline{\beta}_n} \overline{\beta}_n^{-n}\left(\frac{\omega_{t_n(\beta)+1}}{\overline{\beta}_n}+ \cdots+\frac{\omega_{\tau_n(\beta)}+1}{\overline{\beta}_n^{\tau_n(\beta)-t_n(\beta)}}\right)> \frac{(\underline{\beta}_n-1)^2}{\underline{\beta}_n} \overline{\beta}_n^{-n}\frac{1}{\overline{\beta}_n^{k_n}}=\beta^{-n\eta} .
$$ A contradiction with (\ref{eva}).

Since $\lim\limits_{n\rightarrow\infty} \overline{\beta}_n=\lim\limits_{n\rightarrow\infty} \underline{\beta}_n=\beta,$ there exists $N_2$, such that for every $n\geq N_2$, we have $\underline{\beta}_n\geq \beta_1$ and $\overline{\beta}_n\leq \beta_2$. We can choose another large enough integer  $N_2$ such that for every $n\geq N_2$, we have $\lfloor n\eta \log_{\overline{\beta}_n}\beta\rfloor \geq \lfloor n\eta \rfloor \geq n-\left \lfloor \log_{\beta_2}\frac{(\beta_1-1)^2}{\beta_1}\right\rfloor+1$.

Let $u_n=\lfloor n\eta \rfloor-n+\left \lfloor\log_{\beta_2}\frac{(\beta_1-1)^2}{\beta_1}\right \rfloor$ and $M\geq\max\{N_1,N_2\}$.  The above discussion indicates that $1\leq u_n\leq k_n(\beta)$ for every $n\geq M$. Therefore, for every $n\geq N$, we have $\Omega_n(\beta_1,\beta_2)\subset \Lambda_{n,u_n}(\beta_1,\beta_2)$ where $\Lambda_{n,u_n}(\beta_1,\beta_2)$ is defined as (\ref{nk}). Then by Lemma \ref{cardinality}, we have
$$\sharp \Omega_n(\beta_1,\beta_2)\leq \sharp \Lambda_{n,u_n}(\beta_1,\beta_2)\leq (\beta_2-1)\beta_2^{n-u_n+1}=\frac{\beta_1}{(\beta_1-1)^2(\beta_2-1)} \beta_2^{2n-\lfloor n\eta \rfloor+1}.$$
Therefore, we have for any $s>\frac{(2-\eta)\log_{\beta_1}\beta_2}{\eta}\rightarrow \frac{(2-\delta)\log_{\beta_1}\beta_2}{\delta}$, we have $$\begin{array}{rcl}\mathcal{H}^s\left(F^P_{\delta}(\beta_1,\beta_2)\right) &\leq & \liminf\limits_{N \rightarrow \infty}\sum\limits_{n=N}^\infty \beta_1^{-n\eta s}\frac{\beta_1}{(\beta_1-1)^2(\beta_2-1)}\beta_2^{2n-\lfloor n\eta\rfloor+1} \\
&= & \liminf\limits_{N\rightarrow\infty}\sum\limits_{n=N}^\infty \frac{\beta_1}{(\beta_1-1)^2(\beta_2-1)}\beta_1^{-n\eta s+(2n-\lfloor n\eta\rfloor+1)\log_{\beta_1}\beta_2}\\&<& \infty.
\end{array}$$ So $\dim_{\rm{H}}F_{\delta}^P(\beta_1,\beta_2)\leq\frac{(2-\delta)\log_{\beta_1}\beta_2}{\delta}$.

For any $\varepsilon>0$, we partition the parameter space $(1,+\infty)$ into $\{(a_n,a_{n+1}]:-\infty < n< \infty\}$ with $\frac{\log a_{n+1}}{\log a_n}<1+\varepsilon$  and $a_n\rightarrow 1$ as $n\rightarrow -\infty$. Then $$F_{\delta}^P=\bigcup_{n=-\infty}^\infty F_{\delta}^P(a_n,a_{n+1}).$$ By the $\sigma$-stability of the Huasdorff dimension, it suffices to get that $\dim_{\rm{H}}F_{\delta}^P\leq\frac{(2-\delta)}{\delta}$.

\subsection{Lower bound of $\dim_{\rm{H}}F_{\delta}^P$ }

This section devotes to estimating the lower bound of $\dim_{\rm{H}}F_{\delta}^P$. We only need to find out a Cantor subset $E$ being contained in $F_{\delta}^P$ satisfies $\dim_{\rm{H}}E\geq \frac{2-\delta}{\delta}$. A classical technique of estimating the lower bound is applying the following Mass distribution principle.
\begin{lemma}[Falconer \cite{FE}]\label{mass}
Let $E$ be a Borel subset of $\R^d$ and $\mu$ be a Borel measure with support $E$. Suppose for any $x\in E$,$$ \liminf_{r\rightarrow 0} \frac{\log \mu(B(x,r))}{\log r}\geq s.$$ Then we have $\dim_{\rm H} E \geq s$.
\end{lemma}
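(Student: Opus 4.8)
The plan is to deduce the lower bound on the Hausdorff dimension from the local lower density hypothesis by converting the pointwise condition into a uniform ball--mass estimate on a subset of positive $\mu$-measure, and then running the standard covering argument. Throughout I assume $\mu$ is a nontrivial finite Borel measure (a mass distribution); if $\mu$ is only $\sigma$-finite one first restricts to a Borel subset of $E$ on which $\mu$ is finite and positive. Since $\dim_{\rm H}E\geq s$ follows once we know $\mathcal H^{s'}(E)>0$ for every $s'<s$, I fix an arbitrary $s'<s$ and aim to prove $\mathcal H^{s'}(E)>0$.

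First I would unwind the $\liminf$ hypothesis. For $0<r<1$ one has $\log r<0$, so the inequality $\frac{\log\mu(B(x,r))}{\log r}\geq s'$ is equivalent to $\mu(B(x,r))\leq r^{s'}$. Hence for each $x\in E$ the assumption $\liminf_{r\to0}\frac{\log\mu(B(x,r))}{\log r}\geq s>s'$ furnishes a radius $\rho(x)>0$ with $\mu(B(x,r))\leq r^{s'}$ for all $0<r\leq\rho(x)$. Writing
$$E_m=\Bigl\{x\in E:\mu(B(x,r))\leq r^{s'}\ \text{for all}\ 0<r\leq\tfrac1m\Bigr\},\qquad m\geq1,$$
I obtain an increasing sequence with $E=\bigcup_{m\geq1}E_m$. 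Because $\mu$ is supported on $E$ we have $\mu(E)>0$, so continuity of measure from below yields some $m_0$ with $\mu(E_{m_0})>0$; set $F=E_{m_0}$, $\rho=1/m_0$ and $c_0=\mu(F)>0$.

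Next comes the mass distribution / covering step applied to $F$. Let $\{U_i\}_i$ be any countable cover of $F$ by sets with $\mathrm{diam}(U_i)\leq\rho/2$. Discarding those $U_i$ that miss $F$ and choosing $x_i\in U_i\cap F$ for the rest, each remaining $U_i$ satisfies $U_i\subseteq B(x_i,2\,\mathrm{diam}(U_i))$ with $2\,\mathrm{diam}(U_i)\leq\rho$, so the defining property of $F$ gives $\mu(U_i)\leq\mu\bigl(B(x_i,2\,\mathrm{diam}(U_i))\bigr)\leq 2^{s'}(\mathrm{diam}(U_i))^{s'}$. Summing and using $F\subseteq\bigcup_i U_i$ yields $c_0=\mu(F)\leq\sum_i\mu(U_i)\leq 2^{s'}\sum_i(\mathrm{diam}(U_i))^{s'}$. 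Taking the infimum over all such covers gives $\mathcal H^{s'}_{\rho/2}(F)\geq 2^{-s'}c_0$, and letting the gauge shrink, $\mathcal H^{s'}(E)\geq\mathcal H^{s'}(F)\geq 2^{-s'}c_0>0$. Thus $\dim_{\rm H}E\geq s'$, and letting $s'\uparrow s$ completes the proof.

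I expect the only genuine subtlety to be the measurability of the sets $E_m$ together with the passage to some $m$ with $\mu(E_m)>0$: the map $x\mapsto\mu(\overline B(x,r))$ is upper semicontinuous, hence Borel, so each $E_m$ is Borel (realised as a countable intersection over rational radii $r$), which legitimizes the use of continuity of measure; if one prefers, inner regularity lets one replace $F$ by a compact subset of positive measure. Everything else is routine: the key inequality $\mu(B(x,r))\leq r^{s'}$ is a direct rewriting of the hypothesis, and the covering estimate is the textbook mass distribution principle, with the harmless constant $2^{s'}$ arising from enclosing a small set in a ball centred at one of its own points.
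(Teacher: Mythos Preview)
The paper does not supply a proof of this lemma; it is merely quoted from Falconer's book as a standard tool (the mass distribution principle) and then applied to the Cantor set $E$. Your argument is correct and is essentially the textbook proof: pass from the pointwise $\liminf$ hypothesis to a uniform bound $\mu(B(x,r))\leq r^{s'}$ on a set $F$ of positive $\mu$-measure, then run the covering estimate to obtain $\mathcal H^{s'}(F)>0$, hence $\dim_{\rm H}E\geq s'$ for every $s'<s$. Two minor remarks: the enclosure $U_i\subseteq B(x_i,\mathrm{diam}(U_i))$ already holds for any $x_i\in U_i$, so the factor $2$ is unnecessary (though harmless); and your handling of the measurability of $E_m$ via upper semicontinuity of $x\mapsto\mu(\overline B(x,r))$ and rational radii is the right way to make the exhaustion rigorous.
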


Typically, we divide four steps to complete our result: we first construct a Cantor subset $E$ of $F_{\delta}^P$ and then we define a measures or mass distribution $\mu$ with $\mu(E)>0$. After that, we estimate the $H\ddot{o}lder$ exponent of the measure $\mu$. Finally, we obtain our result by using the above theorem.

\subsubsection{Cantor subset of  $F_{\delta}^P$}
Fix $0<\zeta<\frac{2-\delta}{2\delta}$, let $N> 1$ be an integer. Given $n\geq 1$, let $$U_n=\{u=(u_1,\ldots,u_n):u_i\in\{1,\ldots,N-1\},\ \forall\  1\leq i\leq n\}$$ which is a set with $(N-1)^n$ elements.

Now we recursively construct the Cantor subset $E$ of $F_{\delta}^P$.\\
\textbf{\textbf{Step \uppercase\expandafter{\romannumeral1}}} The $0$th generation of $E$

Let $$\varepsilon^{(0)}=(N,N)\ \ {\rm and} \ \ E_0=\{\varepsilon^{(0)}\}.$$
We define the $0$th generation of $E$ as $\mathcal{E}_0=\left\{I_2^P(\varepsilon^{(0)}):\varepsilon^{(0)}\in E_0\right\}.$ For the sake of simplicity, the family of $E_0$ is also said to be the $0$th generation of $E$  without any ambiguity in the remaining parts of this paper.\\
\textbf{Step \uppercase\expandafter{\romannumeral2}} The $1$th generation of $E$

Let $m_0=2$ be the length of $\varepsilon^{(0)}\in E_0$. We choose an integer $n_1$ which is large enough such that $n_1\gg m_0$. Denote $a_1=n_1-2$. We collect a family of self-admissible and non-recurrent words beginning with $\varepsilon^{(0)}$:
$$A(\varepsilon^{(0)})=\left\{(\varepsilon^{(0)},u):u \in U_{a_1}\right\}.$$
The self-admissibility and non-recurrence of the elements in $A(\varepsilon^{(0)})$ are guaranteed by the criterion of a self-admissible and non-recurrent word.

After that, let
$$b_1=\left\lfloor\frac{1}{\zeta}\frac{\delta-1}{\delta}n_1\right\rfloor+1\geq 1,$$
and
$$c_1=\left\lfloor\left(\frac{1}{\zeta}\frac{2-\delta}{\delta}-2\right)n_1\right\rfloor+1\geq 1.$$
The second inequality (\ref{2}) is ensured by the choice of $\zeta$, then we give the first generation of $E$:
$$E_1=\left\{\varepsilon^{(1)}=(v,0^{b_1-1},N,u',v,0^{b_1-1}): v\in A(\varepsilon^{(0)}), u'\in U_{c_1}\right\}.$$  At this moment, we give some simple observation on the elements in $ E_1$.
\begin{remark}
(1) For every $\omega\in E_1$, by the criterion of self-admissibility and the choice of elements in $U_n$, we easily get that $\omega$ is self-admissible.\\
(2) For every $\omega\in E_1$, recall the definition of $\tau(\omega)$ (\ref{tau}) and $t(\omega)$ (\ref{t}), we obtain that $\tau(\omega)=n_1+b_1+c_1$ and $t(\omega)=n_1+b_1-1$.\\
(3) For every $\omega\in E_1$, the construction of $\omega$ and the criterion of self-admissibility and non-recurrence ensure that the word $(\omega,u)$ is self-admissible and non-recurrent for every $u\in U_n(n\geq 1).$
\end{remark}

Thus, by the analysis of the words in $E_1$, we define:
$$\mathcal{E}_1=\bigcup_{\varepsilon^{(1)}\in E_1}I_{2n_1+2b_1+c_1-1}^P(\varepsilon^{(1)}).$$ Let $m_1=2n_1+2b_1+c_1-1$.\\
\textbf{Step \uppercase\expandafter{\romannumeral3}} The $k$th generation from the $k-1$th generation of $E$

Suppose that the $k-1$th generation of $E$ has been well defined, we write it as $E_{k-1}$. Moreover, the elements in $E_{k-1}$ can concatenated with any word in $U_n$ to be a self-admissible and non-recurrent word.  We give the construction of the $k$th generation of $E$.

Denote $m_{k-1}$ as the length of $\varepsilon^{(k-1)}\in E_{k-1}$. Choose an integer $n_k\in \N$ such that $n_k\gg m_{k-1}$, write $a_k=n_k-m_{k-1}$, then we collect a family of self-admissible and non-recurrent words beginning with $\varepsilon^{(k-1)}$:
$$A(\varepsilon^{(k-1)})=\left\{(\varepsilon^{(k-1)},u):u \in U_{a_k}\right\}.$$
After that, let
\begin{equation}\label{1}
b_k=\left\lfloor\frac{1}{\zeta}\frac{\delta-1}{\delta}n_k\right\rfloor+1\geq 1,
\end{equation}
and
\begin{equation}\label{2}
c_k=\left\lfloor(\frac{1}{\zeta}\frac{2-\delta}{\delta}-2)n_k\right\rfloor+1\geq 1.
\end{equation}
The second inequality (\ref{2}) is ensured by the choice of $\zeta$, then we define the $k$th generation of $E$:
$$E_k=\left\{\varepsilon^{(k)}=(v,0^{b_k-1},N,u',v,0^{b_k-1}): v\in A(\varepsilon^{(k-1)}), u'\in U_{c_k}\right\}.$$  We now give some simple observation on the elements in $E_k$.
\begin{remark}\label{3}
(1) For any $\omega\in E_k$, by the criterion of self-admissibility and the choice of elements in $U_n$, we easily get that $\omega$ is self-admissible.\\
(2) For any $\omega\in E_k$, recall the definition of $\tau(\omega)$ (\ref{tau}) and $t(\omega)$ (\ref{t}), the construction of $E_k$ implies that $\tau(\omega)=n_k+b_k+c_k$ and $t(\omega)=n_k+b_k-1$.\\
(3) For any $\omega\in E_k$,the construction of $\omega$ and the criterion of self-admissibility and non-recurrence ensure that the word $(\omega,u)$ is self-admissible and non-recurrent for every $u\in U_n(n\geq 1).$
\end{remark}

Thus, by the analysis of the words in $E_k$, let
$$\mathcal{E}_k=\bigcup_{\varepsilon^{(k)}\in E_k}I_{2n_k+2b_k+c_k-1}^P\left(\varepsilon^{(k)}\right).$$ Let $m_k=2n_k+2b_k+c_k-1$.
Repeating the procedure mentioned above, we obtain a nested sequence $\{\mathcal{E}_k\}_{k\geq1}$ which is composed by the cylinders in parameter space. We finally get the Cantor set which is defined as $$E=\bigcap_{k=1}^\infty\bigcup_{\varepsilon^{(k)}\in E_k} I_{|\varepsilon^{(k)}|}^P\left(\varepsilon^{(k)}\right)=\bigcap_{k=1}^\infty\bigcup_{\varepsilon^{(k)}\in E_k} I_{2n_k+2b_k+c_k-1}^P\left(\varepsilon^{(k)}\right).$$

Let $C:=\frac{(N-1)^2}{N}$, we get that for every $\beta \in E$ and $n\geq1$, we have $N<\underline{\beta}_n\leq N+1$, so $C_n(\beta)=\frac{(\underline{\beta}_n-1)^2}{\underline{\beta}_n}\geq C$. Now we show that $E$ is the subset of $I_{\delta}^P$ as the following result:
\begin{lemma}
$E \subset F_{\delta}^P$.
\end{lemma}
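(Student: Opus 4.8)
The plan is to show that every $\beta\in E$ satisfies $\overline{D}(\beta)=\delta$ by carefully analysing the recurrence structure of the blocks $\varepsilon^{(k)}$ used to build $E$. Fix $\beta\in E$. For each $k$, the construction gives $\varepsilon^{(k)}=(v,0^{b_k-1},N,u',v,0^{b_k-1})$ with $v\in A(\varepsilon^{(k-1)})$ of length $n_k$ and $u'\in U_{c_k}$, so $|\varepsilon^{(k)}|=m_k=2n_k+2b_k+c_k-1$, and by Remark \ref{3}(2) we have $\tau(\varepsilon^{(k)})=n_k+b_k+c_k$ and $t(\varepsilon^{(k)})=n_k+b_k-1$. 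The first step is to combine these with the choices $b_k=\lfloor\frac{1}{\zeta}\frac{\delta-1}{\delta}n_k\rfloor+1$ and $c_k=\lfloor(\frac{1}{\zeta}\frac{2-\delta}{\delta}-2)n_k\rfloor+1$ to compute the ratios $\frac{m_k}{n_k}\to\frac{2}{\zeta\delta}$ and $\frac{\tau(\varepsilon^{(k)})-t(\varepsilon^{(k)})}{n_k}=\frac{c_k+1}{n_k}\to\frac{1}{\zeta}\frac{2-\delta}{\delta}-2+\frac{\delta-1}{\zeta\delta}\cdot 0$... more precisely $\frac{c_k+1}{m_k}\to\frac{(2-\delta)/\delta-2\zeta}{2/\delta}$, and then verify that along the subsequence of lengths $m_k$ one gets $\frac{-\log_\beta|I^P_{m_k}(\beta)|}{m_k}\to\delta$.

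The second step is the upper bound $\overline{D}(\beta)\le\delta$, for which I would use Lemma \ref{EVA}: it suffices to show $\tau(\beta)=\limsup_n\frac{\tau_n(\beta)-t_n(\beta)}{n}\le\delta-1$. Here the key observation is that, by Remark \ref{3}(3), every word $(\varepsilon^{(k)},u)$ with $u\in U_a$ is non-recurrent, hence for all $n$ between $m_k$ and $m_{k+1}$ the prefix of length $n$ of $\varepsilon(1,\beta)$ has recurrence time governed either by $n$ itself (non-recurrent range) or by the block structure that only becomes recurrent once the pattern $(\varepsilon^{(k)},0^{b_{k+1}-1},N,u',\varepsilon^{(k)},0^{b_{k+1}-1})$ closes up. One checks that the maximal value of $\frac{\tau_n(\beta)-t_n(\beta)}{n}$ over $m_k\le n\le m_{k+1}$ is attained essentially at $n=m_{k+1}$ (or at the point where the second copy of $v$ together with its trailing zeros is completed), and this maximal ratio tends to $\delta-1$ by the arithmetic of Step 1. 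For the lower bound $\overline{D}(\beta)\ge\delta$ I would instead use the lower estimate in Lemma \ref{EST}: at $n=m_k$ we have $t_{m_k}(\beta)=t(\varepsilon^{(k)})=n_k+b_k-1$ and the digits $\varepsilon_{t+1},\dots,\varepsilon_{t+?}$ are a long block of zeros (coming from the $0^{b_k-1}$ tail wrapping into the period), so the bracketed sum in Lemma \ref{EST} is bounded by a constant times $\overline{\beta}_{m_k}^{-(c_k+1)}$, which forces $|I^P_{m_k}(\beta)|\le C_{m_k}(\beta)\overline{\beta}_{m_k}^{-m_k-c_k-1}$ times a harmless factor; combined with $\overline{\beta}_n\to\beta$ (Theorem \ref{LIMIT}) and $\frac{m_k+c_k+1}{m_k}\to\delta$ this yields $\frac{-\log_\beta|I^P_{m_k}(\beta)|}{m_k}\ge\delta-o(1)$.

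I expect the main obstacle to be Step 2's upper bound: one must control $\tau_n(\beta)-t_n(\beta)$ \emph{uniformly} over all $n$, not just along the $m_k$, and the worst case could a priori occur at an awkward intermediate length where $n$ sits partway through the $c_k$-block $u'$ or partway through the second copy of $v$. The resolution is a case analysis on the position of $n$ relative to the landmarks $m_{k-1}$, $n_k+b_{k-1}$ (wait — rather the landmarks inside $\varepsilon^{(k)}$: the end of the first $v$, the end of $0^{b_k-1}N u'$, the end of the second $v$), using Remark \ref{3}(3) to rule out spurious recurrences in the $U$-blocks and using Lemma \ref{re} together with Lemma \ref{non} to pin down $\tau_n(\beta)$ exactly in each regime; in every regime the ratio $\frac{\tau_n(\beta)-t_n(\beta)}{n}$ is either $0$, or bounded by something of the form $\frac{c_k+1+O(1)}{n}$ with $n\ge n_k+b_k$, and the supremum of the latter over the admissible range of $n$ tends to $\frac{c_k+1}{n_k+b_k}\to\frac{(2-\delta)/\delta}{1/\zeta+\,(\delta-1)/(\zeta\delta)}=\delta-1$ after simplification. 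Once this uniform bound is in hand, Lemma \ref{EVA} closes the gap and we conclude $\overline{D}(\beta)=\delta$, i.e. $E\subset F_\delta^P$.
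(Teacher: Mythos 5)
Your overall strategy (show $\overline{D}(\beta)\le\delta$ for all $n$ and $\overline{D}(\beta)\ge\delta$ along a subsequence) is the right shape, but both halves as you propose them break down. For the lower bound, the subsequence you choose is wrong: at $n=m_k$ the prefix is the whole word $\varepsilon^{(k)}=(v,0^{b_k-1},N,u',v,0^{b_k-1})$, so $t_{m_k}(\beta)=n_k+b_k-1$ and the very first digit entering the bracketed sum of Lemma \ref{EST} is $\varepsilon_{n_k+b_k}(1,\beta)=N\ne 0$; hence $|I^P_{m_k}(\beta)|\ge C\,\overline{\beta}_{m_k}^{-m_k-1}$ and $-\log_\beta|I^P_{m_k}(\beta)|/m_k\to 1$, not $\delta$. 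The correct subsequence is $\ell_k=2n_k+b_k+c_k$ (the end of the \emph{second} copy of $v$), where $t_{\ell_k}(\beta)=n_k$ and the digits following position $t_{\ell_k}$ really are the block $0^{b_k-1}$, and $\frac{b_k}{\ell_k}\to\delta-1$. More fundamentally, your derivation of an \emph{upper} bound on $|I^P_{m_k}(\beta)|$ from Lemma \ref{EST} is invalid: that lemma is a one-sided lower estimate, and bounding its right-hand side from above tells you nothing about $|I^P_n(\beta)|$ from above. The only upper bound available a priori is Schmeling's $\overline{\beta}_n^{-n+1}$, which is too weak; the paper instead computes $|I^P_{\ell_k}(\beta)|=\overline{\beta}_{\ell_k}-\underline{\beta}_{\ell_k}$ explicitly from the polynomial equations for the two endpoints (Lemmas \ref{S} and \ref{max}), obtaining $|I^P_{\ell_k}(\beta)|\le N\,\overline{\beta}_{\ell_k}^{-(2n_k+2b_k+c_k-1)}$. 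This endpoint computation is an essential ingredient your proposal is missing.

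The upper bound via Lemma \ref{EVA} also fails quantitatively. Along $n=\ell_k$ one has $\tau_n(\beta)-t_n(\beta)=b_k+c_k$, and
\[
\frac{b_k+c_k}{\ell_k}\longrightarrow \frac{\tfrac{1}{\zeta\delta}-2}{\tfrac{1}{\zeta\delta}}=1-2\zeta\delta,
\]
which is \emph{strictly greater} than $\delta-1$ precisely because $\zeta<\frac{2-\delta}{2\delta}$. So $\tau(\beta)\ge 1-2\zeta\delta>\delta-1$ and Lemma \ref{EVA} only yields $\overline{D}(\beta)\le 2-2\zeta\delta>\delta$; your claim that $\sup_n\frac{\tau_n-t_n}{n}$ tends to $\delta-1$ is false (your simplification of $\frac{c_k+1}{n_k+b_k}$ is also off: its limit as $\zeta\to0$ is $\frac{2-\delta}{\delta-1}$, not $\delta-1$). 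The reason Lemma \ref{EVA} is too crude here is that its proof keeps only the \emph{last} term $\overline{\beta}_n^{-(\tau_n-t_n)}$ of the bracketed sum, whereas for $\beta\in E$ the first nonzero digit after position $t_n$ is the $N$ sitting at position $n_k+b_k$, only about $b_k$ places later, so the bracketed sum is of order $\overline{\beta}_n^{-(2n_k+2b_k+c_k-n)}$ rather than $\overline{\beta}_n^{-(\tau_n-t_n)}$. To close the argument you must return to the full statement of Lemma \ref{EST} and run the case analysis on the position of $n$ within $(m_{k-1},m_k]$ (the four regimes delimited by $n_k$, $n_k+b_k$, $n_k+b_k+c_k$, $2n_k+b_k+c_k$), tracking in each regime where the first nonzero digit after $t_n(\beta)$ lies — this is exactly what the paper does, and it gives ratio $\to1$ in the first three regimes and $\to\delta$ in the last.
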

\begin{proof}
For every $\beta \in E$. Let $m_k=2n_k+2b_k+c_k-1$, by the construction of $E$, there exists $\varepsilon^{(k)}\in E_k$ such that $\varepsilon(1,\beta)|_{m_k}=\varepsilon^{(k)}$. Now we estimate the length of $I_n^P(\beta)$ when $m_{k-1} <n \leq m_k$ and give the upper density of $\beta$, we distinguish four cases for discussion.\\
(1) When $m_{k-1} <n \leq n_k+b_k+c_k$ and $n \neq n_k+b_k$, by the construction of $E_k$, recall the definition of $\tau_n(\beta)$ (\ref{taun}), we get that $\tau_n(\beta)=n$, so Lemma \ref{EST} gives that
\begin{equation}\label{l1}
|I_n^P(\beta)|\geq C_n(\beta)\overline{\beta}_n^{-n}\geq C\overline{\beta}_n^{-n}.
\end{equation}
 Consequently, we get that $$\frac{-\log_\beta|I_n^P(\beta)|}{n}\leq -\frac{\log C}{n}+\frac{\log\overline{\beta}_n}{\log\beta}\rightarrow 1$$ as $k\rightarrow \infty$.\\
(2) When $n=n_k+b_k$, we have $\tau_n(\beta)=n-1$, $t_n(\beta)=1$, so $\varepsilon_{t_n(\beta)+1}=N\geq 1$, by Lemma \ref{EST}, we have
\begin{equation}\label{l2}
|I_n^P(\beta)|\geq C_n(\beta)\overline{\beta}_n^{-n}\left(\frac{\varepsilon_{t_n(\beta) +1}(1,\beta)}{\overline{\beta}_n}+ \cdots + \frac{\varepsilon_{\tau_n(\beta)}(1,\beta)+1}{\overline{\beta}_n^{(\tau_n(\beta)-t_n(\beta))}}\right) \geq C\overline{\beta}_n^{-n-1}.
\end{equation}
This indicates that $$\frac{-\log_\beta|I_n^P(\beta)|}{n}\leq -\frac{\log C}{n}+\frac{(n+1)\log\overline{\beta}_n}{n\log\beta}\rightarrow 1$$ as $k\rightarrow \infty$.\\
(3) When $n_k+b_k+c_k \leq n < 2n_k+b_k+c_k$, by the construction of $E_k$, recall the definition of $\tau_n(\beta)$ (\ref{taun}) and $t_n(\beta)$ (\ref{tn}), we get that $\tau_n(\beta)=n_k+b_k+c_k$ and $0\leq t_n(\beta)=n-(n_k+b_k+c_k)\leq n_k-1$, so we have  $\varepsilon_{t_n(\beta)+1}\geq 1$ when $m_{k-1}\leq n \leq n_{k-1}$ and  $\varepsilon_{t_n(\beta)+m_{k-1}}\geq 1$ when $0\leq n \leq m_{k-1}$, we immediate get by Lemma \ref{EST} that
\begin{equation}\label{l3}
|I_n^P(\beta)|\geq C_n(\beta)\overline{\beta}_n^{-n}\left(\frac{\varepsilon_{t_n(\beta) +1}(1,\beta)}{\overline{\beta}_n}+ \cdots + \frac{\varepsilon_{\tau_n(\beta)}(1,\beta)+1}{\overline{\beta}_n^{(\tau_n(\beta)-t_n(\beta))}}\right) \geq C\overline{\beta}_n^{-n-m_{k-1}}.
\end{equation}
Similar to the case (2), we get that $$\frac{-\log_\beta|I_n^P(\beta)|}{n}\leq -\frac{\log C}{n}+\frac{(n+m_{k-1})\log\overline{\beta}_n}{n\log\beta}\rightarrow  1$$ as $k\rightarrow \infty$.\\
(4) When $2n_k+b_k+c_k \leq n \leq 2n_k+2b_k+c_k-1=m_k$, by the construction of $E_k$, we get that $\tau_n(\beta)=n_k+b_k+c_k$ and $t_n(\beta)=n-(n_k+b_k+c_k)$, thus, Lemma \ref{EST} indicates that
\begin{equation}\label{l4}
\begin{array}{rcl}
|I_n^P(\beta)|&\geq &C_n(\beta)\overline{\beta}_n^{-n}\left(\frac{\varepsilon_{t_n(\beta) +1}(1,\beta)}{\overline{\beta}_n}+ \cdots + \frac{\varepsilon_{\tau_n(\beta)}(1,\beta)+1}{\overline{\beta}_n^{(\tau_n(\beta)-t_n(\beta))}}\right)\\& \geq & C_n(\beta)\overline{\beta}_n^{-n}\frac{1}{\overline{\beta}_n^{n_k+b_k+c_k-(n-n_k-b_k-c_k)}}\\&\geq& C\overline{\beta}_n^{-(2n_k+2b_k+c_k)}.
\end{array}
\end{equation}
Consequently, the above inequality combines with (\ref{1}), (\ref{2}) gives that $$\frac{-\log_\beta|I_n^P(\beta)|}{n} \leq -\frac{\log C}{n}+\frac{(2n_k+2b_k+c_k)\log\overline{\beta}_n}{(2n_k+b_k+c_k)\log\beta}\rightarrow \delta $$ as $k\rightarrow \infty$.

Now we find a subsequence of $n\in\N$ satisfies that its upper density reach to $\delta$. In fact, let $\ell_k=2n_k+b_k+c_k$, when $n=\ell_k$, we have $\tau_n(\beta)=n_k+b_k+c_k$. Then it follows from Lemma \ref{S} and Lemma \ref{max} that:
$$1=\frac{\varepsilon_1(1,\beta)}{\underline{\beta}_n}+\cdots+\frac{\varepsilon_n(1,\beta)}{\underline{\beta}_n^n},$$
and
$$1=\frac{\varepsilon_1(1,\beta)}{\overline{\beta}_n}+\cdots+\frac{\varepsilon_{\tau_n(\beta)}(1,\beta)}{\overline{\beta}_n^{\tau_n(\beta)}}=
\frac{\varepsilon_1(1,\beta)}{\overline{\beta}_n}+\cdots+\frac{\varepsilon_{n}(1,\beta)}{\overline{\beta}_n^{n}}+
\frac{\varepsilon_{2n_k+2b_k+c_k}(1,\beta)}{\overline{\beta}_n^{2n_k+2b_k+c_k}}+\cdots+ \frac{\varepsilon_{2\tau_n(\beta)}(1,\beta)+1}{\overline{\beta}_n^{2\tau_n(\beta)}}.$$
As a consequence, we have
$$\begin{array}{rcl}
|I_n^P(\beta)|&= &\overline{\beta}_n-\underline{\beta}_n \\& = &
\left(\varepsilon_1(1,\beta)+\cdots +\frac{\varepsilon_{n}(1,\beta)}{\overline{\beta}_n^{n-1}}+ \frac{\varepsilon_{2n_k+2b_k+c_k}(1,\beta)}{\overline{\beta}_n^{2n_k+2b_k+c_k-1}} +\cdots+ \frac{\varepsilon_{2\tau_n(\beta)}(1,\beta)+1}{\overline{\beta}_n^{2\tau_n(\beta)-1}}\right)- \left(\varepsilon_1(1,\beta)+\cdots+\frac{\varepsilon_n(1,\beta)}{\underline{\beta}_n^{n-1}}\right)
\\& \leq & \frac{\varepsilon_{2n_k+2b_k+c_k}(1,\beta)}{\overline{\beta}_n^{2n_k+2b_k+c_k-1}}+\cdots+ \frac{\varepsilon_{2\tau_n(\beta)}(1,\beta)+1}{\overline{\beta}_n^{2\tau_n(\beta)-1}}\\&\leq& \frac{N}{\overline{\beta}_n^{2n_k+2b_k+c_k-1}}.
\end{array}$$
That is, $$\lim_{k\rightarrow\infty}\frac{-\log_\beta|I_{\ell_k}^P(\beta)|}{\ell_k} \geq \lim_{k\rightarrow\infty}\frac{-\log N+(2n_k+2b_k+c_k-1)\log\overline{\beta}_n}{(2n_k+b_k+c_k)\log\beta}= \delta.$$

From the discussion above, we get that $\beta\in F_{\delta}^P.$ The desired conclusion follows.
\end{proof}
\subsubsection{Mass distribution supported on $E$}
In this section, we mainly define a measure supported on $E$. Such a measure is given according to the cylinders which have non-empty intersection with $E$. For all $\beta\in E$, let $\{I_n^P(\beta)\}_{n\geq1}$ be the cylinders which contain $\beta$ and denote $$\varepsilon(1,\beta)=(\varepsilon^{(k-1)},u_1,\ldots,u_{a_k},0^{b_k-1},N,u_1',\ldots,u_{c_k},\varepsilon^{(k-1)},u_1,\ldots,u_{a_k},0^{b_k-1},\ldots),$$

Note that the order of $\varepsilon^{(k-1)}$ is $m_k:=2n_k+2b_k+c_k-1$ where $m_0=2$. Define $$\mu(I_2^P(\varepsilon^{(0)}))=\mu(I_2^P(N,N))=1.$$ For all $n\geq 2$ and $k\geq 1$, let
$$\mu(I^P_n(\beta))= \left\{
\begin{aligned}
\frac{1}{(N-1)^{n-m_{k-1}}}\mu\left(I_{m_{k-1}}^P(\varepsilon^{(k-1)})\right) & , &{\rm when}\ \ m_{k-1}< n\leq n_k ; \\
\mu\left(I_{m_{k-1}}^P(\varepsilon^{(k-1)},u_1,\ldots,u_{a_k})\right)& , & {\rm when}\ \ n_k < n\leq n_k+b_k;\\
\frac{1}{(N-1)^{n-n_k-b_k}}\mu\left(I_{n_k}^P(\varepsilon^{(k-1)},u_1,\ldots,u_{a_k},0^{b_k-1},N)\right)& , & {\rm when}\ \ n_k+b_k< n\leq n_k+b_k+c_k; \\
\mu\left(I_{n_k+b_k+c_k}^P(\varepsilon^{(k-1)},u_1,\ldots,u_{a_k},0^{b_k-1},N,u_1',\ldots,u'_{c_k})\right)& , & {\rm when}\ \ n_k+b_k+c_k < n\leq m_k;
\end{aligned}
\right.$$

More precisely,
$$\mu(I^P_n(\beta))= \left\{
\begin{aligned}
\frac{1}{(N-1)^{n-m_{k-1}+\sum\limits_{i=1}^{k-1}(a_i+c_i)}} & , &{\rm when}\ \ m_{k-1}< n \leq n_k; \\
\frac{1}{(N-1)^{a_k+\sum\limits_{i=1}^{k-1}(a_i+c_i)}} & , & {\rm when}\ \ n_k < n \leq n_k+b_k;\\
\frac{1}{(N-1)^{a_k+n-n_k-b_k+\sum\limits_{i=1}^{k-1}(a_i+c_i)}} & , & {\rm when}\ \ n_k+b_k< n\leq n_k+b_k+c_k; \\
\frac{1}{(N-1)^{\sum\limits_{i=1}^k(a_i+c_i)}}& , & {\rm when}\ \ n_k+b_k+c_k < n\leq m_k ;
\end{aligned}
\right.$$

By the definition of $\mu$ supported on $E$. We directly get the following result which will serve as a key point to estimate the lower bound of the lower limit of $\frac{\log \mu(B(\beta,r))}{\log r}$ where $B(\beta,r)$ is a arbitrarily small ball with non-empty intersection with $E$.
\begin{lemma}\label{e}
Let $\beta \in E$, then we have
$$\liminf_{n\rightarrow\infty}\frac{\log\mu\left(I_n^P(\beta)\right)}{\log|I_{n+1}^P(\beta)|}\geq \frac{\log(N-1)}{\log(N+1)}\left(\frac{2-\delta}{\delta}-\zeta\right).$$
\end{lemma}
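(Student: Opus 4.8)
The plan is to bound $\mu(I_n^P(\beta))$ from above by a power of $|I_{n+1}^P(\beta)|$, uniformly for $n$ large, and then take logarithms. The key inputs are: the explicit formula for $\mu(I_n^P(\beta))$ as a negative power of $N-1$ (displayed just above the statement); the lower estimate $C_n(\beta)\overline{\beta}_n^{-n}\le|I_{n+1}^P(\beta)|$ coming from Lemma \ref{EST}, together with the bound $\overline{\beta}_n\le N+1$ and $C_n(\beta)\ge C=\frac{(N-1)^2}{N}$ valid for $\beta\in E$; and the asymptotic relations for $b_k,c_k$ from (\ref{1}) and (\ref{2}), namely $b_k\sim\frac1\zeta\frac{\delta-1}{\delta}n_k$ and $c_k\sim(\frac1\zeta\frac{2-\delta}{\delta}-2)n_k$, hence $m_k=2n_k+2b_k+c_k-1\sim\frac{2}{\zeta}n_k$. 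Since we are free to choose $n_k\gg m_{k-1}$ at each stage, the contribution $\sum_{i=1}^{k-1}(a_i+c_i)$ to the exponent in $\mu$ is negligible compared with $n_k$, so for the liminf it suffices to control the ratio on each "block" $m_{k-1}<n\le m_k$ with error terms that vanish as $k\to\infty$.

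First I would fix $k$ large and split into the four regimes of $n$ used in the definition of $\mu$. In each regime I would write $-\log_\beta\mu(I_n^P(\beta))$ as (exponent of $N-1$)$\cdot\log_\beta(N-1)$ and compare it with $-\log_\beta|I_{n+1}^P(\beta)|$, which by Lemma \ref{EST} is at most $(n+O(m_{k-1}))\log_\beta(N+1)-\log_\beta C$ — here I would also need the trivial upper bound $|I_{n+1}^P(\beta)|\le\overline\beta_{n+1}^{-n}$ from Lemma \ref{S} to keep the denominator of the ratio controlled, and the fact that $\overline\beta_n,\underline\beta_n\to\beta$ so that $\log_\beta(N+1)$ and $\log_\beta(N-1)$ are the right asymptotic constants up to $1+o(1)$. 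The worst regime is expected to be the last one, $n_k+b_k+c_k<n\le m_k$, where $\mu$ is the flat value $(N-1)^{-\sum_{i=1}^k(a_i+c_i)}$ with $a_k+c_k\approx n_k-m_{k-1}+c_k\approx(\frac1\zeta\frac{2-\delta}{\delta}-1)n_k$, while $|I_{n+1}^P(\beta)|$ is smallest near $n=m_k\approx\frac2\zeta n_k$; there the ratio tends to $\frac{a_k+c_k}{m_k}\cdot\frac{\log(N-1)}{\log(N+1)}\to\frac{\log(N-1)}{\log(N+1)}\cdot\frac{\frac1\zeta\frac{2-\delta}{\delta}-1}{\frac2\zeta}=\frac{\log(N-1)}{\log(N+1)}\cdot\frac12\big(\frac{2-\delta}{\delta}-\zeta\big)$, which is not quite the claimed bound, so I would have to recheck the bookkeeping — likely the decisive comparison is against $|I_{n+1}^P|$ at the specific $n$ in question rather than at the block endpoint, and the other three regimes (where $\mu$ decays at rate $(N-1)^{-1}$ per step, matching $|I_{n+1}^P|\asymp\overline\beta_n^{-n}$) contribute ratios $\to\frac{\log(N-1)}{\log(N+1)}$, i.e. larger, so the liminf is governed by the sub-block where the exponent-to-length ratio is minimized.

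The main obstacle I anticipate is precisely this optimization over $n$ within a block: I must identify, for each $k$, the index $n$ minimizing $\frac{\log\mu(I_n^P(\beta))}{\log|I_{n+1}^P(\beta)|}$, show that its limiting value is at least $\frac{\log(N-1)}{\log(N+1)}\big(\frac{2-\delta}{\delta}-\zeta\big)$, and verify that the minimizer does not occur at a block boundary (where mismatches between $m_{k-1}$-scale corrections and $n_k$-scale main terms could spuriously lower the ratio). Once the four regime-by-regime estimates are in hand, taking $\liminf_{n\to\infty}$ and letting $k\to\infty$ absorbs all the $o(n_k)$ errors (the $\sum_{i<k}(a_i+c_i)$ term, the $\log C$ term, and the difference between $\overline\beta_n$ and $\beta$), and the bound follows. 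Finally I would remark that $\overline\beta_n\le N+1$ gives the $\log(N+1)$ in the denominator and that every $u_i\in\{1,\dots,N-1\}$ branch is genuinely present, so the combinatorial count $(N-1)^{(\cdot)}$ in $\mu$ is exact, not merely an upper bound, which is what makes the liminf — rather than just a limsup — estimate legitimate.
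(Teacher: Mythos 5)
Your architecture is the paper's: work block by block on $m_{k-1}<n\le m_k$, split into the four regimes of the definition of $\mu$, read off the exponent of $(N-1)^{-1}$ in $\mu(I_n^P(\beta))$, and divide by $-\log|I_{n+1}^P(\beta)|$, bounded via the length lower bounds (\ref{l1})--(\ref{l4}) together with $\overline{\beta}_n\le N+1$ and $C_n(\beta)\ge C$. (The upper bound $|I_{n+1}^P|\le\overline{\beta}^{-n}$ from Lemma \ref{S} is not needed for this lemma: to bound $\frac{-\log\mu}{-\log|I_{n+1}^P|}$ from below one only needs the length bounded from \emph{below}.) However, your decisive computation is left unresolved, and the factor of $\tfrac12$ you notice is not a subtlety about which $n$ to compare against --- it is an arithmetic slip in $m_k$. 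From (\ref{1}) and (\ref{2}), $2b_k+c_k\sim\bigl(\tfrac{2}{\zeta}\tfrac{\delta-1}{\delta}+\tfrac{1}{\zeta}\tfrac{2-\delta}{\delta}-2\bigr)n_k=\bigl(\tfrac{1}{\zeta}-2\bigr)n_k$, so $m_k=2n_k+2b_k+c_k-1\sim\tfrac{n_k}{\zeta}$, not $\tfrac{2n_k}{\zeta}$. With this, $\frac{a_k+c_k}{m_k}\to\zeta\bigl(\tfrac{1}{\zeta}\tfrac{2-\delta}{\delta}-1\bigr)=\tfrac{2-\delta}{\delta}-\zeta$, exactly the claimed constant, and no optimization over $n$ inside the fourth regime is needed: there the single estimate (\ref{l4}), $|I_{n+1}^P(\beta)|\ge C\overline{\beta}_n^{-(2n_k+2b_k+c_k)}$, holds for every $n$ in the regime (it comes from the digit $N$ sitting at position $n_k+b_k$ of the repeated block), so the comparison really is against the block-endpoint scale.

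Two further points where your regime-by-regime picture is off. First, the minimum is also attained at the \emph{left} end of the first regime: at $n=m_{k-1}$ the exponent of $(N-1)^{-1}$ is only $\sum_{i<k}(a_i+c_i)\sim\bigl(\tfrac1\zeta\tfrac{2-\delta}{\delta}-1\bigr)n_{k-1}$, while $-\log|I_{n+1}^P(\beta)|\sim m_{k-1}\log(N+1)\sim\tfrac{n_{k-1}}{\zeta}\log(N+1)$, giving again $\frac{\log(N-1)}{\log(N+1)}\bigl(\tfrac{2-\delta}{\delta}-\zeta\bigr)$; so the minimizer does occur at a block boundary, precisely the situation you hoped to rule out. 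Second, the middle regimes do not give ratio $\to\frac{\log(N-1)}{\log(N+1)}$: while $\mu$ stays flat on $n_k<n\le n_k+b_k$ the cylinder keeps shrinking, and the ratio there tends to $\frac{\log(N-1)}{\log(N+1)}\cdot\frac{\zeta\delta}{\zeta\delta+\delta-1}$. This is the genuinely delicate case --- for small $\zeta$ this quantity is small, and it must be compared honestly with the claimed bound $\tfrac{2-\delta}{\delta}-\zeta$ rather than dismissed as ``larger''; your proposal does not engage with it at all.
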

\begin{proof}
In order to get the result, we only need to estimate the value of $\frac{\log\mu\left(I_n^P(\beta)\right)}{\log|I_{n+1}^P(\beta)|}$ when $m_{k-1}\leq n< m_k$, where $m_k=2n_k+2b_k+c_k-1$. This is discussed by four cases.\\
(1) When $m_{k-1}\leq n< n_k$, by (\ref{l1}), then
$$\begin{array}{rcl}
\frac{\log\mu\left(I_n^P(\beta)\right)}{\log|I_{n+1}^P(\beta)|}&\geq& \frac{\left(n-m_{k-1}+\sum\limits_{i=1}^{k-1}(a_i+c_i)\right)\log(N-1)}{-\log C+n\log \overline{\beta}_n}\\&\geq& \frac{\left(n-m_{k-1}+\sum\limits_{i=1}^{k-1}(a_i+c_i)\right)\log(N-1)}{-\log C+n\log (N+1)}\\&\geq& \frac{\left(m_{k-1}-m_{k-1}+\sum\limits_{i=1}^{k-1}(a_i+c_i)\right)\log(N-1)}{-\log C+m_{k-1}\log (N+1)}\\&\rightarrow& \frac{\log(N-1)}{\log(N+1)}\left(\frac{2-\delta}{\delta}-\zeta\right),
\end{array}$$as $k\rightarrow\infty$, and the last inequality is ensured by the increasing of the function $f(n)=\frac{\left(n-m_{k-1}+\sum\limits_{i=1}^{k-1}(a_i+c_i)\right)\log(N-1)}{-\log C+n\log (N+1)}$. \\
(2) When $n_k \leq n < n_k+b_k$, it follows from (\ref{l1}) and (\ref{l2}) that
$$\begin{array}{rcl}
\frac{\log\mu\left(I_n^P(\beta)\right)}{\log|I_{n+1}^P(\beta)|}&\geq& \frac{\left(a_k+\sum\limits_{i=1}^{k-1}(a_i+c_i)\right)\log(N-1)}{-\log C+(n+1)\log \overline{\beta}_n}\\&\geq& \frac{\left(a_k+\sum\limits_{i=1}^{k-1}(a_i+c_i)\right)\log(N-1)}{-\log C+(n_k+b_k+1)\log (N+1)}\\ &\rightarrow& \frac{\log(N-1)}{\log(N+1)}\frac{\zeta\delta}{\zeta\delta+\delta-1}\geq \frac{\log(N-1)}{\log(N+1)} \frac{2-\delta}{\delta},
\end{array}$$as $k\rightarrow\infty$, where the last inequality is guaranteed by the choice of $\zeta$. \\
(3) When $n_k+b_k\leq n<n_k+b_k+c_k$, similarly, by (\ref{l1}), we get that
$$\begin{array}{rcl}
\frac{\log\mu\left(I_n^P(\beta)\right)}{\log|I_{n+1}^P(\beta)|}&\geq& \frac{\left(a_k+n-n_k-b_k+\sum\limits_{i=1}^{k-1}(a_i+c_i)\right)\log(N-1)}{-\log C+n\log \overline{\beta}_n}\\&\geq& \frac{\left(a_k+\sum\limits_{i=1}^{k-1}(a_i+c_i)\right)\log(N-1)}{-\log C+(n_k+b_k)\log (N+1)}\\&\rightarrow& \frac{\log(N-1)}{\log(N+1)}\frac{\zeta\delta}{\zeta\delta+\delta-1}\geq \frac{\log(N-1)}{\log(N+1)} \frac{2-\delta}{\delta},
\end{array}$$as $k\rightarrow\infty.$\\
(4) When $n_k+b_k+c_k\leq n< m_k$, then (\ref{l3}) and (\ref{l4}) give that
$$\begin{array}{rcl}
\frac{\log\mu\left(I_n^P(\beta)\right)}{\log|I_{n+1}^P(\beta)|}&\geq& \frac{\left(\sum\limits_{i=1}^{k}(a_i+c_i)\right)\log(N-1)}{-\log C+(2n_k+2b_k+c_k)\log \overline{\beta}_n}\\&\geq& \frac{\left(\sum\limits_{i=1}^{k}(a_i+c_i)\right)\log(N-1)}{-\log C+(2n_k+2b_k+c_k)\log (N+1)}\\&\rightarrow& \frac{\log(N-1)}{\log(N+1)}\left(\frac{2-\delta}{\delta}-\zeta\right),
\end{array}$$as $k\rightarrow\infty.$\\

From what has been discussed, we have $\liminf\limits_{n\rightarrow\infty}\frac{\log\mu\left(I_n^P(\beta)\right)}{\log|I_{n+1}^P(\beta)|}\geq \frac{\log(N-1)}{\log(N+1)}\left(\frac{2-\delta}{\delta}-\zeta\right).$
\end{proof}
\subsubsection{Measure of Balls}
Now we give an estimation on the measure of arbitrary balls $B(\beta,r)$ with non-empty intersection with $E$ and $r$ is small enough. Before doing that, we first refine the cylinders containing some $\beta \in E$. For any $\beta \in E$ and $n\in\N$, let
$$J_n(\beta)= \left\{
\begin{aligned}
I_n^P(\beta) & , &{\rm when}\ \ m_{k-1}<n \leq n_k\ {\rm \ for\ some\ k\geq 1}; \\
I_{n_k}^P(\beta)& , & {\rm when}\ \ n_k < n\leq n+b_k\ {\rm \ for\ some\ k\geq 1};\\
I_n^P(\beta)& , & {\rm when}\ \ n_k+b_k<n\leq n_k+b_k+c_k\ {\rm \ for\ some\ k\geq 1}; \\
I_{n_k+b_k+c_k}^P(\beta)& , & {\rm when}\ \ n_k+b_k+c_k< n\leq m_k\ {\rm \ for\ some\ k\geq 1}.
\end{aligned}
\right.$$

Let $B(\beta,r)$ be a ball with non-empty intersection with $E$ and $r$ be small enough. Assume that $n$ is an integer satisfies $$|J_{n+1}(\beta)|\leq r<|J_n(\beta)|.$$ Then there exists a $k$ such that $n=m_{k-1}\leq n< m_k$. Since the lengths of $J_n(\beta)$ and $J_{n+1}(\beta)$ are different, we get that
\begin{equation}\label{n}
m_{k-1}=2n_{k-1}+2b_{k-1}+c_{k-1} < n \leq n_k\  {\rm or}\ n_k+b_k<n\leq n_k+b_k+c_k.
\end{equation}

An important point we should notice is that$$\mu(J_n(\beta))=\mu(I_n^P(\beta)),$$for every $n\geq 1$. This means that all basic intervals $J_n$ have the same order are of equal $\mu$-measure. Hence, in order to bound the measure of the ball $B(\beta,r)$ given above, now we estimate the number $\mathcal{N}$ of the basic intervals with non-empty intersection with the ball  $B(\beta,r)$. Note that by (\ref{l1}), for all $n$ satisfying (\ref{n}), we have $|J_n(\beta)|\geq C\overline{\beta}_n^{-n}\geq C (N+1)^{-n}$. Since $r\leq |J_n(\beta)|\leq \overline{\beta}_n^{-n+1}\leq N^{-n+1}$, we obtain that $$\mathcal{N}\leq \frac{2r}{ C (N+1)^{-n}}+2\leq \frac{2N^{-n+1}}{C (N+1)^{-n}}+2\leq C_1N^{-n}(N+1)^n.$$
As a consequence, we have
\begin{equation}\label{m1}
\mu(B(\beta,r))\leq C_1N^{-n}(N+1)^n\mu(I_n^P(\beta)).
\end{equation}

To estimate the lower limit of $\frac{\log \mu(B(x,r))}{\log r}$, the next step is giving a lower bound for $r$. As a matter of fact, when n satisfies $m_{k-1}=2n_{k-1}+2b_{k-1}+c_{k-1} < n \leq n_k\  {\rm or}\ n_k+b_k<n\leq n_k+b_k+c_k-1$, by the construction of $E$, we have
\begin{equation}\label{m2}
r\geq|J_{n+1}(\beta)|\geq C\overline{\beta}_n^{-n-1}\geq C(N+1)^{-n-1}.
\end{equation} When $n=n_k+b_k+c_k-1$, we get
\begin{equation}\label{m3}
r\geq|J_{n+1}(\beta)|\geq C\overline{\beta}_n^{-2n_k-2b_k-c_k}\geq C(N+1)^{-2n_k-2b_k-c_k}.
\end{equation}
Thus, by Lemma \ref{e} and the above inequalities(\ref{m1}) (\ref{m2}) (\ref{m3}), we have
$$\liminf_{r\rightarrow 0}\frac{\log \mu(B(\beta,r))}{\log r}\geq\left(\frac{\log N-\log(N+1)}{\log(N+1)}+ \frac{\log(N-1)}{\log(N+1)}\right)\left(\frac{2-\delta}{\delta}-\zeta\right).$$

Finally, the mass distribution principle(Lemma \ref{mass}) gives that $$\dim_{\rm{H}}E\geq\left(\frac{\log N-\log(N+1)}{\log(N+1)}+ \frac{\log(N-1)}{\log(N+1)}\right)\left(\frac{2-\delta}{\delta}-\zeta\right).$$ Let $\zeta\rightarrow 0$ and then $N\rightarrow \infty$, we obtain that $$\dim_{\rm{H}}E\geq \frac{2-\delta}{\delta}.$$

\end{document}